\def\eps{\varepsilon}
\def\RR{\mathbb{R}}
\newtheorem{thm}{Theorem}[section]
\newtheorem{prop}[thm]{Proposition}
\newtheorem{lemma}[thm]{Lemma}
\newtheorem{rem}[thm]{Remark}
\theoremstyle{definition}
\newtheorem{definition}[thm]{Definition}
\numberwithin{equation}{section}
\newcommand{\R}{\mathbb{R}}
\newcommand{\A}{{\cal A}}
\newcommand{\ur}{\underline{r}}
\newcommand{\ub}{\underline{b}}
\DeclareMathOperator*{\argmin}{arg\,min}
\begin{document}

\title{On a cross-diffusion model for multiple species with nonlocal interaction and size exclusion}
\author{Judith Berendsen\thanks{Institut f\"ur Numerische und Angewandte Mathematik, Westf\"alische Wilhelms-Universit\"at (WWU) M\"unster. Einsteinstr. 62, D 48149 M\"unster, Germany. e-mail: \{judith.berendsen,martin.burger,jan.pietschmann\}@wwu.de  } \and Martin Burger$^\mathrm{*}$ \and Jan-Frederik Pietschmann$^\mathrm{*}$ }
\maketitle

\begin{abstract}
The aim of this paper is to study a PDE model for two diffusing species interacting by local size exclusion and global attraction. This leads to a nonlinear degenerate cross-diffusion system, for which we provide a global existence result. The analysis is motivated by the formulation of the system as a formal gradient flow for an appropriate energy functional consisting of entropic terms as well as quadratic nonlocal terms. Key ingredients are entropy dissipation methods as well as the recently developed boundedness by entropy principle.
Moreover, we investigate phase separation effects inherent in the cross-diffusion model by an analytical and numerical study of minimizers of the energy functional and their asymptotics to a previously studied case as the diffusivity tends to zero. Finally we briefly discuss coarsening dynamics in the system, which can be observed in numerical results and is motivated by rewriting the PDEs as a system of nonlocal Cahn-Hilliard equations.


\end{abstract}

\section{Introduction}
Mathematical models with local repulsion and global attraction received strong attention in the last decades, in particular motivated by applications in biology ranging from bacterial chemotaxis (cf. e.g. \cite{burger2006,calvez2006volume,hillen2009user,painter2002volume}) to macroscopic motion of animal groups (cf. e.g. \cite{boi2000modelling,Burger2007,chen2014minimal,mogilner1999non}) as well as applications in other fields of science (cf. e.g. \cite{stanczy2011evolution,herz2016including}). The macroscopic modelling of the density evolution leads to partial differential equations with nonlinear diffusion and an additional nonlocal term. The majority of such models can be formulated as metric gradient flows for the density $\rho$ with some energy functional consisting of a local and a nonlocal interaction term
\begin{equation}
	E(\rho) = \int_\Omega e(\rho) - \rho (K*\rho) ~dx, 
\end{equation}
where $e$ is a convex functional, $K$ a nonnegative interaction kernel, and $\Omega \subset \RR^N$. Throughout the paper, $\Omega$ may always be unbounded and we will explicitly remark when its boundedness is needed. Various results on the analysis of energy minimizers respectively stationary states (cf. \cite{Balague2013,bedrossian2011global,Burger2008,Burger2013,Burger2014,Canizo2015,Carrillo2016,carrillo2016nonlinear,Choksi2015,Fellner2010,kaib2016stationary,Simione2014,Simione2015}) and the gradient flow dynamics of the form (cf. \cite{bedrossian2011local,bertozzi2009existence,bertozzi2011lp,Burger2008,Carrillo2011,chayes2013aggregation,dyson2010existence,zm2014}) 
\begin{equation}
	\partial_t \rho = \nabla \cdot ( M(\rho) \nabla E'(\rho))
\end{equation}
have been achieved in the last years, which led to a good understanding of such models and phenomena. 

Much less is known however in the case of multi-species systems, which received most attention only recently (cf. e.g. \cite{bakhtacontrol,bruna2012excluded, bruna2012diffusion,Burger2010,Griepentrog2004,painter2009continuous,Schlake2011,simpson2009multi}). With different species, the modelling leads to nonlinear degenerate cross-diffusion systems for the densities of all species, again with some nonlocal terms. The majority of work was concerned with the derivation of models including formal and computational studies, rigorous results are so far mainly available without the nonlocal interaction terms. First rigorous studies of stationary problems (cf. \cite{burger2012nonlinear,Burger2016b,Cicalese2015}) show interesting phase separation phenomena, whose dynamics seems rather unexplored so far. In this paper we hence study a nonlocal cross-diffusion model for two species called red (density $r$) and blue (density $b$) for simplicity, which can be derived from a lattice-based microscopic model with size exclusion (cf. \cite{Burger2010,Schlake2011}):  
\begin{align}
	\partial_t r &= \nabla \cdot( \eps (1-\rho) \nabla r + \eps r \nabla \rho + r(1-\rho)\left[ \nabla (c_{11}K\ast r - K\ast b) + \nabla V\right]), \label{eq:crossdiffusion1}\\
	\partial_t b &= \nabla \cdot( D \eps (1-\rho) \nabla b + D \eps b \nabla \rho + D b(1-\rho)\left[\nabla (c_{22}K\ast b - K\ast r) + \nabla V\right]),\label{eq:crossdiffusion2} 
\end{align}
either on the whole space or in a bounded domain supplemented with no-flux boundary conditions. The positive parameter $\eps$ regulates the strength of the diffusion relative to the nonlocal convection terms.
Here $K$ is again the interaction kernel, and the constants $c_{ii}< 0 $, $i=1,2$ measure the strength of self-interaction, while the strength of the cross-interaction is scaled to unity (in accordance with the notation of \cite{Cicalese2015}). The time scaling is chosen such that $r$ has a unit diffusion coefficient, and $D$ is the potentially different diffusion coefficient of $b$. The function $\rho$ is the nonnegative total density
\begin{equation}
	\rho = r+b,
\end{equation}
naturally bounded from above by one. 
This system (for $V \equiv 0$) is a gradient flow for the energy functional
\begin{align}\label{eq:Feps}
 F^\eps (r,\,b)=\eps F^E(r,b) + F^0(r,b).
\end{align}
consisting of the nonlocal interaction
\begin{align}\label{eq:F0}
 F^0 (r,\,b)=\int_{\Omega} c_{11}r(K*r) -r(K*b)-b(K*r)+c_{22}b(K*b)\;dx 
\end{align}
and the entropy term
\begin{align}\label{eq:FE}
 F^E (r,\,b)=\int_{\Omega} r\log r + b\log b + (1-\rho)\log (1-\rho)\;dx.
\end{align}
For $\Omega$ unbounded we need a confining potential $V$ and the energy is modified to
\begin{align}\label{eq:FC}
 E^\eps (r,\,b)=F^\eps (r,\,b)+F^C (r,\,b), \qquad F^C (r,\,b)=\int_{\Omega} (r+b)V\;dx.
\end{align}
We shall from now on always use the letter $F$ to denote the energy without confining potential and $E$ if the potential is present.
This energy is to be considered on the set of bounded densities with given mass
\begin{align}\label{eq:A}
&\A=\{(r,\,b) \in L^1(\Omega, \mathbb{R}^+)^2: \\\nonumber
&\int_{\mathbb{R}^N}r\;dx=m_r,\int_{\mathbb{R}^N}b\;dx=m_b, \rho=r+b \leq 1 \text{ for a.e. }x\in \Omega  \},
\end{align}
which can be shown to be invariant under the dynamics of \eqref{eq:crossdiffusion1}, \eqref{eq:crossdiffusion2}.
The special case of minimizing $F^0$ on $\A$ was recently investigated by Cicalese et. al. \cite{Cicalese2015} and it appears obvious that $F^\eps$ is the natural entropic version and \eqref{eq:crossdiffusion1}, \eqref{eq:crossdiffusion2} the natural dynamic model leading to such a minimization problem in the large time asymptotics.
\subsection{Connection to Cahn-Hilliard}
In order to see the inherent phase separation in \eqref{eq:crossdiffusion1}, \eqref{eq:crossdiffusion2} we rewrite it as a system of nonlocal Cahn-Hilliard equations. For this sake let $K$ be nonnegative and integrable with 
\begin{equation}\label{eq:k}
	k = \int_{\R^N} K(x)~dx, 
\end{equation}
which allows to define the nonlocal Laplacian $\Delta_K$ as a negative semidefinite operator via
\begin{equation}
	- \Delta_K u = u - \frac{1}k K \ast u.
\end{equation}
Then  \eqref{eq:crossdiffusion1}, \eqref{eq:crossdiffusion2} becomes
\begin{align}
&	\partial_t r = \nabla \cdot ( k r (1- \rho) \nabla( c_{11} \Delta_K r - \Delta_K b + \partial_r W(r,b))) \\
&	\partial_t b = \nabla \cdot ( D k b (1- \rho) \nabla( c_{22} \Delta_K b - \Delta_K r + \partial_b W(r,b)))
\end{align}
with the multi-well potential
\begin{equation} \label{eq:Wdefinition}
	W(r,b) = \eps ( r \log r + b \log b + (1-\rho) \log(1-\rho)) + \frac{c_{11}}2 r^2 - r b + \frac{c_{22}}2 b^2 - \frac{c_{11}}2 r - \frac{c_{22}}2 b.
\end{equation}
Note that for $\eps=0$ the potential has three global minimizers $(r,b)$ in the corners of the unit triangle, i.e. $(0,1)$, $(1,0)$ and $(0,0)$. The first two correspond to segregated states and the third one to void, hence those are the main structures to be expected as energy minimizers and in the long time asympotics. For small $\eps > 0$ the minima are shifted to the interior of the unit triangle and we thus expect less pronounced segregation. For large $\eps$ the energy becomes convex and hence we expect mixing instead of segregation. As in other Cahn-Hilliard equations we expect dynamics at two time scales:  a short one where clusters - in this case separated in red and blue - appear, and a large one where coarsening dynamics of the clusters appear. In the single species case of the model  \eqref{eq:crossdiffusion1}, \eqref{eq:crossdiffusion2}, e.g. obtained for $b \equiv 0$, the coarsening dynamics has been discussed in detail in \cite{burger2008asymptotic,dolak2005keller}, coarsening rates for a related nonlocal model were derived in \cite{slepcev2008coarsening}. Related work for systems of local Cahn-Hilliard equations (partly with constant mobility) was carried out in \cite{bronsard1998multi,elliott1997diffusional,garcke1998anisotropic,garcke2000singular,Griepentrog2004}, supplemented by numerical simulations in \cite{barrett2001fully,nurnberg2009numerical}.
\subsection{Organization of the paper}
The aim of this work is to analyse the entropic regularization, with parameter $\eps$, of the functional $F^0$. We takle this problem from two different perspectives: First we consider the regularized energy from a variational point of view. Second, we study the system of non-local cross-diffusion equations which naturally occur as the gradient flow with respect to this energy.\\
To this end, we will start by discussing the energy minimization of $F^\eps$ respectively $E^\eps$, we briefly comment on the results in \cite{Cicalese2015} and minor extensions, moreover we verify the $\Gamma$-convergence of $E^\eps$ to $E^0$. We also prove the existence of minimizers of $E^\eps$ and their convergence. This is supplemented by a numerical study of the variational problem based on energy minimization via splitting methods.  We further show the existence of weak solutions for the transient model \eqref{eq:crossdiffusion1}, \eqref{eq:crossdiffusion2}, extending the results of \cite{Burger2010} to nonlocal interactions and simplifying the line of the proof via appropriate time discretization with regularizing terms in primal and dual (entropy) variables. Moreover, we provide a regularity result in the case of equal diffusivities and verify consistency of the stationary problem with the energy minimization. Finally we study the dynamics in the particularly relevant case of $\eps \rightarrow 0$ by formal and numerical methods. In the single species case metastable coarsening dynamics of clusters are already studied in detail (cf. \cite{dolak2005keller,burger2008asymptotic}) and we obtain similar behaviour in the multi-species setting.

\section{Minimizers of the Energy Functionals}\label{sec:min}
We start by stating assumptions on the interaction kernel $K$ and the potential $V$. Furthermore, we define
\begin{definition}[Coulomb kernel]\label{def:coulomb} The Coulomb kernel on $\RR^N$ is given by
\begin{align*}
 K(x) := \left\{\begin{array}{ll}
                 -\frac{1}{2}|x| & \text{for } N=1,\\
                 -\frac{1}{2\pi}\log |x| & \text{for } N=2,\\
                 \frac{1}{(N-2)\omega_N}\frac{1}{|x|^{N-2}} & \text{for } N\ge 3,
                \end{array}\right.
\end{align*}
where $\omega_N$ is the $N$-dimensional measure of the unit ball.
\end{definition}
While the Coulomb kernel (or Newton-Potential) is a prominent example for our model, we can extend our arguments to a more general class.
\begin{definition}[Admissible Kernel]\label{def:admissible} For $\Omega\subset \RR^N$, possibly unbounded, we say that a kernel $K$ is admissible if the following conditions are satisfied
\begin{itemize}
 \item[\textbf{(K1)}] $K \in W^{1,1}_{loc}(\Omega)$.
 \item[\textbf{(K2)}] K is radially symmetric, i.e. $K(x)=k(|x|)$ and $k$ is non-increasing.
 \item[\textbf{(K3)}] As $x\to 0$ and $x\to\infty$, $K$ behaves at most as singular as the Coulomb kernel.
\end{itemize}
\end{definition}
Since $V$ should act as a confining potential, we impose the following growth condition at infinity.
\begin{flushleft}
 \textbf{(V1)} Let $V\in W^{1,\infty}_{loc}(\RR^N)$ be nonnegative almost everywhere and such that 
 $$
 \liminf_{|x|\to\infty} \frac{V(x)}{|x|^2}>0.
 $$
\end{flushleft}
%
\subsection{Existence of Minimizers}
Let us first summarize the results of Cicalese et. al. concerning the existence of a minimizer of solely the functional $F^0$ on the whole space $\RR^N$.
\begin{thm}[\cite{Cicalese2015}]\label{thm:novaga}
Let $c_{11}, \, c_{22} \le 0$ and $K \in L_{loc}^1(\RR^N)$ and $\A$ as defined in \eqref{eq:A}. Then there exists a minimizer $(r^0,\,b^0) \in \mathcal{A}$ to the functional
\begin{align*}
F^0 (r,\,b)=\int_{\Omega} c_{11}r(K*r) -r(K*b)-b(K*r)+c_{22}b(K*b)\,dx. 
\end{align*}
More precisely let $(r_n, \,b_n)$ be a minimizing sequence. Then there is a unique sequence of translations $(\tau_n) \subset \mathbb{R}^N$ such that up to a subsequence
$$(r_n(\cdot - \tau_n),b_n(\cdot - \tau_n)) \to (r^0,\,b^0)$$
as $n \to \infty$.
\end{thm}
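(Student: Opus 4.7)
My plan is to use the direct method of the calculus of variations, combined with a concentration-compactness argument to cope with the unboundedness of $\RR^N$.

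First, I would show that $\inf_\A F^0$ is finite and strictly negative. Finiteness follows from a Hardy--Littlewood--Sobolev type bound (or, for the Coulomb kernel in low dimensions, a direct rearrangement estimate) applied to each of the four bilinear forms $\int r(K\ast s)\,dx$, using the uniform $L^1\cap L^\infty$ control built into the definition of $\A$ (since $r,b\in[0,1]$ with prescribed masses). Strict negativity is obtained by testing $F^0$ on compactly supported admissible configurations, exploiting $c_{11},c_{22}\le 0$ together with the positivity of $K$ near the origin.

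Second, I would take a minimizing sequence $(r_n,b_n)\subset\A$. The pointwise bounds yield a subsequence with $r_n\weak r^*$ and $b_n\weak b^*$ weakly-$\ast$ in $L^\infty(\RR^N)$; non-negativity and $r^*+b^*\le 1$ pass to the limit, but mass may escape to infinity. To recover the mass constraints I would apply Lions' concentration-compactness lemma to $\rho_n=r_n+b_n$. Vanishing is excluded because it would force $K\ast r_n,\,K\ast b_n\to 0$ in $L^p_{loc}$ and hence $F^0(r_n,b_n)\to 0$, contradicting $\inf_\A F^0<0$. Dichotomy is ruled out by a strict sub-additivity estimate: if the mass splits into two clusters separated by a distance $d_n\to\infty$, the cross-interactions decay to zero while each cluster is itself admissible, so bringing the clusters back together (admissible because the density constraint $\rho\le 1$ still leaves room when clusters are placed adjacent rather than overlapping) strictly decreases the energy below the sum of the partial infima. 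Hence there exist translations $\tau_n$ such that $\rho_n(\cdot-\tau_n)$, and therefore both $r_n(\cdot-\tau_n)$ and $b_n(\cdot-\tau_n)$, are tight.

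Third, after replacing $(r_n,b_n)$ by their translates, tightness combined with the $L^\infty$ bound upgrades weak-$\ast$ convergence to strong $L^p_{loc}$-convergence for every $p<\infty$, and by continuity of convolution with $K$ one has $K\ast r_n\to K\ast r^*$ and $K\ast b_n\to K\ast b^*$ in $L^p_{loc}$. The quadratic expression defining $F^0$ then passes to the limit, the limit lies in $\A$ (the mass constraints surviving by tightness and pointwise constraints by weak-$\ast$ lower semicontinuity), and therefore $(r^*,b^*)$ is a minimizer. Uniqueness of the translation sequence up to subsequence follows from a standard recentering argument once the limit is identified. The main obstacle I anticipate is the dichotomy step, which requires exploiting both the decay of $K$ at infinity and the coupled structure of the bilinear form to obtain strict sub-additivity of the infimum; the Coulomb singularity at the origin also needs some care but is readily absorbed into the Hardy--Littlewood--Sobolev type estimates used in the first step.
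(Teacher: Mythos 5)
This theorem is imported from \cite{Cicalese2015}: the paper gives no proof of it, remarking only that the original argument ``uses Lions' concentration compactness principle'' (in contrast to the Dunford--Pettis route taken for Theorem \ref{thm:existence}, where the confining potential replaces recentering by translations). Your sketch therefore follows the strategy of the cited source rather than anything proved in this paper, and in outline it is the right one: rule out vanishing and dichotomy for $\rho_n$, recenter by translations, and pass to the limit in the bilinear form.

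Two steps need repair. First, tightness plus a uniform $L^\infty$ bound does \emph{not} upgrade weak-$\ast$ convergence of $r_n$ to strong $L^p_{loc}$ convergence: oscillating sequences such as $\frac{1}{2}(1+\sin(nx_1))\chi_{B_1}$ are tight, bounded by $1$, and converge only weakly. What saves the argument is that you never need strong convergence of $r_n$ itself. Since $K\in L^1_{loc}$ and $r_n,b_n$ are uniformly bounded in $L^1\cap L^\infty$, the convolutions $K\ast r_n$ and $K\ast b_n$ converge strongly in $L^1_{loc}$ (split $K$ into a near part lying in $L^1$ and a far part controlled by the fixed masses), and each term $\int r_n(K\ast b_n)\,dx$ then converges by weak-$\ast$/strong pairing. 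Second, your sign analysis is tied to $N\ge 3$, where the Coulomb kernel is positive and decays at infinity. For $N=1$ (and similarly $N=2$) the kernel $-\frac{1}{2}|x|$ is nonpositive and grows at infinity, so with $c_{11},c_{22}\le 0$ every term of $F^0$ is nonnegative, the infimum is not strictly negative, and ``cross-interactions between distant clusters decay to zero'' is false; there, spreading or splitting of mass is penalized because $|x|\ast r_n$ blows up, not because the energy tends to $0$ from below. The concentration-compactness scheme still goes through, but the exclusion of vanishing and the strict subadditivity inequality must be argued separately in each dimensional regime; as written, your proof only covers $N\ge 3$.
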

\begin{rem} Note that due to the translation invariance of $F^0$, the results of \cite{Cicalese2015} allow for arbitrary shifts of the minimizers. Adding a confining potential, i.e. replacing $F^0$ by $E^0 = F^0 + F^C$, however, does not change their result and removes this technicality.
\end{rem}
Next, we extend this existence result to the functionals $F^{\eps}$ and $E^{\eps}$, respectively. 
%
\begin{thm}{(Existence of Minimizer)} \label{thm:existence}
Let $c_{11}, \, c_{22} \le 0$ and $K$ be admissible in the sense of definition \ref{def:admissible}. Then there exists at least one minimizer $(\ur,\,\ub) \in \mathcal{A}$ to the functional
\begin{align*}
E^\eps (r,b)&= \eps F^E(r,b) + F^0(r,b) + F^C(r,b).
\end{align*} 
More precisely any minimizing sequence $(r_n, \,b_n)$ converges $L^1(\Omega)-$weakly to a minimizer $(\ur,\,\ub)$ of $E^\eps$ as $n \to \infty$.\\ If, in addition, $\Omega$ is bounded, the same result holds for the functional $F^\eps$, i.e. the energy without confining potential.
\end{thm}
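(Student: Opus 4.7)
The strategy is a direct method of the calculus of variations: take a minimizing sequence $(r_n,b_n)\in\A$ for $E^\eps$, extract an $L^1$-weakly convergent subsequence with limit $(\ur,\ub)$, show that the admissible set $\A$ is closed under this convergence, and finally verify weak lower semicontinuity of the three contributions $F^E$, $F^0$, $F^C$ separately.

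The first task is to produce a lower bound on $E^\eps$ so that the infimum is meaningful. The entropy $F^E$ is bounded below because $r,b,1-\rho\in[0,1]$ and $s\mapsto s\log s\ge -1/e$; on unbounded $\Omega$ we convert this into an $L^1$ bound via the mass constraint. The interaction $F^0$ can be controlled from below using $K$ admissible together with the $L^1\cap L^\infty$ bounds coming from $0\le r,b\le 1$ and $\int r=m_r$, $\int b=m_b$ (Young's inequality, with the Coulomb growth of $K$ absorbed against $L^1\cap L^\infty$ functions). Finally $F^C\ge0$. With the energy bounded, I would next extract a weakly convergent subsequence: on a bounded $\Omega$ the $L^\infty$ bound $\rho\le 1$ and Dunford-Pettis give this immediately; on unbounded $\Omega$ the confining potential provides tightness because $F^C(r_n,b_n)$ bounded combined with (V1), which forces $V(x)\gtrsim |x|^2$ at infinity, yields $\int_{|x|>R}(r_n+b_n)\,dx\le C/R^2$, hence no mass escapes to infinity.

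Closure of $\A$ under weak $L^1$ convergence is straightforward: nonnegativity and the mass constraints pass to the limit because they are given by linear functionals, and the pointwise bound $r_n+b_n\le 1$ passes to the limit by Mazur's lemma (or directly since the set $\{\rho\le 1\}$ is convex and strongly closed in $L^1$). For lower semicontinuity, the entropy term $F^E$ is $\int_\Omega e(r,b)\,dx$ with $e(r,b)=r\log r+b\log b+(1-r-b)\log(1-r-b)$ convex and bounded below, so classical results give weak $L^1$ lower semicontinuity. The confining term $F^C$ is linear in $(r,b)$ with nonnegative $V\in W^{1,\infty}_{loc}$, hence lsc by Fatou after truncation.

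The main obstacle will be the nonlocal term $F^0$, since the admissible kernel $K$ is singular at the origin (Coulombic) and, in low dimensions, not even integrable at infinity. My plan is to show that $(r,b)\mapsto F^0(r,b)$ is in fact weakly continuous on $\A$: write $F^0$ as a sum of four bilinear forms $B_K(u,v)=\int u(K\ast v)\,dx$. For a fixed test function $u\in L^\infty\cap L^1$, the map $v\mapsto K\ast v$ maps bounded sets of $L^1\cap L^\infty(\Omega)$ compactly into $L^1_{loc}(\Omega)$: admissibility (K1)--(K3) allows splitting $K=K^{\text{sing}}+K^{\text{reg}}$ with $K^{\text{sing}}$ supported near the origin of Coulomb type and $K^{\text{reg}}$ smooth away from zero. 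The singular part is handled by $L^p$ convolution estimates (Hardy-Littlewood-Sobolev) using the uniform $L^\infty$ bound on $b_n$, while the regular part yields equicontinuity via (K2) and Ascoli. Combining strong $L^p_{loc}$ convergence of $K\ast b_n$ with the weak $L^1$ convergence of $r_n$ (and using tightness on unbounded $\Omega$ to handle the tails where $r_n$ is small) allows passage to the limit in each bilinear form. Hence $F^0(r_n,b_n)\to F^0(\ur,\ub)$, and $(\ur,\ub)$ is a minimizer. The bounded $\Omega$ case without $F^C$ is then immediate since tightness is automatic and the same semicontinuity arguments apply to $F^\eps$.
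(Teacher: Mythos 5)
Your overall architecture coincides with the paper's: direct method, Dunford--Pettis for weak $L^1$ compactness with tightness supplied by the confining potential, convexity of the local terms for weak lower semicontinuity, and a separate treatment of the nonlocal term. (Your claim that $F^0$ is weakly \emph{continuous} via compactness of $v\mapsto K\ast v$ is stronger than what the paper uses -- it only invokes weak lower semicontinuity of $F^0$ from \cite{Choksi2015} -- but that part of your plan is workable.) However, there is one genuine gap in the step that makes the infimum finite on unbounded $\Omega$.

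You assert that $F^E$ is bounded below because $s\log s\ge -1/e$ and that "the mass constraint" converts this into an $L^1$ bound. On an unbounded domain this fails: taking $r=\delta\chi_A$ with $|A|=m_r/\delta$ gives $\int_\Omega r\log r\,dx=m_r\log\delta\to-\infty$ as $\delta\to 0$, so the entropy is \emph{not} bounded below on $\A$ by the mass and $L^\infty$ constraints alone. The paper's fix is to couple the entropy with half of the confining term and apply Jensen's inequality relative to the finite measure $e^{-V/(2\eps)}\,dx$ (finite by \textbf{(V1)}), which yields $\int_\Omega \eps r\log r + \tfrac12 rV\,dx\ \ge\ \eps m_r\log\bigl(m_r/\int_\Omega e^{-V/(2\eps)}\,dx\bigr)$, with the remaining $\tfrac12\int rV$ kept in reserve to produce the second moment bound. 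This matters downstream: your tightness argument needs a uniform bound on $F^C(r_n,b_n)$ along the minimizing sequence, which you can only extract after the entropy (and $F^0$) have been bounded below independently of $F^C$ -- so the gap propagates into your compactness step. Relatedly, your lower bound for $F^0$ via "Young's inequality absorbing the Coulomb growth against $L^1\cap L^\infty$" is fine for $N\ge 3$ but does not cover $N=1,2$, where the Coulomb kernel is unbounded at infinity; the paper uses the sign of the kernel for $N=1$ and the logarithmic Hardy--Littlewood--Sobolev inequality (again together with the potential) for $N=2$. With these two repairs the rest of your argument goes through as in the paper.
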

 While the proof of Theorem \ref{thm:novaga} uses Lions' concentration compactness principle, our approach is based on Dunford-Pettis, making use of the confining potential when $\Omega$ is unbounded. 
\begin{thm}{(Dunford-Pettis theorem)}\label{lem:concentration}
Let $\mathcal{F}$ be a bounded set in $L^1(\Omega).$ Then $\mathcal{F}$ has
compact closure in the weak topology $\sigma(L^1(\Omega), L^{\infty}(\Omega))$ if and only if $\mathcal{F}$ is equi-integrable,
that is,
\begin{align*}
(i) &\quad \forall\, \eps > 0 \; \exists\, \delta > 0 \text{ such that } \int_{U} f \,dx  < \eps\; \forall U \subset \Omega, \text{ measurable with } |U| < \delta, \forall f\in \mathcal{F}   \\
(ii) &\quad \forall\, \eps > 0 \; \exists\, \omega \subset \Omega \text{ measurable with } |\omega|<\infty \text{ such that } \int_{\Omega \setminus \omega} f \, dx < \eps, \forall f\in \mathcal{F}.  
\end{align*}
\end{thm}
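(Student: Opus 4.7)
The plan is to establish both implications separately, using three standard tools: the Eberlein--\v{S}mulian theorem (reducing weak compactness to sequential weak compactness in this setting), the duality $L^1(\Omega)^* = L^\infty(\Omega)$, and the de la Vall\'ee--Poussin characterization of equi-integrability. Necessity is short; sufficiency is the real work.

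For necessity I would argue by contraposition. Suppose condition (i) fails: there exist $\eps_0 > 0$, functions $f_n \in \mathcal{F}$, and measurable sets $U_n$ with $|U_n| \to 0$ while $\int_{U_n} f_n \, dx \ge \eps_0$. Any candidate weak limit $f \in L^1(\Omega)$ of a subsequence of $(f_n)$ must satisfy $\int_{U_n} f \, dx \to 0$ by absolute continuity of the Lebesgue integral applied to the fixed $L^1$ function $f$. Testing $(f_n - f)$ against $\chi_{U_n} \in L^\infty(\Omega)$ then contradicts weak convergence along any subsequence, so $\mathcal{F}$ cannot be relatively weakly compact. The failure of (ii) is handled analogously, using complements of an exhaustion by finite-measure subsets in place of shrinking sets.

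For sufficiency, given $(f_n) \subset \mathcal{F}$ I would construct a weakly convergent subsequence; by Eberlein--\v{S}mulian this suffices. Condition (ii) yields an exhaustion $\omega_1 \subset \omega_2 \subset \cdots \subset \Omega$ with $|\omega_k| < \infty$ and $\int_{\Omega \setminus \omega_k} f_n \, dx < 1/k$ uniformly in $n$. On each fixed $\omega_k$, condition (i) together with the $L^1$-bound on $\mathcal{F}$ gives, via de la Vall\'ee--Poussin, a convex superlinear $\Phi$ with $\Phi(t)/t \to \infty$ and $\sup_n \int_{\omega_k} \Phi(f_n) \, dx < \infty$. Reflexivity of the Orlicz space $L^\Phi(\omega_k)$ then delivers a weakly convergent subsequence in $L^1(\omega_k)$ with some limit $f^{(k)}$. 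A Cantor diagonal extraction produces a single subsequence $(f_{n_j})$ converging weakly on every $\omega_k$; consistency of the $f^{(k)}$ on overlaps patches them into a single $f \in L^1(\Omega)$, with $\|f\|_{L^1} \le \liminf \|f_{n_j}\|_{L^1}$ by Fatou.

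To conclude $f_{n_j} \rightharpoonup f$ in $L^1(\Omega)$, I would pair $(f_{n_j} - f)$ with arbitrary $g \in L^\infty(\Omega)$ and split the integral over $\omega_k$ and its complement. The complement contribution is bounded by $2\|g\|_{L^\infty}/k$ using tightness, uniformly in $j$, while the piece over $\omega_k$ vanishes by weak $L^1(\omega_k)$ convergence. Sending first $j \to \infty$ and then $k \to \infty$ yields the claim. The main obstacle is the sufficiency direction, specifically the de la Vall\'ee--Poussin step: without upgrading equi-integrability to a uniform Orlicz bound, there is no reflexive space in which to extract a weakly convergent subsequence on $\omega_k$, and the whole scheme collapses.
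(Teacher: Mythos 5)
The paper offers no proof of this statement: it is quoted as the classical Dunford--Pettis theorem (cf.\ Brezis, \emph{Functional Analysis}, Thm.~4.30) and used as a black box in the proof of Theorem~\ref{thm:existence}. So the only question is whether your argument stands on its own, and I see a genuine gap in each direction.

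In the necessity direction you test $f_{n}-f$ against $\chi_{U_{n}}$, but weak convergence $f_{n_j}\rightharpoonup f$ only gives $\int_\Omega (f_{n_j}-f)g\,dx\to 0$ for a \emph{fixed} $g\in L^\infty(\Omega)$; it says nothing about pairings with a sequence $g_j=\chi_{U_{n_j}}$ that varies with $j$ (compare $f_n=1+\sin(nx)\rightharpoonup 1$ on $(0,2\pi)$, yet $\int(f_n-1)\sin(nx)\,dx=\pi$ for all $n$). The assertion you actually need --- that a weakly convergent sequence in $L^1$ is automatically equi-integrable --- is precisely the nontrivial content of this direction, and requires either the Vitali--Hahn--Saks/Nikod\'ym theorem applied to the measures $E\mapsto\int_E f_n\,dx$, or a gliding-hump construction assembling a single $L^\infty$ test function from disjoint pieces of the $U_n$. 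In the sufficiency direction, the de la Vall\'ee--Poussin function $\Phi$ need not generate a \emph{reflexive} Orlicz space: reflexivity of $L^\Phi$ requires both $\Phi$ and its conjugate to satisfy the $\Delta_2$ condition, and the latter forces $\Phi(t)\geq c\,t^{1+\sigma}$ for some $\sigma>0$, whereas an equi-integrable family (even a single $f\in L^1(\omega_k)$ lying in no $L^p(\omega_k)$ with $p>1$) may admit only an arbitrarily slowly superlinear $\Phi$, e.g.\ $t\log(1+t)$, whose space $L\log L$ is not reflexive. So the extraction step on $\omega_k$ collapses exactly at the point you yourself flagged as the main obstacle. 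The standard repair is truncation rather than Orlicz duality: $T_m f_n:=\min(f_n,m)$ is bounded in $L^2(\omega_k)$ (genuinely reflexive, since $|\omega_k|<\infty$), condition (i) gives $\sup_n\|f_n-T_m f_n\|_{L^1(\omega_k)}\to 0$ as $m\to\infty$, and a $3\eps$-argument combined with the weak sequential completeness of $L^1$ then produces the weak limit; your diagonal extraction over $k$ and the final splitting over $\omega_k$ and its complement are fine once this is in place.
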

\begin{rem} We carry out the details of the proof for $K$ being the Coulomb kernel as in Definition \ref{def:coulomb}, only. 
Due to the properties of admissible kernels, the arguments will not change for general $K$.
\end{rem}
\begin{proof}[Proof of Theorem \ref{thm:existence}]
We follow the direct method of calculus of variations. First note that there exist functions $(r^*,b^*)\in \A$ such that $E^\eps(r^*,b^*) < \infty$, take for example
\begin{align*}
 r^* = \frac{c_1e^{-|x|^2}}{1+c_1e^{-|x|^2} + c_2e^{-|x|^2}},\quad b^* = \frac{c_2e^{-|x|^2}}{1+c_1e^{-|x|^2} + c_2e^{-|x|^2}},
\end{align*}
where the constants $c_1,c_2$ are chosen to fix the appropriate masses. 
Next we show that $E^\eps$ is bounded from below on $\A$. We focus on $\eps F^{E}+F^C$ first using a relative entropy argument. 
\begin{align*}
 \int_\Omega \left( \eps r\log r + rV\right) 
 &= \int_\Omega \eps r\log\left(\frac{r}{e^{-\frac{V}{2\eps}}}\right) +r \frac{V}{2}\; dx\\
 &= \int_\Omega \eps \frac{r}{e^{-\frac{V}{2\eps}}}\log\left(\frac{r}{e^{-\frac{V}{2\eps}}}\right)e^{-\frac{V}{2\eps}}+r \frac{V}{2}\;dx\\
 &= \int_\Omega e^{-\frac{V}{2\eps}}\;dy \int_\Omega \eps \frac{r}{e^{-\frac{V}{2\eps}}}\log\left(\frac{r}{e^{-\frac{V}{2\eps}}}\right)\frac{e^{-\frac{V}{2\eps}}}{\int_\Omega e^{-\frac{V}{2\eps}}\;dy}+ r \frac{V}{2}\;dx\\ 
 &\ge  \eps  m_r\log(\frac{m_r}{\int_{\Omega} e^{-\frac{V}{2\eps}}\,dx}),
\end{align*}
where we applied Jensen's inequality to the convex function $r\log r$ and used the integrability of $e^{-V}.$ Furthermore we used that $\int_\Omega r V \;dx \ge 0,$ because $V$ is assumed to be nonnegative and $r\ge 0$ almost everywhere. The same argument holds for $b\log b$. In addition we have
\begin{align}
 \int_\Omega \eps (1-\rho)\log (1-\rho)\;dx &= \int_\Omega \eps ((1-\rho)\log (1-\rho) - (1-\rho) + 1 - \rho)\;dx \\\nonumber
 &\ge -\eps(m_r+ m_b),
\end{align}
where we used that the function $z \log z - z + 1$ is non-negative on $[0,1]$.
%
%
To show that $F^0$ is also bounded from below note that for $N=1$, we know that $F^0$ is larger than zero since $r,\, b$ and the absolute value are positive functions. For $N=2$ we apply the logarithmic Hardy-Littlewood-Sobolev inequality \cite[lemma 5]{bedrossian2011local} using again the potential to bound the logarithmic terms. Finally, for $N \geq 3$ we use the Hardy-Littlewood-Sobolev type inequality stated in \cite[lemma 4]{bedrossian2011local} with $t=1$ and $p=q=2$. Thus, we are allowed to take minimizing sequence $(r_n,b_n)$ in $\A$ which, for $n$ large enough, is always smaller than or equal to $(r^*,b^*)$. This implies
\begin{align*}
 -\infty &< \int_\Omega \eps( r_n\log r_n + b_n\log b_n + (1-\rho_n)\log(1-\rho_n) + \frac{1}{2\eps}(r_n+b_n)V) \;dx + F^0(r_n,b_n) \\
 &+ \int_\Omega \frac{1}{2}(r_n+b_n)V\;dx  \le E^\eps(r^*,b^*) < \infty,
\end{align*}
and since $\int_\Omega r V \;dx \ge 0$, we also conclude the boundedness of $\int_\Omega (r_n+b_n)V\;dx$ which, by assumption \textbf{(V1)} implies a second moment bound on $r_n$ and $b_n$.\\
We are now in a position to apply Theorem \ref{lem:concentration} to conclude the existence of a weakly converging subsequence. We apply the theorem to $\mathcal{F}$ being the set $\A$ intersected with the set of all functions having bounded second moment, which is a bounded subset of $L^1(\Omega).$ In order to show condition (i) we exploit the fact that every element in $\A$ is bounded from above by one i.e. we can take $\eps = \delta.$. For the second condition we employ the second moment bounds. Consider a ball $B_R$ with radius $R$ in $\Omega.$ We obtain
\begin{align*}
 \int_{\Omega\setminus B_R} r_n \,dx = \int_{\Omega\setminus B_R} \frac{|x|^2}{|x|^2} r \,dx \leq \frac{C}{R^2} < \eps,
\end{align*}
for $R$ sufficiently large.
That means the minimizing sequence $(r_n,\, b_n)$ must have a subsequence $(r_{n_j},\, b_{n_j})_j$ weakly converging to some limit $(\ur,\ub)$ in $L^1(\Omega).$ The functional $F^0$ is weakly lower semicontinuous this is shown in \cite[Theorem 2.1]{Choksi2015} for $N=1$ and $N=2$ and in Lemma 3.3 in \cite{Choksi2015} for $N\geq 3$. Since $F^\eps$ is convex, this implies the lower semicontinuity of $E^\eps$ which allows us to conclude that $(\ur,\ub)$ are indeed minimizers of $E^\eps$ which also implies that they are contained in $\A$.\\
%
%
For $\Omega$ bounded, the boundedness from below for the entropic terms $F^\eps$ is trivial using the fact that $z\log z \in L^\infty(\Omega)$ for all $z\in [0,1]$, while the arguments for $F^0$ remain unchanged. Since the second moment bound is also a trivial consequence of the $L^\infty$ bounds in $\A$, all remaining arguments are also still valid.
\end{proof}

\subsection{Structure of Energy Minimizers}

The energy minimizers of $F^0$ on $\A$ could be characterized explicitly in some important cases in \cite{Cicalese2015} using radial symmetrization techniques. In particular they showed that there is no phase separation in the  case of cross-attraction being larger than the self-attractions, i.e. $c_{11}  > -1$ and $c_{22} > -1$. On the other hand they obtain a strong phase separation result if $c_{11} + c_{22} < -2$ and one of the self-attractions is weak ($c_{ii} > -1$). In this case the energy minimizer consists of the strongly attracting species having density one in a ball and the other one having density in a spherical shell around it. The case of strong self-attraction of both species, $c_{11}<-1$, $c_{22} < -1$ was left open in multiple dimension however, noticing that already in dimension one the minimizer is not spherically symmetric. We supplement these results by a generic phase separation result, whose proof closely follows a related result in \cite{Burger2016b} as well as numerical simulations for $N=2$ in Section \ref{sec:nummin}:

\begin{prop} 
Let $(\underline{r},\underline{b}) \in \A$ be a minimizer of $F^0$ on this admissible set, for $K$ being a radially strictly decreasing kernel and $c_{11} + c_{22} < -2$. 
Then the intersection of the supports of $\underline{r}$ and $\underline{b}$ on $\Omega \subset \RR^N$ possibly unbounded has zero Lebesgue measure.
\end{prop}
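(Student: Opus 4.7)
The plan is to argue by contradiction with a ``mass swap'' variation concentrated in two small, well-separated regions in which both densities are bounded away from zero. Assume that $S:=\{\underline r>0\}\cap\{\underline b>0\}$ has positive Lebesgue measure. By $\sigma$-continuity of measure there exists $\eta_0>0$ such that $S_{\eta_0}:=\{\underline r\ge\eta_0,\ \underline b\ge\eta_0\}$ has positive measure, and the Lebesgue density theorem supplies two distinct density points $x_1\neq x_2$ of $S_{\eta_0}$. Fix $r>0$ so small that $|x_1-x_2|>4r$ and pick subsets $A_i\subset S_{\eta_0}\cap B_r(x_i)$ of equal positive measure.

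Define the swap $\tilde r=\underline r+\alpha$, $\tilde b=\underline b-\alpha$ with $\alpha=\eta(\mathbf{1}_{A_2}-\mathbf{1}_{A_1})$. The total density $\tilde r+\tilde b$ and the individual masses coincide with those of $(\underline r,\underline b)$, and $\tilde r,\tilde b\ge 0$ pointwise whenever $|\eta|\le\eta_0$; hence both $(\tilde r,\tilde b)$ and its reflection obtained by replacing $\eta$ with $-\eta$ belong to $\A$. Because $F^0$ is a quadratic form and $K$ is even, the expansion around the minimizer terminates exactly at second order,
\[
F^0(\tilde r,\tilde b)-F^0(\underline r,\underline b)=L(\alpha)+(c_{11}+c_{22}+2)\int_\Omega \alpha\,(K\ast\alpha)\,dx,
\]
where $L$ is linear in $\alpha$. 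Since both signs of $\eta$ yield admissible competitors, the first-order optimality condition at the minimizer forces $L(\alpha)=0$.

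It remains to verify strict positivity of the quadratic bracket. Writing $I_{ij}:=\int_{A_i}\int_{A_j}K(x-y)\,dy\,dx$ and using Fubini, $\int_\Omega \alpha(K\ast\alpha)\,dx=\eta^2(I_{11}+I_{22}-2I_{12})$, and $I_{11}=I_{22}$ by translation invariance of the kernel. The separation $|x_1-x_2|>4r$ guarantees that $|x-y|\le 2r<|x_1-x_2|-2r\le|x'-y'|$ for every $(x,y)\in A_i\times A_i$ and every $(x',y')\in A_1\times A_2$, so the strict radial monotonicity of $K$ yields the pointwise inequality $K(x-y)>K(x'-y')$. Integration gives $I_{11}>I_{12}$ and the bracket is strictly positive. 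Combined with the hypothesis $c_{11}+c_{22}+2<0$ we obtain $F^0(\tilde r,\tilde b)<F^0(\underline r,\underline b)$, contradicting minimality, and therefore $|S|=0$.

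The step I expect to require the most care is the vanishing of the linear term $L(\alpha)$: it hinges on both $\pm\eta$ being simultaneously admissible, which is exactly why the argument is carried out inside $S_{\eta_0}$ rather than on the full common support, and why Lebesgue density points of $S_{\eta_0}$ are used in place of arbitrary points of $S$. Everything else reduces to the quadratic structure of $F^0$ and to the elementary pointwise comparison provided by strict radial decay of $K$; the coefficient ``$+2$'' in $c_{11}+c_{22}+2$ governing the sign is precisely the contribution of the two symmetric cross terms $-r(K\ast b)-b(K\ast r)$ in $F^0$, which is why the hypothesis takes the form $c_{11}+c_{22}<-2$.
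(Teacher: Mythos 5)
Your proof is correct and takes essentially the same route as the paper's: a mass swap between two well-separated small sets on which both densities are bounded away from zero, vanishing of the linear term because both signs of the perturbation remain admissible, and strict positivity of the exact quadratic remainder via the strict radial decrease of $K$ combined with $c_{11}+c_{22}+2<0$. The only slip is the claim $I_{11}=I_{22}$ ``by translation invariance'' (the sets $A_1$, $A_2$ need not be translates of one another), but it is harmless: your pointwise comparison already yields $I_{11}>I_{12}$ and $I_{22}>I_{12}$ separately, which is all that the strict positivity of $I_{11}+I_{22}-2I_{12}$ requires.
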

\begin{proof}
Assume by contradiction that there exists an set $D$ of positive Lebesgue measure and $\delta_0 \in (0,\frac{1}2)$ such that $\underline{r} > \delta_0$ and $\underline{b} > \delta_0$ on $D$. Let $d$ be the diameter and $R < \frac{d}6$ sufficiently small. Then there exist two balls $B_R(x_1)$ and $B_R(x_2)$ such that $|x_1 - x_2| > 4R$ and the intersection of both balls with $D$ has nonzero Lebesgue measure. Define $S_i=B_R(x_i) \cap D$. For $|\delta| < \min\{{\delta_0}{|S_1|},{\delta_0}{|S_2|})$ define variations $r^\delta$ and $b^\delta$ that coincide with $\underline{r}$ respectively $\underline{b}$ outside $S_1 \cup S_2$ and 
$$ r^\delta = \left\{\begin{array}{ll} \underline{r} + \frac{\delta}{|S_1|} & \text{in } S_1 \\ \underline{r} -  \frac{\delta}{|S_2|} & \text{in } S_2 \end{array} \right. \qquad   b^\delta = \left\{\begin{array}{ll} \underline{b} -  \frac{\delta}{|S_1|} & \text{in } S_1 \\ \underline{b} +  \frac{\delta}{|S_2|}& \text{in } S_2 \end{array} \right. $$ 
It is straightforward to see that $(r^\delta,b^\delta) \in \A$ and 
\begin{align*} 
F^0(r^\delta,b^\delta)&=F^0(\underline{r},\underline{b})  \\ 
&\quad + \delta \ell(\underline{r},\underline{b}) +\delta^2 (c_{11} + c_{22} + 2)\Big( \frac{1}{|S_1|^2}\int_{S_1} \int_{S_1} K(x-y)~dx~dy \\
&\quad + \frac{1}{|S_2|^2} \int_{S_2} \int_{S_2} K(x-y)~dx~dy-\frac{2}{|S_1|~|S_2|}\int_{S_1} \int_{S_2} K(x-y)~dx~dy\Big)
\end{align*} 
for some bounded linear functional $\ell$. From $F^0(r^\delta,b^\delta) \geq F^0(\underline{r},\underline{b})$ we conclude in the limit $\delta \rightarrow 0$ we have $\ell(\underline{r},\underline{b}) = 0$. The strict radial decrease of the kernel $K$ implies
$$ \frac{1}{|S_i|^2} \int_{S_i} \int_{S_i} K(x-y)~dx~dy    \geq  \inf_{|z|\leq 2R} K(z) $$ 
for $i \in\{1,2\}$, and
$$ \frac{2}{|S_1|~|S_2|}\int_{S_1} \int_{S_2} K(x-y)~dx~dy < 2 \sup_{|z|\geq 2R} K(z) \leq 2 \inf_{|z|\leq 2R} K(z), $$ 
hence the above difference of integrals is positive. With  $c_{11} + c_{22} + 2 < 0$ we then conclude for finite $\delta$ that 
$$ F^0(r^\delta,b^\delta) < F^0(\underline{r},\underline{b}) , $$
which is a contradiction.
\end{proof}

The entropic terms in the energy $F^\eps$ counter this separation effect on the minimizers. We can show this complete change of the situation on bounded domains. 
\begin{lemma} \label{lem:delta} Let $\Omega\subset \RR^N$ be open and bounded. For fixed $\eps > 0$ and $K$ admissible, let $(\ur,\ub) \in \mathcal{A}$ be minimizers of $F^\eps$. Then there exists a positive constant $\delta$ which depends on $\eps$, $C_K:=\int_\Omega \int_\Omega K(x-y)\;dxdy$ only such that 
 \begin{align}
   \delta \le \ur,\ub,\quad \underline{\rho} \le 1 - \delta.
 \end{align}
\end{lemma}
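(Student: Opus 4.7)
My plan is to extract uniform pointwise bounds on $(\ur,\ub)$ from the Euler--Lagrange equations of the minimizer, exploiting the blow-up of the derivatives of $z\log z$ at $z=0$ and of $(1-z)\log(1-z)$ at $z=1$.

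The first step is to show that $\ur, \ub > 0$ and $\underline{\rho} < 1$ almost everywhere, so the inequality constraints defining $\A$ are inactive a.e. If, for instance, $|\{\ur = 0\}| > 0$, I would pick a subset $A$ of positive measure inside this zero set together with a ``reservoir'' set $B$ on which $\ur \ge \delta_0$ and $1-\underline{\rho} \ge \delta_0$ for some $\delta_0 > 0$ (such a $B$ exists because $m_r > 0$ and, tacitly, $m_r + m_b < |\Omega|$). The admissible variation that adds $\alpha$ to $\ur$ on $A$ and subtracts $\alpha |A|/|B|$ from $\ur$ on $B$ changes $F^\eps$ by a leading term $\eps\,\alpha |A|\log\alpha < 0$ as $\alpha \to 0^+$, coming from the $r\log r$ entropy on $A$, while every other contribution (compensating entropy on $B$, the $(1-\rho)\log(1-\rho)$ piece, and the nonlocal terms) is $O(\alpha)$. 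This contradicts minimality, and analogous perturbations rule out $|\{\ub = 0\}| > 0$ and $|\{\underline{\rho} = 1\}| > 0$.

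With interior admissibility secured, ordinary zero-mean variations yield the Euler--Lagrange identities
\begin{equation*}
\eps \log \frac{\ur}{1-\underline{\rho}} + 2c_{11} K*\ur - 2 K*\ub = \mu_r, \qquad \eps \log \frac{\ub}{1-\underline{\rho}} + 2c_{22} K*\ub - 2 K*\ur = \mu_b,
\end{equation*}
a.e.\ in $\Omega$, with Lagrange multipliers $\mu_r, \mu_b \in \R$. Since $\ur, \ub \le 1$ on the bounded domain $\Omega$ and $K$ is admissible, the convolutions $K*\ur$ and $K*\ub$ are bounded in $L^\infty$ by a constant depending only on $C_K$. I would then bound $|\mu_r|, |\mu_b|$ from the mass constraints: a very negative $\mu_r$ would force $\ur/(1-\underline{\rho})$ uniformly tiny and violate $\int_\Omega \ur = m_r$, while simultaneously large positive $\mu_r, \mu_b$ would push $1-\underline{\rho}$ uniformly to zero and violate $\int_\Omega(1-\underline{\rho}) = |\Omega|-m_r-m_b > 0$; a symmetric argument handles $\mu_b$.

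Combining, the EL identities give pointwise bounds $c_- \le \ur/(1-\underline{\rho}), \ub/(1-\underline{\rho}) \le c_+$ with constants $c_\pm$ of the form $e^{\pm C/\eps}$ depending on $\eps$ and $C_K$. Elementary algebra then yields $\underline{\rho} = (\ur+\ub) \le 2c_+(1-\underline{\rho})$, whence $1-\underline{\rho} \ge 1/(1+2c_+)$ and consequently $\ur, \ub \ge c_-/(1+2c_+) =: \delta$, the desired bound. The main obstacle I anticipate is the rigorous execution of the first step — showing that the upper constraint $\rho \le 1$ is inactive almost everywhere in the presence of the coupling between the two species — together with the quantitative control of the Lagrange multipliers; both hinge on playing the three entropy singularities ($\ur \to 0$, $\ub \to 0$, $\underline{\rho} \to 1$) off against each other and against the uniformly bounded nonlocal terms.
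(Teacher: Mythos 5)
Your plan is sound and reaches the same conclusion by a genuinely different route. The paper argues by direct contradiction with a single quantitative competitor: assuming $\mathrm{ess\,inf}\,\ur=0$, it transfers mass $\delta$ from a set where $\ur\ge\sqrt{\delta}$ to a set where $\ur\le\delta$, so that the entropy changes by roughly $\eps C_{\mathcal{M}}\,\delta(\log\delta-\log\sqrt{\delta})=\tfrac{\eps}{2}C_{\mathcal{M}}\,\delta\log\delta$, which dominates the $O(\delta)$ nonlocal contributions and contradicts minimality; the first variation and the explicit formula \eqref{eq:rexpl} are then derived \emph{afterwards}, using the lemma to guarantee admissible two-sided variations. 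You invert this order: first a qualitative step (constraints inactive a.e.), then the Euler--Lagrange identity, then the quantitative bounds read off from the resulting explicit formula --- essentially obtaining Remark \ref{rem:regmin} as the engine of the proof rather than a corollary. This buys you the EL equation and the multiplier bounds as byproducts, at the cost of two points where extra care is genuinely required and which you correctly flag. First, a.e.\ positivity is weaker than uniform positivity, so ``ordinary zero-mean variations'' are not directly admissible; you must restrict $\phi$ to the exhausting sets $\{\ur\ge 1/n\}\cap\{1-\underline{\rho}\ge 1/n\}$, conclude that the integrand is constant on each, and match the constants --- routine, but it should be said. Second, in your first step the order of elimination matters: you cannot add $r$-mass on a subset of $\{\ur=0\}$ that sits inside $\{\underline{\rho}=1\}$, so you should first rule out $|\{\underline{\rho}=1\}|>0$ (by moving mass of whichever species is $\ge 1/2$ there onto a reservoir with $\underline{\rho}\le 1-\delta_0$, gaining $\eps|A|\alpha\log\alpha$ from the $(1-\rho)\log(1-\rho)$ term) and only then treat $\{\ur=0\}$ and $\{\ub=0\}$. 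Both your argument and the paper's tacitly need $m_r,m_b>0$ and $m_r+m_b<|\Omega|$, and both produce a $\delta$ that in truth also depends on $|\Omega|$, $m_r$, $m_b$ and $\|K\|_{L^1}$ rather than on $\eps$ and $C_K$ alone; that imprecision is in the statement of the lemma, not in your proof.
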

\begin{proof}
We argue by contradiction and thus assume that $\mathrm{ess} \inf_{x\in\Omega} \ur = 0$. Then there exists a set $\mathcal{M}_1$ with positive measure and a constant $\delta_{max}$, such that for  $0<\delta\le\delta_{max}$
 \begin{align}
  \ur \le \delta \text{ for a.e. } x \in \mathcal{M}_1
 \end{align}
Since $m_r >0$ there also exists, for $\delta$ sufficiently small, a set $\mathcal{M}_2$ such that 
\begin{align}
 \ur \ge \sqrt{\delta}\text{ for a.e. } x \in \mathcal{M}_2.
\end{align}
Without loss of generality, we assume that both sets have the same Lebesgue measure, i.e. $|\mathcal{M}_1| = |\mathcal{M}_2| =: C_{\mathcal{M}}$ (this is always possible since if one of the two sets has measure larger than the other, we just chose a smaller subset such that the measures are equal). 
We distinguish between the two cases $\underline{\rho} < \frac{1}{2}$ and $\underline{\rho} \geq \frac{1}{2}$ on $\Omega.$ 
\\First assume that $\underline{\rho} \geq \frac{1}{2}$ on $\Omega.$
Then, we define the functions $(\tilde r,\tilde b)$ by
\begin{align*}
 \tilde{r} &= \ur + \delta,\; \tilde b = \ub - \delta\text{ on }\mathcal{M}_1,\\
 \tilde{r} &= \ur - \delta,\; \tilde b = \ub + \delta\text{ on }\mathcal{M}_2,\\
  \tilde{r}&= \ur,\;\tilde b = \ub\text{ on }\Omega \setminus (\mathcal{M}_1 \cup \mathcal{M}_2)
\end{align*}
This definition implies that $\tilde r$ and $\tilde b$ have the same mass as $\ur$ and $\ub$ on $\Omega$ and also that $1-\tilde \rho =  1-\underline{\rho}$ almost everywhere on $\Omega.$ We also see that on $\mathcal{M}_1$
$$0 < \tilde r \leq 2\delta <1 \text{ and } 0< \frac{1}{2}-2\delta_{max}<\tilde{b}<1-\delta<1.$$ 
Analogously on $\mathcal{M}_2$ we see that
$$0 < \sqrt{\delta}-\delta \leq \ur-\delta = \tilde{r} <1-\delta \text{ and } 0< \ub < \tilde{b}+\delta = \rho +\delta - \ur \leq \frac{1}{2} + \delta - \sqrt{\delta}<1.$$
On $\Omega \setminus (\mathcal{M}_1 \cup \mathcal{M}_2)$ the box constraints for $(\tilde{r},\tilde{b})$ trivially hold, because $(\ur,\ub) \in \mathcal{A}.$
Therefore $(\tilde r,\, \tilde b)\in \mathcal{A}$ holds. Our goal is to show that $F^\eps(\tilde r,\tilde b) < F^\eps(\ur,\ub)$ which is a contradiction to the minimality of $(\ur,\ub)$. To this end, we examine each term in $F^\eps$ separately. First note that by the mean value theorem, we have for any $x$ such that $(x\pm \delta)\in[0,1]$
\begin{align*}
 (x + \delta)\log(x + \delta) &= x\log(x) + (\log \xi + 1)\delta \\
 (x - \delta)\log(x - \delta) &= x\log(x) - (\log \xi' + 1)\delta 
\end{align*}
for some $\xi \in (x, \, x+\delta)$ and $\xi' \in (x-\delta, \,x)$, respectively.
Thus, since on $\ur \le \delta$ on $\mathcal{M}_1$, we have 
\begin{align}\label{M1r+}
 \int_{\mathcal{M}_1} \tilde r\log \tilde r\;dx &= \int_{\mathcal{M}_1}\ur\log\ur + (\log \xi + 1)\delta \;dx \le  \int_{\mathcal{M}_1}\ur\log\ur + (\log (\ur+\delta) + 1)\delta \;dx,\nonumber \\
 &\le \int_{\mathcal{M}_1}\ur\log\ur + (\log (2\delta) + 1)\delta \;dx = \int_{\mathcal{M}_1}\ur\log\ur\;dx + C_{\mathcal{M}}(\log (2\delta) + 1)\delta ,\\
 \int_{\mathcal{M}_1} \tilde b\log \tilde b\;dx &= \int_{\mathcal{M}_1}\ub\log\ub -(\log \xi' + 1)\delta \;dx,  \nonumber\\
 &\le \int_{\mathcal{M}_1}\ub\log\ub \;dx - C_{\mathcal{M}} (\log (\frac{1}{2} - \delta_{max}) +1) \delta \nonumber
\end{align}
On $\mathcal{M}_2$, using $r\ge \sqrt{\delta}$ yields
\begin{align} \label{M2-}
 \int_{\mathcal{M}_2} \tilde r\log \tilde r\;dx &= \int_{\mathcal{M}_2}\ur\log\ur -(\log \xi' + 1)\delta \;dx ,\nonumber\\
 &\le \int_{\mathcal{M}_2}\ur\log\ur\;dx -C_{\mathcal{M}}(\log (\sqrt{\delta}-\delta) + 1)\delta \\
 \int_{\mathcal{M}_2} \tilde b\log \tilde b\;dx  &= \int_{\mathcal{M}_2}\ub\log\ub  + (\log \xi + 1)\delta \;dx, \nonumber\\
 &\le \int_{\mathcal{M}_2}\ub\log\ub  \;dx + C_{\mathcal{M}}(\log (1+\delta) + 1)\delta \nonumber,
\end{align} 
we thus obtain 
\begin{align} \label{Summary}
 &\quad \eps\int_\Omega \tilde r \log \tilde r + \tilde b \log \tilde b + (1-\tilde \rho)\log(1-\tilde \rho)\;dx \nonumber \\ 
 &\le \eps\int_\Omega \ur \log \ur + \ub \log \ub + (1-\underline{ \rho})\log(1-\underline{\rho})\;dx\\ \nonumber
 &\quad + \delta C_{\mathcal{M}}(\log(2\delta) -\log (\frac{1}{2} - \delta_{max})-\log (\sqrt{\delta}-\delta)+\log (1+\delta)).
\end{align}
Now consider $\rho < \frac{1}{2}$ on $\Omega.$ We only have to adjust the definition of $\tilde b$ on $\mathcal{M}_1,$ because $\ub$ can be smaller than $\delta$ in this case. Instead we use   
\begin{align*}
 \tilde{r} &= \ur + \delta,\; \tilde b = \ub \text{ on }\mathcal{M}_1,\\
 \tilde{r} &= \ur - \delta,\; \tilde b = \ub \text{ on }\mathcal{M}_2,\\
  \tilde{r}&= \ur,\;\tilde b = \ub\text{ on }\Omega \setminus (\mathcal{M}_1 \cup \mathcal{M}_2)
\end{align*}
The mass constraints and box constraints of $\mathcal{A}$ for $(\tilde{r},\tilde{b})$ on $\Omega$ follow analogously. Yet the total density changes to $\tilde{\rho}=\underline{\rho} + \delta$ on $\mathcal{M}_1$, respectively $\tilde{\rho}=\underline{\rho} - \delta$ on $\mathcal{M}_2$ and $\tilde{\rho}=\underline{\rho}$ on $\Omega \setminus (\mathcal{M}_1 \cup \mathcal{M}_2).$
We begin by considering the $F^E$ on $\mathcal{M}_1.$ The estimate for $\tilde{r}$ is the same as \eqref{M1r+} and $\tilde{b}=\ub.$ Furthermore $\tilde{\rho}= \underline{\rho} + \delta$ yields
\begin{align*}
 \int_{\mathcal{M}_1} (1-\tilde \rho) \log (1-\tilde \rho) \;dx &= \int_{\mathcal{M}_1}(1-\underline{\rho})\log(1-\underline{\rho} )- (\log \xi' + 1)\delta \;dx,\\
 &\le  \int_{\mathcal{M}_1}(1-\underline{\rho})\log(1-\underline{\rho} )\;dx -C_{\mathcal{M}}(\log(\frac{1}{2}-\delta_{max}) + 1)\delta .
\end{align*}
Now on $\mathcal{M}_2$ we look at \eqref{M2-} for the term $\tilde{r}$ and at \eqref{M1r+} for $\tilde{\rho}.$
Therefore \eqref{Summary} is not changed if $\underline{\rho}< \frac{1}{2}.$ 
Arguing similarly for the nonlocal terms $F^0$ using the box constraints on $\mathcal{A}$ yields 
\begin{align*}
 \int_{\mathcal{M}_1} c_{11}\tilde{r}(K* \tilde r) \;dx &\le c_{11}(\int_{\mathcal{M}_1}r(K*r) \;dx-2\delta C_K +\delta^2C_K) 
\end{align*}
and the analogue for $\tilde b$ on $\mathcal{M}_2$ as well as $\tilde \rho$ on $\mathcal{M}_1$ when $\underline{\rho} < \frac{1}{2}.$
The other terms follow with the necessary adjustments for the respective signs.
We obtain
\begin{align*}
 F^0(\tilde r, \tilde b) \le F^0(\ur, \ub) +(8- 2c_{11}-2c_{22})C_K\delta+8C_K \delta^2.
\end{align*}
In other words there exists a constant $C=C(C_{\mathcal{M}},C_K)$ such that
$$F^\eps(\tilde r,\tilde b) \le F^\eps(\ur, \ub) + \delta(C_{\mathcal{M}}\log(2\delta) + C)$$
For $\delta$ small enough, the constant term on the right-hand-side becomes negative which contradicts the fact that $(\ur, \ub)$ are minimizers. The cases $\mathrm{ess} \inf_{x\in\Omega} \ub = 0$ and $\mathrm{ess} \inf_{x\in\Omega} 1-\underline{\rho} = 0$ can be treated by the same argument and thus we conclude the assertion.
\end{proof}
The previous lemma allows us to easily construct variations that of the minimizers that stay in $\A$ and thus the derivation of the first variation.
\begin{prop}[First Variation] Let $(\ur, \, \ub)$ be a minimizer of $F^\eps$ in $\mathcal{A}$ on the open and bounded domain $\Omega\subset \RR^N$. Then for any $\phi \in L^{\infty}(\Omega)$ with $\int_{\Omega}\phi \;dx = 0$, 
we have
\begin{align*}
\eps \int_{\Omega} \phi (\log \ur - \log(1-\underline{\rho})) +2 \phi (c_{11}(K\ast \ur)-(K\ast \ub))\; dx = 0,
\end{align*}
and the analogue result for $\ub$. This implies that there exist constants $C_1,\,C_2$ that only depend on $\eps,\,\ur,\,\ub,\,c_{11},\,c_{22}$ and $\Omega$ such that, for a.e. $x\in\Omega$
\begin{align}\label{eq:firstvar}
\eps [\log \ur - \log(1-\underline{\rho})] +2(c_{11}(K\ast \ur)-(K\ast \ub)) &= C_1|\Omega|,\\\nonumber
\eps [\log \ub - \log(1-\underline{\rho})] +2(c_{22}(K\ast \ub)-(K\ast \ur)) &= C_2|\Omega|,
\end{align}
holds. 
\end{prop}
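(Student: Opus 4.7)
\medskip

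\noindent\textbf{Proof plan.} The approach is a standard first-variation/Lagrange-multiplier argument whose success hinges on the strict separation from the boundary of $\A$ established in Lemma~\ref{lem:delta}. That lemma yields $\delta>0$ with $\ur,\ub\geq\delta$ and $1-\underline{\rho}\geq\delta$ a.e., so small perturbations remain inside the interior of all box constraints, while the mean-zero condition on the test functions will take care of the mass constraint.

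Concretely, for $\phi\in L^\infty(\Omega)$ with $\int_\Omega\phi\,dx=0$, I would set $r^t := \ur + t\phi$ and $b^t := \ub$, verify that $(r^t,b^t)\in\A$ for $|t|\leq \delta/(1+\|\phi\|_{L^\infty})$, and use minimality to conclude $\frac{d}{dt}\big|_{t=0}F^\eps(r^t,b^t)=0$. The $\delta$-bound makes the $t$-derivative of the entropy integrand uniformly bounded, so dominated convergence applies and yields
\begin{align*}
\eps\left.\frac{d}{dt}\right|_{t=0}\!F^E(r^t,b^t) = \eps\int_\Omega \phi\bigl(\log\ur - \log(1-\underline{\rho})\bigr)\,dx.
\end{align*}
For $F^0$, radial symmetry of $K$ together with Fubini gives $\int_\Omega \ub(K*\phi)\,dx = \int_\Omega \phi(K*\ub)\,dx$, so the self-interaction term contributes $2c_{11}\int_\Omega\phi(K*\ur)\,dx$ and the mixed terms contribute $-2\int_\Omega\phi(K*\ub)\,dx$, producing the first claimed identity. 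Perturbing $b$ in the same fashion yields the analogous identity for $\ub$.

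Since the identity holds for every mean-zero $\phi\in L^\infty(\Omega)$, the bracketed expression $g := \eps(\log\ur-\log(1-\underline{\rho}))+2(c_{11}(K*\ur)-(K*\ub))$ is orthogonal to every mean-zero function in the $L^1$-$L^\infty$ pairing and must therefore equal a constant a.e.; here $g\in L^\infty(\Omega)$ because the logarithms are bounded by Lemma~\ref{lem:delta} and the convolutions $K*\ur, K*\ub$ lie in $L^\infty(\Omega)$ since $K$ is locally integrable by \textbf{(K3)} and $\Omega$ is bounded. Denoting this constant by $C_1|\Omega|$ yields \eqref{eq:firstvar}, and an identical argument in $b$ produces $C_2$. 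The main obstacle is the differentiation of the entropy term: $\frac{d}{dx}(x\log x) = \log x + 1$ is singular as $x\to 0^+$, so without Lemma~\ref{lem:delta} the perturbations could approach the boundary of $\A$ and dominated convergence would fail. Once the strict positivity is in hand, the remainder is routine bookkeeping.
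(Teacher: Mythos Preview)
Your proposal is correct and follows essentially the same route as the paper: both use Lemma~\ref{lem:delta} to ensure $(\ur+t\phi,\ub)\in\A$ for small $|t|$, differentiate $F^\eps$ at $t=0$, and then deduce that the resulting $L^\infty$ function must be constant. The only cosmetic differences are that the paper computes a one-sided limit and then replaces $\phi$ by $-\phi$, and it makes the constancy argument explicit by testing with $\phi=f-|\Omega|^{-1}\int_\Omega f\,dx$ to obtain $\int_\Omega|f-C_1|^2\,dx=0$.
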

\begin{proof}
By Lemma \ref{lem:delta} we know that $(\ur + t\phi, \, \ub) \in \mathcal{A}$ for small enough $t.$ Since $(\ur, \, \ub)$ is a minimizer for $F^{\eps}$ the difference $F^{\eps}(\ur+t \phi,\, b)-F^{\eps}(\ur,\, \ub)$ must be nonnegative. This implies
\begin{align*}
&0 \le \lim_{t \to 0} \frac{F^{\eps}(\ur+t \phi,\, b)-F^{\eps}(\ur,\, \ub)}{t}  \\
&= \lim_{t \to 0} \eps \int_{\Omega}\frac{(\ur+t\phi)\log(\ur+t\phi)-\ur \log \ur }{t}+\frac{((1-\underline{\rho})+t\phi)\log((1-\underline{\rho})+t\phi)-(1-\underline{\rho})\log (1-\underline{\rho}) }{t} \; dx \\
&\quad +\int_{\Omega} c_{11}\ur(K*\phi)+c_{11}\phi(K*\ur) +t c_{11} \phi (K*\phi) - \phi(K* \ub)-\ub (K*\phi) \;dx\\
&=\int_{\Omega} \phi\left(\eps (\log \ur - \log(1-\underline{\rho})) +2c_{11}(K*\ur) - 2(K*\ub)\right)\;dx.
\end{align*}
We can repeat the argument for $-\phi$ to obtain that the integral is zero. 
By Lemma \ref{lem:delta} we know that the integrand
$$f(\ur, \ub)=\eps [\log \ur - \log(1-\underline{\rho})] +2(c_{11}(K\ast \ur)-(K\ast \ub))$$
is an element of $(L^{\infty}(\Omega))^2.$ Therefore we can take $\phi=f-C_1$ with $C_1=|\Omega|^{-1}\int_{\Omega}f dx$ and obtain
\begin{align*}
\int_{\Omega} |f-C_1|^2 \;dx=0.
\end{align*}
The same holds for the second integrand and the result follows.
\end{proof}
\begin{rem}[Regularity of minimizers]\label{rem:regmin} Let $(\ur, \, \ub)$ be a minimizer of $F^\eps$ in $\mathcal{A}$ on the open and bounded domain $\Omega\subset \RR^N$ as above.
Solving \eqref{eq:firstvar} for $\ur$ yields the nonlinear expression
\begin{align}\label{eq:rexpl}
 \ur &= \frac{e^{\frac{2}{\eps}(-c_{11}K\ast \ur+K \ast \ub+\frac{C_1}{2}|\Omega|)}}{1+e^{\frac{2}{\eps}(-c_{11}K\ast \ur+K\ast \ub+\frac{C_1}{2}|\Omega|)}+e^{\frac{2}{\eps}(-c_{22}K\ast \ub+K\ast \ur+\frac{C_2}{2}|\Omega|)}}.
\end{align}
Since $K \in W^{1,1}(\Omega)$ by definition \ref{def:admissible}, Young's inequality for convolutions yields $K \ast r,\, K \ast b \in L^{\infty}(\Omega)$ and $\nabla K\ast \ur,\,\nabla K\ast \ub \in [L^\infty(\Omega)]^N$ since $\ur,\,\ub \in \A$. When taking the gradient of \eqref{eq:rexpl}, only combinations of these terms, concatenated with smooth functions, occur and thus it is also bounded in $L^\infty$. This immediately implies $\ur \in W^{1,\infty}(\Omega)$ and the same assertion for $\ub$ as well.
\end{rem}

\subsection{Convergence as $\eps \rightarrow 0$}
We verify the approximation of $E^0$ with the entropic terms via a $\Gamma$-convergence result and the convergence of minimizers. For further studies of the detailed shapes of the minimizers we refer to the numerical examples in Section \ref{sec:numerics}.
\begin{thm}[$\Gamma$-Convergence] Let $\Omega\subset \RR^N$ be an open and bounded domain. 
 As $\eps \to 0$ and for all $(r,b)\in\A$, the functional
 \begin{align*}
 E^\eps(r,b)
 \end{align*}
$\Gamma$-converges to the functional 
\begin{align*}
 E^0(r,b) 
\end{align*}
with respect to the weak topology in $[L^2(\RR^N)]^2$. Furthermore, minimizers $(r^\eps,b^\eps)$ of $E^\eps$ converge to minimizers $(r_0,b_0)$ of $E^0$ as $\eps \to 0$.
\end{thm}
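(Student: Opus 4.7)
The plan is to verify the two $\Gamma$-convergence inequalities separately and then invoke the standard equicoercivity argument for convergence of minimizers. Throughout, I would extend both $E^\eps$ and $E^0$ by $+\infty$ outside $\A$; since $\A$ is convex and closed in $[L^2(\Omega)]^2$ (mass constraints are linear, pointwise nonnegativity and $r+b\le 1$ are convex constraints), it is weakly closed, so this extension is weakly lower semicontinuous in its indicator.

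For the $\limsup$ inequality I would simply use the constant recovery sequence $(r^\eps,b^\eps)=(r,b)$. Since $z\mapsto z\log z$ is bounded on $[0,1]$ (by $1/e$ in absolute value), one has $|F^E(r,b)|\le 3 e^{-1}|\Omega|$ on $\A$ for bounded $\Omega$, so
\begin{equation*}
\limsup_{\eps\to 0} E^\eps(r,b) = E^0(r,b) + \lim_{\eps\to 0}\eps F^E(r,b) = E^0(r,b).
\end{equation*}
For the $\liminf$ inequality, consider $(r^\eps,b^\eps)\rightharpoonup(r,b)$ weakly in $[L^2(\Omega)]^2$. Without loss of generality the energies are uniformly bounded, which forces the sequence into $\A$, and the weak limit then also lies in $\A$. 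The entropy contribution satisfies $|\eps F^E(r^\eps,b^\eps)|\le C\eps|\Omega|\to 0$; the nonlocal term $F^0$ is weakly lower semicontinuous (the argument already invoked in the proof of Theorem \ref{thm:existence}, via the results of \cite{Choksi2015}); and $F^C$ is linear and bounded, hence weakly continuous. Combining gives $\liminf E^\eps(r^\eps,b^\eps)\ge E^0(r,b)$.

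To deduce the convergence of minimizers, I would appeal to equicoercivity together with the standard fundamental theorem of $\Gamma$-convergence. Any family of minimizers $(r^\eps,b^\eps)$ lies in $\A$, hence is bounded in $L^\infty(\Omega)\subset L^2(\Omega)$ on the bounded domain $\Omega$, so by the Banach--Alaoglu theorem a subsequence converges weakly in $[L^2(\Omega)]^2$ to some $(r_0,b_0)\in\A$. Combining the $\Gamma$-convergence with this compactness yields $E^0(r_0,b_0) \le \liminf E^\eps(r^\eps,b^\eps) \le \limsup E^\eps(r^\eps,b^\eps) \le E^0(r,b)$ for every admissible $(r,b)$, so $(r_0,b_0)$ is indeed a minimizer of $E^0$.

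The only nontrivial input is the weak lower semicontinuity of $F^0$, which has already been established in the literature and invoked earlier in the paper; everything else reduces to the observation that the entropic regularization is a uniformly small perturbation on $\A$ when $\Omega$ is bounded. Consequently I do not expect a serious obstacle, though a careful reader may want to double-check that the weak $L^2$ limit preserves the pointwise constraint $r+b\le 1$ — this follows because the constraint defines a convex closed subset of $L^2(\Omega)$ and is therefore preserved under weak limits.
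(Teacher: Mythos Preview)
Your proposal is correct and follows essentially the same route as the paper: constant recovery sequence, the uniform bound $|F^E|\le C|\Omega|$ on $\A$ to kill the entropic term, weak lower semicontinuity of $F^0$ from \cite{Choksi2015}, and compactness of $\A$ in weak $L^2$ for the convergence of minimizers. Your write-up is in fact slightly more careful than the paper's, in that you explicitly treat $F^C$ and the weak closedness of $\A$.
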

\begin{proof}
To prove $\Gamma$-convergence, we have to check a $\liminf-$inequality and the existence of a recovery sequence, see \cite[definition 1.5]{Braides2002} for details. To this end consider arbitrary sequences $r^\eps,\,b^\eps \in \mathcal{A}$ that weakly converge to some $(r,b)\in\mathcal{A}$ as $\eps \to 0$. Since on $\mathcal{A}$, the functional $F^E$ is uniformly bounded, is follows that $\eps F^E$ converges to zero. On the other hand, we know that the interaction terms $F^0$ are weakly lower semicontinuous and thus we have
\begin{align}\label{eq:gammasup}
 E^0(r,b)  &\le \liminf_{\eps \to 0} E^\eps(r_\eps,b_\eps).
\end{align}
Choosing for every $(r,b)\in\mathcal{A}$ the constant sequence as recovery sequence, and using once more the fact that $\eps F^E$ converges to zero as $\eps \to 0$ for all $(r,b)\in\mathcal{A}$ we have
\begin{align*}
 \lim_{\eps \to 0} E^\eps(r_\eps,b_\eps)  = E^0(r,b).
\end{align*}
This implies the $\Gamma$-convergence of $E^\eps$ to $E^0$. The fact that the set $\mathcal{A}$ is precompact with respect to the weak topology in $L^2$ trivially implies the coerciveness of $E^\eps$ and therefore, \cite[Theorem 1.21]{Braides2002}, we have the convergence of minimizers as $\eps \to 0$ with respect to the weak topology in $L^2$.
\end{proof}

\subsection{Large Scale Structures}

Finally we discuss the structures appearing as energy minimizers from a large scale point of view, i.e. we rescale space to a macroscopic variable $\tilde x  = \lambda x$ with $\lambda>0$ small and assume that the densities can be rewritten as functions $\tilde r$ and $\tilde b$ of the new variable $\tilde x$. As a consequence of the rescaling we find up to higher order terms in $\lambda$
\begin{align*}
\int_{\R^N} \int_{\R^N} K(x-y) r(x) b(y)~dx~dy &= \int_{\R^N} \int_{\R^N} K(\frac{\tilde x - \tilde y}\lambda) \tilde r(\tilde x) \tilde b(\tilde y) \lambda^{-2 N} ~d\tilde x~d\tilde y \\ 
&\approx \lambda^{-N} \int_{\R^N} k \tilde r(\tilde x) \tilde b(\tilde x) - C \lambda^2 \nabla \tilde r(\tilde x) \cdot \nabla \tilde b(\tilde x)  ~d\tilde x ,
\end{align*}
with $k$ defined in \eqref{eq:k}, $C$ being related to the second moment of $K$. With similar approximation of the self-interaction terms the energy $\lambda^{N} F^0$ equals up to second order 
$$ \tilde F^{\lambda}(\tilde r, \tilde b) = \lambda^2\int_{\R^N} (\frac{a_{11}}2 |\nabla \tilde r|^2 + 
\frac{a_{22} }2 |\nabla \tilde b|^2  + a_{12} \nabla \tilde r \cdot \nabla \tilde b) ~d\tilde x + k \int_{\R^N} W(\tilde r,\tilde b)~d\tilde x, $$
with $W$ as in \eqref{eq:Wdefinition} with $\eps = 0$ and
$$a_{11} = -c_{11}C, \quad  a_{22} = -c_{22}C, \quad a_{12} = C.$$ 
We see that for $c_{11},\, c_{22} > 1$ the symmetric $2 \times 2$ matrix $A$ with diagonal entries $a_{ii}$ und off-diagonal entries $a_{12}$ is symmetric. Hence, we may perform a change of variables to $(u,v) = A^{1/2} (\tilde r, \tilde b)$ and obtain the diagonalized problem 
$$ G^\lambda(u,v) = \frac{\lambda^2}2 \int_{\R^N} |\nabla u|^2 + |\nabla v|^2 - k W(A^{-1/2} (u,v))~d\tilde x $$
to be minimized on the transformed unit interval 
$$ {\cal B} = \{ (u,v) ~|~A^{-1/2} (u,v) \in {\cal A}\}. $$
The asymptotics of problems of this form has been investigated by Baldo \cite{baldo1990minimal}, who showed $\Gamma$-convergence to a multi-phase problem, the limiting functional being a sum of weighted perimeters between the pure phases. Translated to our setting we can conclude that  $\tilde F^\lambda$ $\Gamma$-converges to a functional of the form 
\begin{equation} \label{eq:partitioning}
	\tilde F^0(R,B) = d_{RB} \text{Per}(\partial R \cap \partial B) + d_{RV} \text{Per}(\partial R \setminus \partial B) + d_{BV} \text{Per}(\partial B \setminus \partial R)
\end{equation}
and weights $d_{RB}$, $d_{RV}$, $d_{BV}$ that can be derived from the behaviour of the potential $W$, i.e. the values $c_{11}$, $c_{22}$, on the unit triangle. Thus, we see that again that generic energy minimizers are segregated and minimizing structures are determined by the solutions of multiphase versions of the isoperimetric problems.

\section{Numerical Study of Minimizers}\label{sec:nummin}
In this Section we present a numerical algorithm to solve the optimization problem 
\begin{align}\label{eq:minprob}
 \argmin_{(r,b)\in\mathcal{A}}\; \eps F^E(r,b) + F^0(r,b)  + \int_{\Omega} (r+b)V\;dx ,
\end{align}
We perform our calculation on $\Omega\subset \mathbb{R}^N$, $N=1,2$, an open and bounded domain with smooth boundary $\partial\Omega$. In view of Lemma \ref{lem:delta}, we can then restrict ourselves to the set 
\begin{align}
 \mathcal{A}^\delta := \{(r,\,b) \in &L^1(\mathbb{R}^N, \mathbb{R})^2: \int_{\mathbb{R}^N}r\;dx=m_r,\int_{\mathbb{R}^N}b\;dx=m_b, \\
 &\delta \le r,b,\,\mbox{and}\; \rho=r+b \leq 1-\delta \text{ for a.e. }x\in \Omega  \},
\end{align}
for some $\delta >0$ sufficiently small.
Our numerical algorithm is based on a splitting approach. First, we introduce the sets
\begin{align}
 \mathcal{A}_{M} &:= \{(r,\,b) \in L^2(\Omega)^2: \int_{\Omega}r\;dx=m_r,\int_{\Omega}b\;dx=m_b \},\\
 \mathcal{A}_{B}^\delta &:= \{(r,\,b) \in L^2(\Omega)^2: \delta \le r,\,b,\;\rho=r+b \leq 1-\delta \text{ for a.e. }x\in \Omega \}.
\end{align}
with the corresponding projections
\begin{align}\label{eq:projam}
 P_{\mathcal{A}_M}: L^2(\Omega)^2 \to \mathcal{A}_M, \quad P_{\mathcal{A}_M}(r,b) = \left(\begin{array}{l}
                                                                r - \frac{1}{|\Omega|}(\int_\Omega r\;dx - m_r)\\
                                                                b - \frac{1}{|\Omega|}(\int_\Omega b\;dx - m_b)
                                                               \end{array}\right)
\end{align}
and
\begin{align}\label{eq:projab}
 P_{\mathcal{A}_B^\delta}: L^2(\Omega)^2 \to \mathcal{A}_B^\delta, \quad P_{\mathcal{A}_B^\delta}(r,b) = \left(\begin{array}{l}
                                                                \frac{1}{2}((1-\delta)-(\tilde b-\tilde r))\\
                                                                \frac{1}{2}((1-\delta)+(\tilde b-\tilde r))
                                                               \end{array}\right)
\end{align}
with $\tilde r = \min(\max(r,\delta),1-\delta)$ and $\tilde b = \min(\max(b,\delta),1-\delta)$. Obviously, we have $\mathcal{A}^\delta = \mathcal{A}^\delta_B \cap \mathcal{A}_M$.
Then we consider the problem
\begin{align*}
 \argmin_{(r_1,b_1)\in \mathcal{A}_B^\delta,\,(r_2,b_2)\in \mathcal{A}_M}\; &\eps F^E(r_1,b_1)  + F^0(r_2,b_2)\;dx + \int_{\Omega} (r_2+b_2)V\;dx,\\
 &\mbox{such that}\quad r_1 = r_2,\; b_1 = b_2
\end{align*}
The corresponding augmented Lagrangian \cite{Powel69,Hestenes1969}, with fixed parameter $\mu > 0$ is then given by 
\begin{align*}
 &\mathcal{L}_\mu(r_1,b_1,r_2,b_2,\lambda_r,\lambda_b) = \eps F^E(r_1,b_1) + F^0(r_2,b_2)\;dx + \int_{\Omega} (r_2+b_2)V\;dx \\
    &\quad + \int_\Omega \lambda_r(r_1-r_2)\;dx + \int_\Omega \lambda_b(b_1-b_2)\;dx  + \frac{\mu}{2}\left( \|r_1-r_2\|_{L^2(\Omega)}^2 + \|b_1-b_2\|_{L^2(\Omega)}^2\right)
\end{align*}
To solve this saddle point problem we use the following ADMM scheme \cite{Osher2016}
\begin{align}\label{eq:O1}
 (r_1,b_1)^{n+1} &:= \argmin_{(r,b)\in \mathcal{A}_B^\delta}  \mathcal{L}_\mu(r,b,r_2^n,b_2^n,\lambda_r^n,\lambda_b^n)\\\label{eq:O2}
 (r_2,b_2)^{n+1} &:= \argmin_{(r,b)\in \mathcal{A}_M}  \mathcal{L}_\mu(r_1^n,b_1^n,r,b,\lambda_r^n,\lambda_b^n)\\\label{eq:Lupdr}
 \lambda_r^{n+1} &:= \lambda_r^{n} + \mu(r_1^{n+1}-r_2^{n+1}),\\ \label{eq:Lupdb}
 \lambda_b^{n+1} &:= \lambda_b^{n} + \mu(b_1^{n+1}-b_2^{n+1})
\end{align}
In order to actually solve the two minimization problems \eqref{eq:O1} and \eqref{eq:O2}, we employ a projected gradient method. In fact, for \eqref{eq:O1}, the first order optimality condition (of the reduced problem) is given as, \cite[Thm 1.48]{Hinze2009}
\begin{align}
(\ur,\, \ub)\in \mathcal{A}_B^\delta,\;\text{ and } \langle \nabla \mathcal{L}_\mu(\ur,\ub,r_2^n,b_2^n,\lambda_r^n,\lambda_b^n), \binom{\ur}{\ub} - \binom{r}{b} \rangle\qquad \forall (r,b) \in \mathcal{A}_B^\delta,
\end{align}
with
\begin{align}
 \nabla \mathcal{L}_\mu(r,b,r_2^n,b_2^n,\lambda_r^n,\lambda_b^n) = \left(\begin{array}{l}
                                                                \eps[\log(r) - \log(1-r-b)] + V - \lambda_r^n - \mu(r-r_2^n)\\
                                                                \eps[\log(b) - \log(1-r-b)] + V - \lambda_b^n - \mu(b-b_2^n)
                                                               \end{array}\right),
\end{align}
which can be shown to be equivalent \cite[Lemma 1.12]{Hinze2009} to
\begin{align*}
 P_{\mathcal{A}_B^\delta}(\nabla \mathcal{L}_\mu(r,b,r_2^n,b_2^n,\lambda_r^n,\lambda_b^n)) = 0.
\end{align*}
For \eqref{eq:O2}, the corresponding condition gradient is given by
\begin{align}
 \nabla \mathcal{L}_\mu(r_1^n,b_1^n,r,b,\lambda_r^n,\lambda_b^n) = \left(\begin{array}{l}
                                                                c_{11}V_{r} - V_{b} + \lambda_r^n + \mu(r-r_1^n)\\
                                                                c_{22}V_{b} - V_{r} + \lambda_b^n + \mu(b-b_1^n)
                                                               \end{array}\right).
\end{align}
with
 $V_r = K \ast r$ and $V_b = K \ast b$.
%
To actually solve these problems using a projected steepest descent, we employ a P1 finite element scheme. We only describe the details for the case $N=2$ (with obvious modifications for $N=1$). To this end, we approximate $\Omega$ by a polygonal domain $\Omega_h,$ for which we introduce a conforming triangulation $T_h=\{T\}$. As usual, we call $h=\max_T h_T$ the mesh size. On $\Omega_h$, we introduce the space of continuous piecewise linear functions over $\Omega_h$, 
$$
V_h = \{ v_h \in C(\Omega_h): v_h|_T \in P_1(T) \quad \text{for all } T \in T_h\}.
$$ 
Note that by construction $V_h \subset H^1(\Omega_h)$. Since we restricted ourselves to $K$ being the Coulomb-Kernel, calculating the convolution $u=K\ast r$ is equivalent to solving the problem
\begin{align}
 -\Delta u &= r,\;\text{ in } \Omega\\
 u &= 0,\;\text{ on }\partial\Omega.
\end{align}
For all computations, we use a P1 finite element approximation. The use of first order finite elements has the important merit that the coefficient vector containts the node values and therefore, we can directly apply the projection \eqref{eq:projab} on the box constraints $\A_B$ (for \eqref{eq:projam} we have to employ the mass matrix in order to evaluate the integral). For extensions to higher order finite elements see \cite{Wurst2015}. All examples are implemented in MATLAB.\\
All one-dimensional calculation are done on the domain $\Omega_1 = [-1,1]$ while in two dimensions we used the circle with radius $r=2$, i.e. $\Omega_2 = B_2(0)$. 
%
\subsection{Examples in Spatial Dimension One}
We start by numerically verifying the results of Cicalese et. al. \cite{Cicalese2015}. The potential is given by
\begin{align}\label{eq:Vnum}
 V(x) = \left\{\begin{array}{cl}
	      (x-0.5)^2 &  x > 0.5,\\
	      (x+0.5)^2 &  x < 0.5,\\
	      0& \text{otherwise}.
	      \end{array}\right.
\end{align}
We initialize both $r$ and $b$ with random values in the interval $[0,0.49]$ which ensures $\rho \le 0.98 < 1$. We use the ADMM scheme described above with step size $\tau = 0.01$ and take $1000$ points to discretize space for the projected gradient iterations. We are able to numerically verify the different cases as depicted in \cite[Figure 2]{Cicalese2015} for the Coulomb kernel. The results are shown in Figure \ref{fig:epszero1d}. 
\begin{figure}\label{fig:nodiffadmm}
\begin{center}
\begin{subfigure}[t]{0.33\textwidth}
      \centering
      \includegraphics[width=\textwidth]{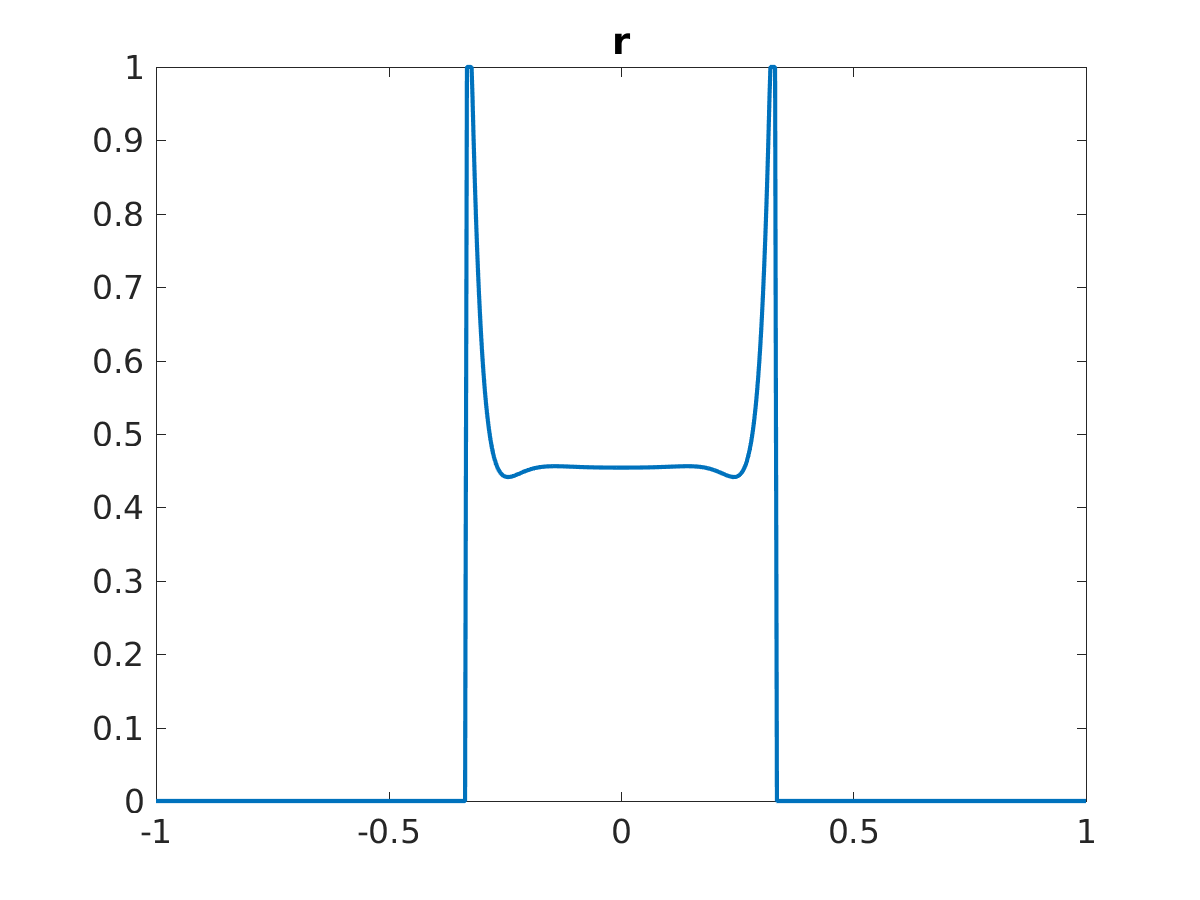}\\
      \includegraphics[width=\textwidth]{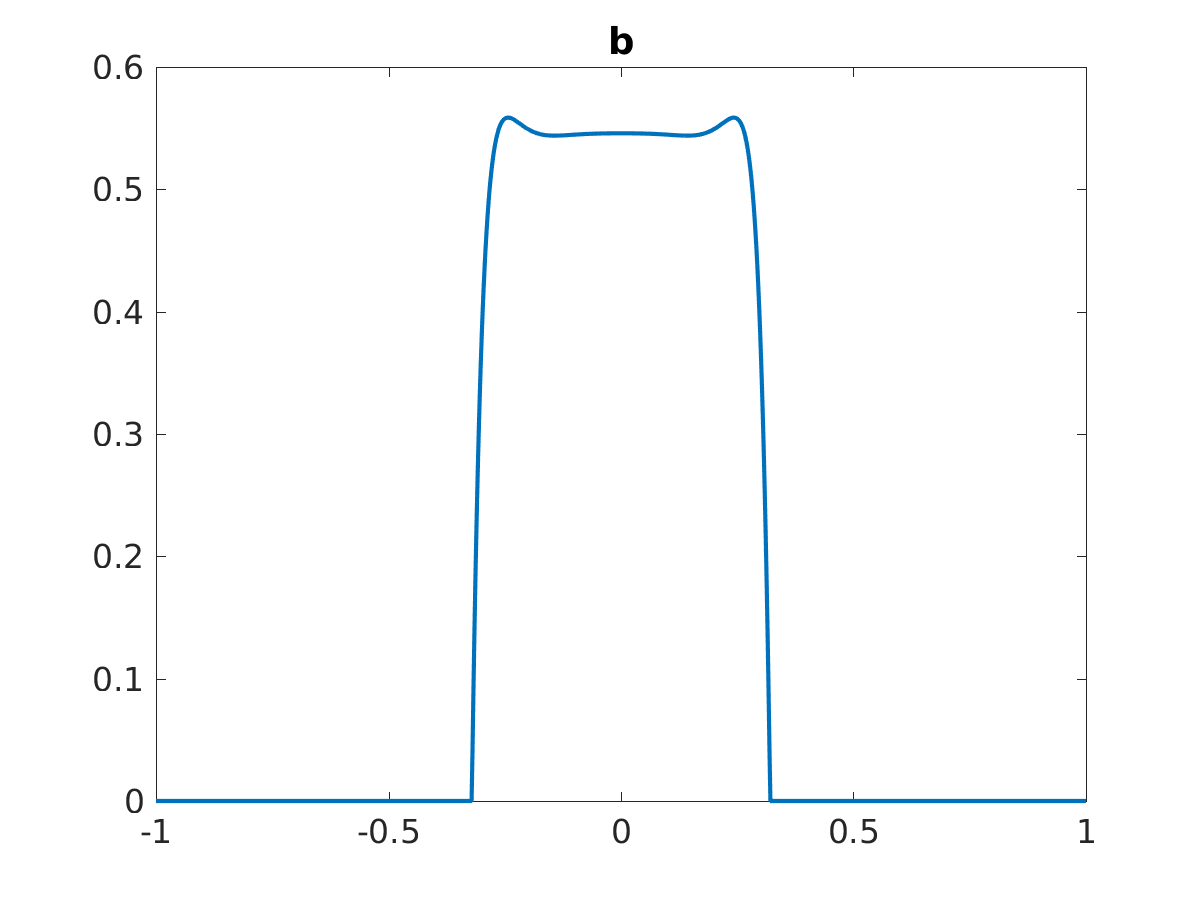}
      \caption{$(c_{11},c_{22}) = (-0.4,-0.5)$}
  \end{subfigure}%
  \begin{subfigure}[t]{0.33\textwidth}
      \centering
      \includegraphics[width=\textwidth]{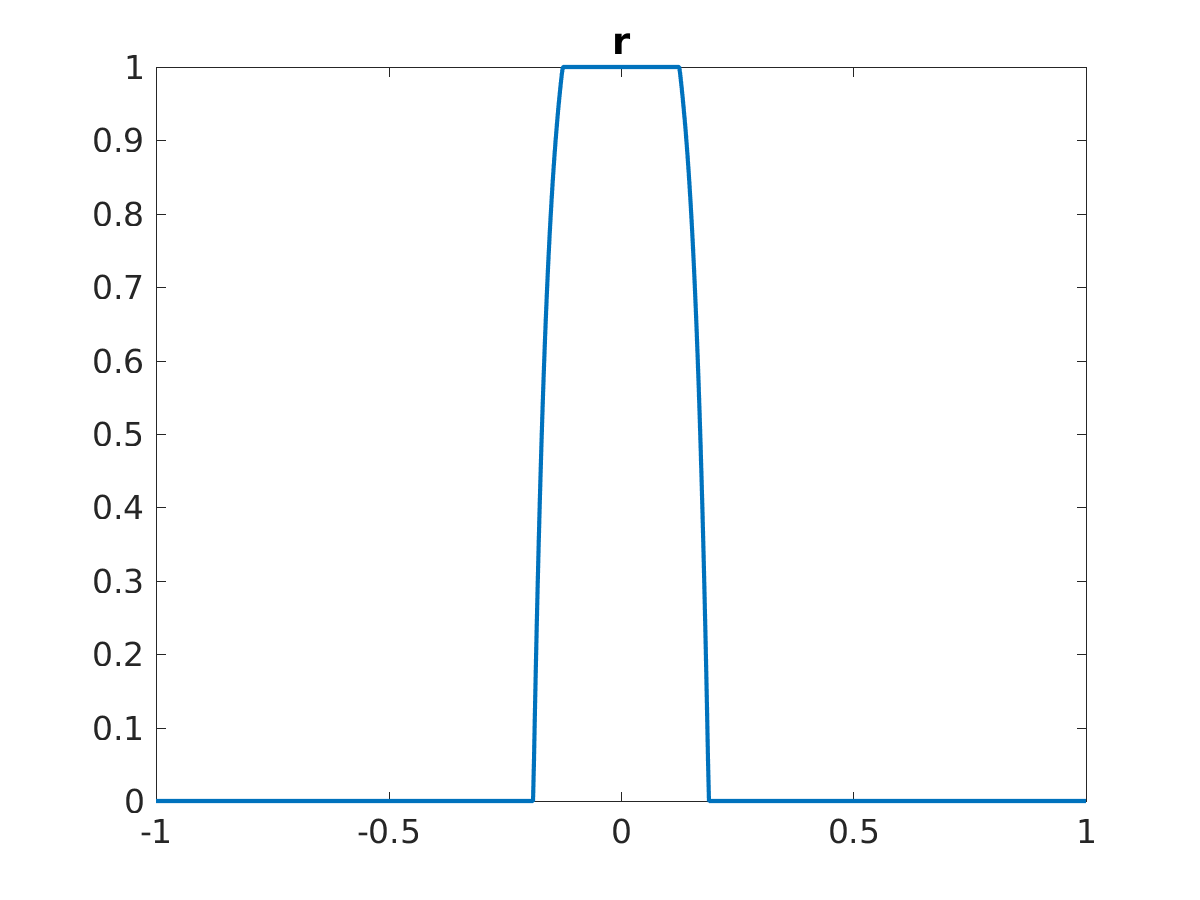}\\
      \includegraphics[width=\textwidth]{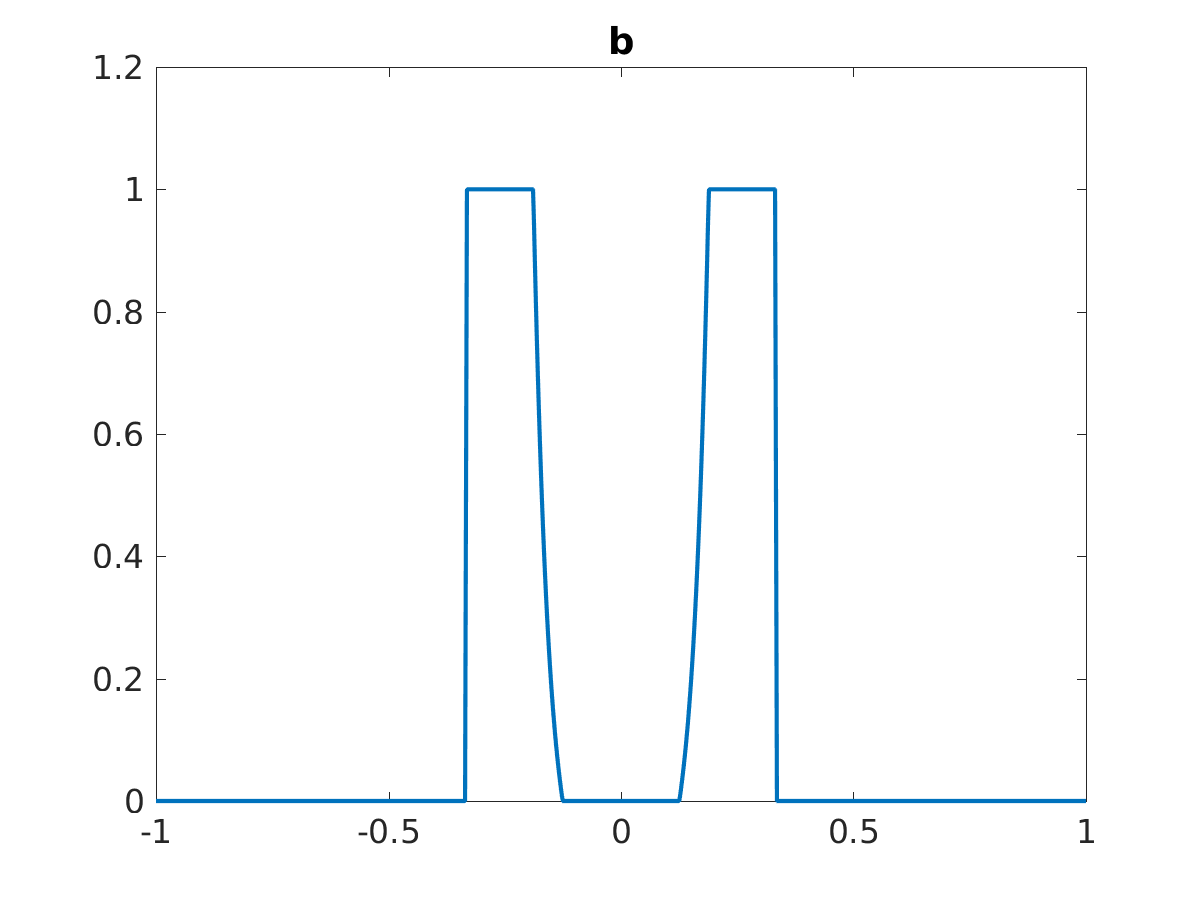}
      \caption{$(c_{11},c_{22}) = (-1.0,-0.5)$}
  \end{subfigure}%
  \begin{subfigure}[t]{0.32\textwidth}
      \centering
      \includegraphics[width=\textwidth]{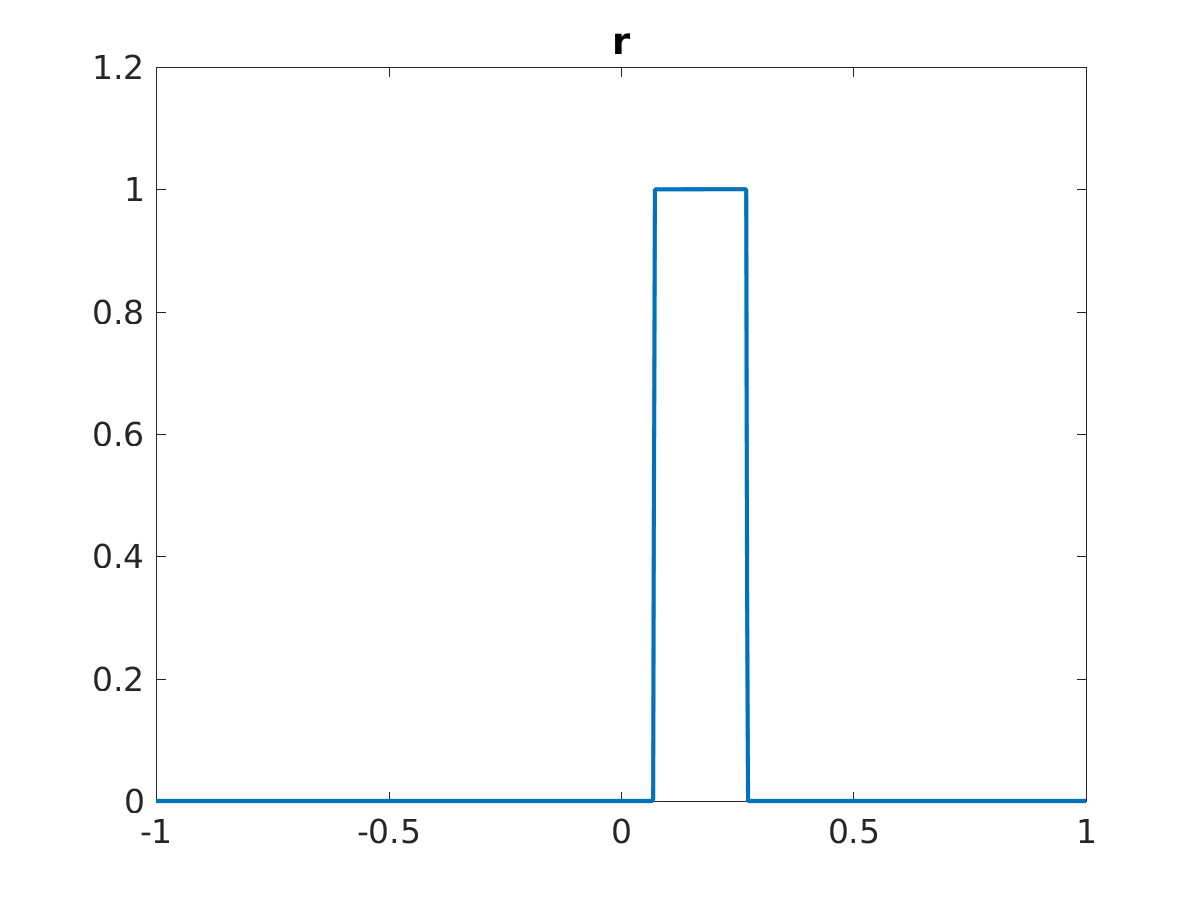}\\
      \includegraphics[width=\textwidth]{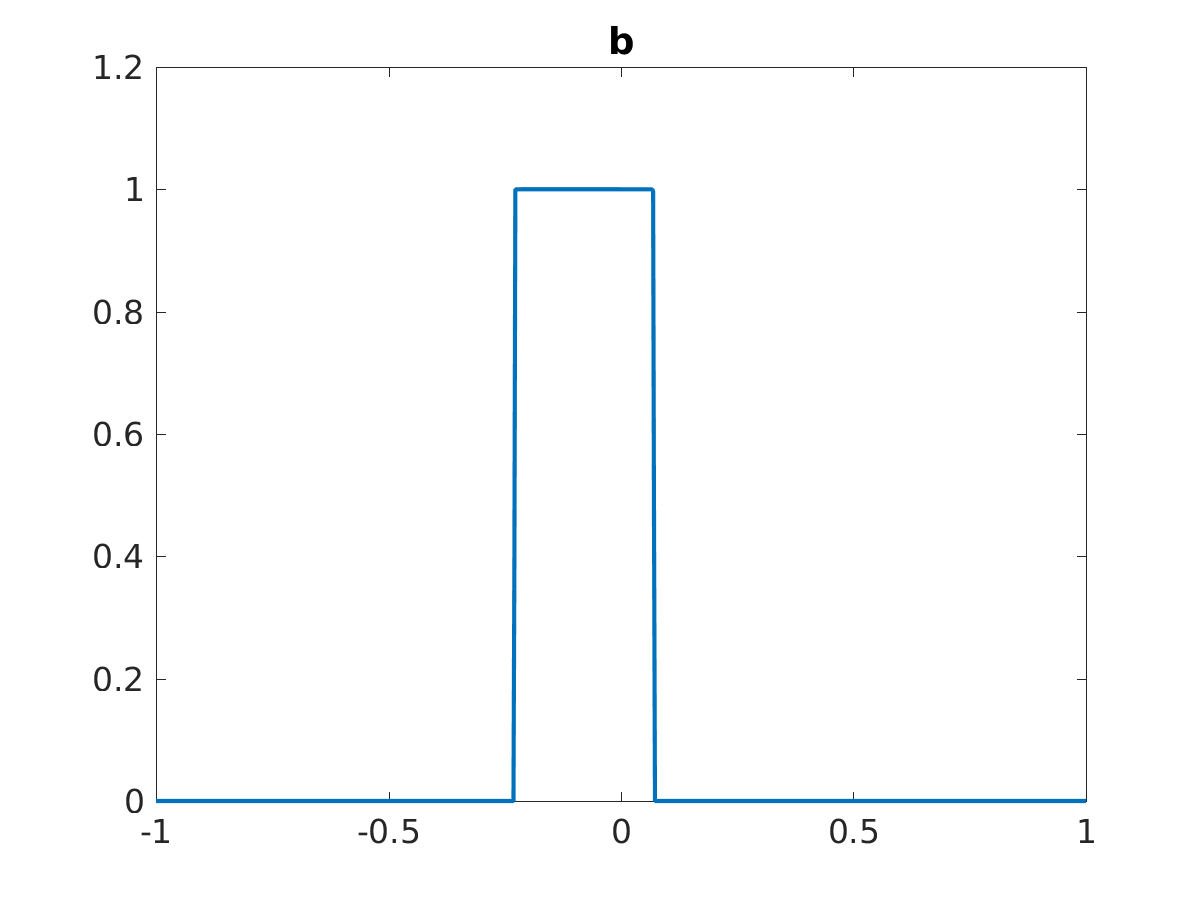}
      \caption{$(c_{11},c_{22}) = (-1.5,-2)$, \\ $m_r=0.2$, $m_b=0.3$}
  \end{subfigure}%
 \end{center}
 \caption{Densities $r$ (top row) and $b$ (bottom row) for the parameters as indicated. In the first two plots, both species have mass $m_r=m_b=1/3$ while on the right, we took $m_r=0.2$ and $m_b=0.3$.}
 \label{fig:epszero1d}
\end{figure} 
While all these examples are symmetric with respect to the origin, we can also obtain the asymmetric cases which appear for either $c_{11}=-1$ and $c_{22} < -1$ and $c_{11} < -1$ or $c_{22} = -1$, see Figure \ref{fig:admmasym}. To break the symmetry, we multiplied the random initial data of $r$ by the positive function $0.3x+1$. For the case $(-1,-2)$, left picture of Figure \ref{fig:admmasym}, we obtain one density surrounded by the other and taking the same initial conditions but values $(-2,1)$, we see that the support of the minimizers separate. This, for $N=2$, corresponds to the situation where the smaller ball touches the boundary of the larger one, see again \cite[Figure 2]{Cicalese2015}. Finally, again for $(-2,1)$ but the random initial data for $b$ multiplied by $x+1$, we obtain a picture anti-symmetric to the first one, shown on the right. Since all initial guesses have the same mass, this also illustrates the non-uniqueness of minimizers.
\begin{figure}
\begin{center}
\begin{subfigure}[t]{0.33\textwidth}
      \centering
      \includegraphics[width=\textwidth]{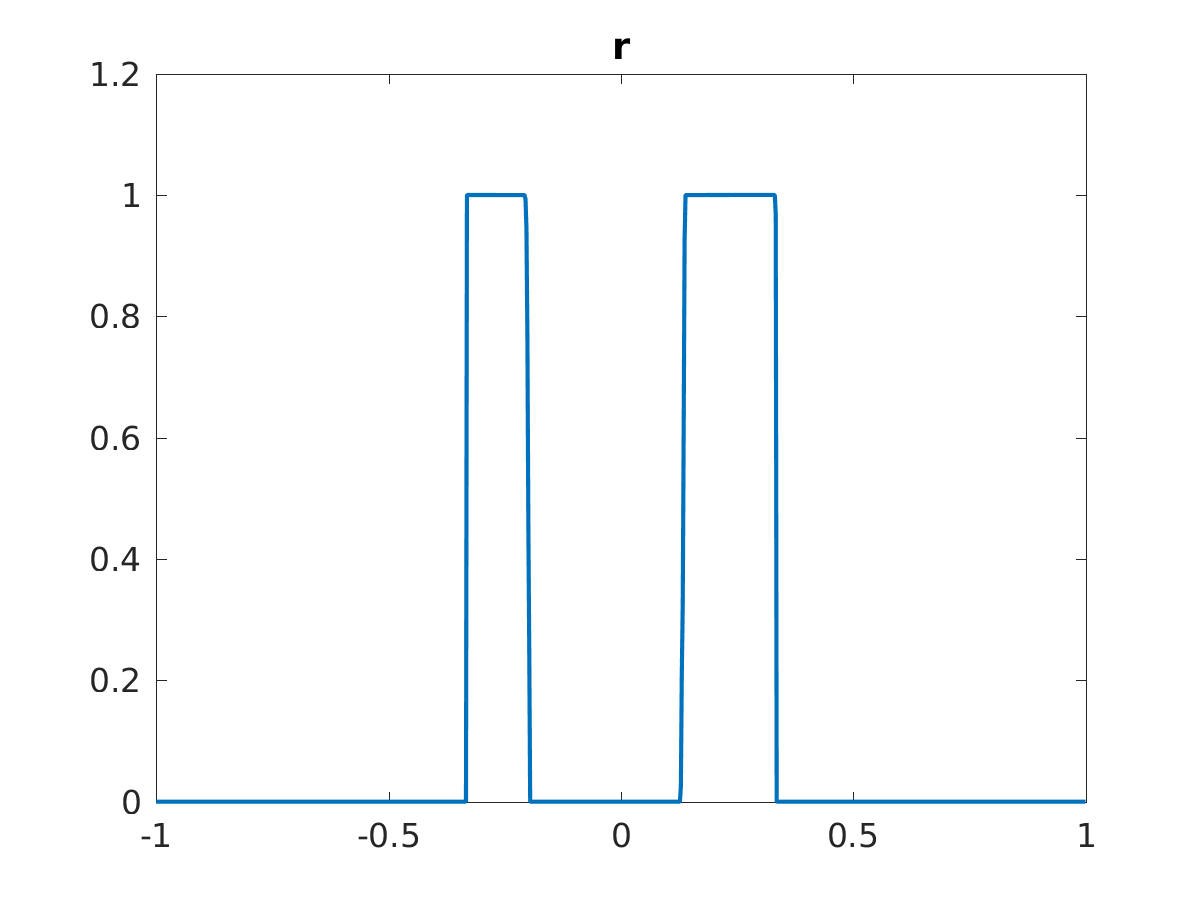}\\
      \includegraphics[width=\textwidth]{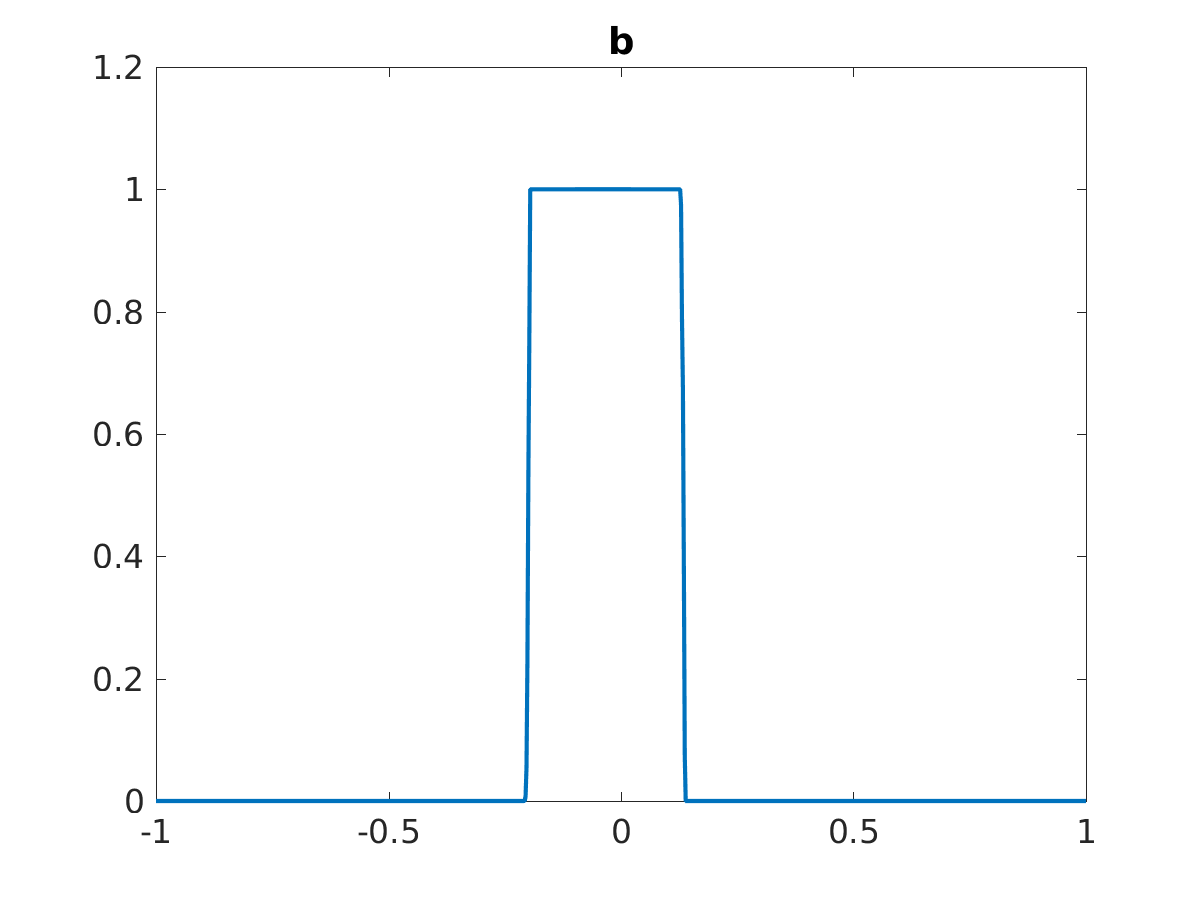}
      \caption{$(c_{11},c_{22}) = (-1,-2)$}
  \end{subfigure}%
  \begin{subfigure}[t]{0.33\textwidth}
      \centering
      \includegraphics[width=\textwidth]{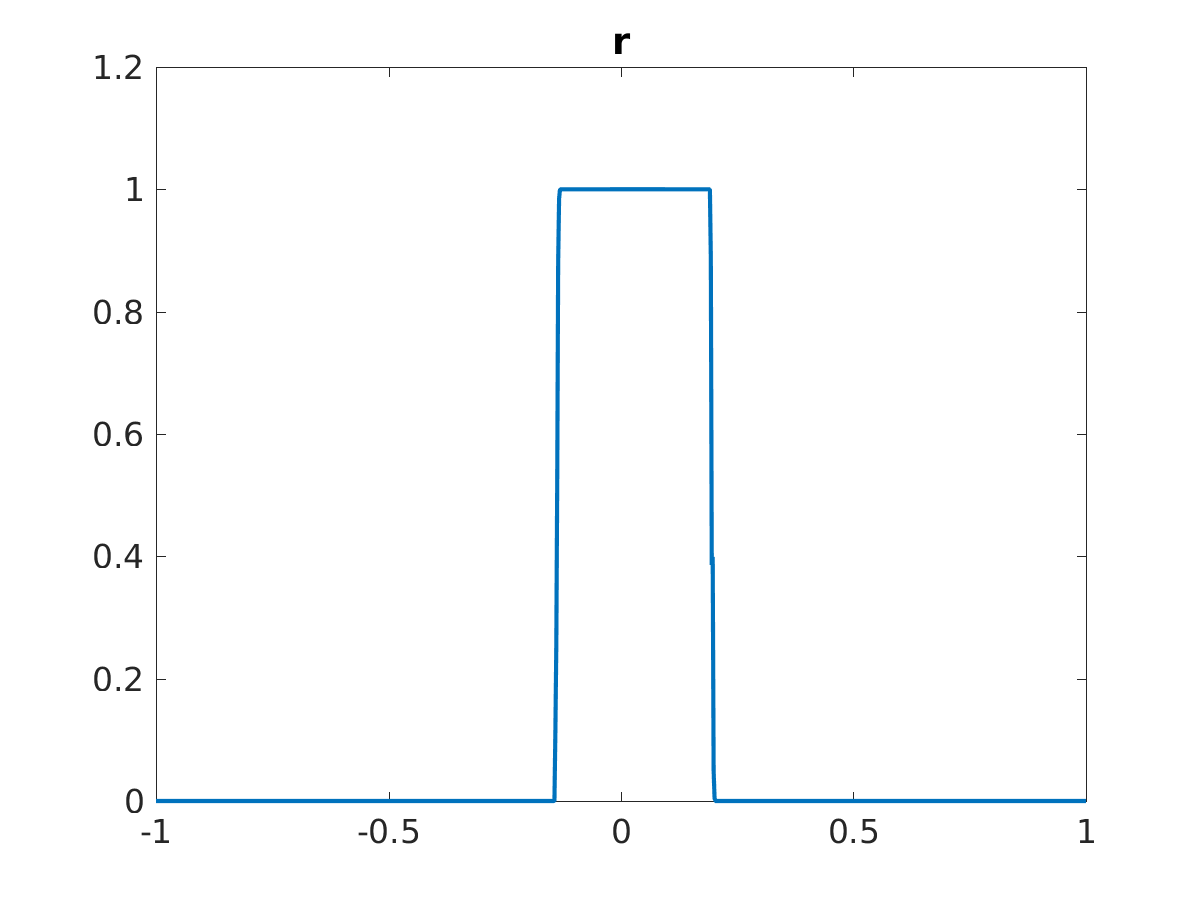}\\
      \includegraphics[width=\textwidth]{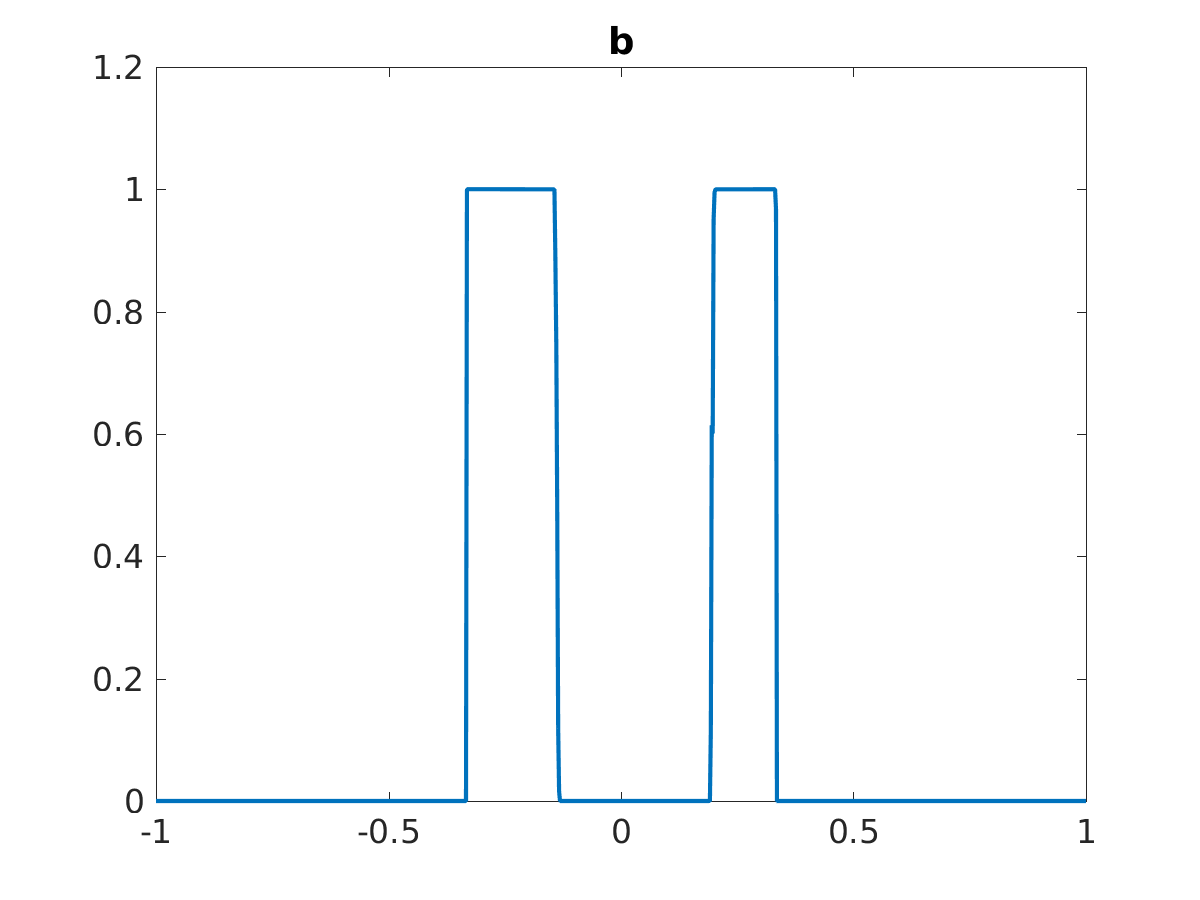}
      \caption{$(c_{11},c_{22}) = (-2,-1)$}
  \end{subfigure}%
  \begin{subfigure}[t]{0.32\textwidth}
      \centering
      \includegraphics[width=\textwidth]{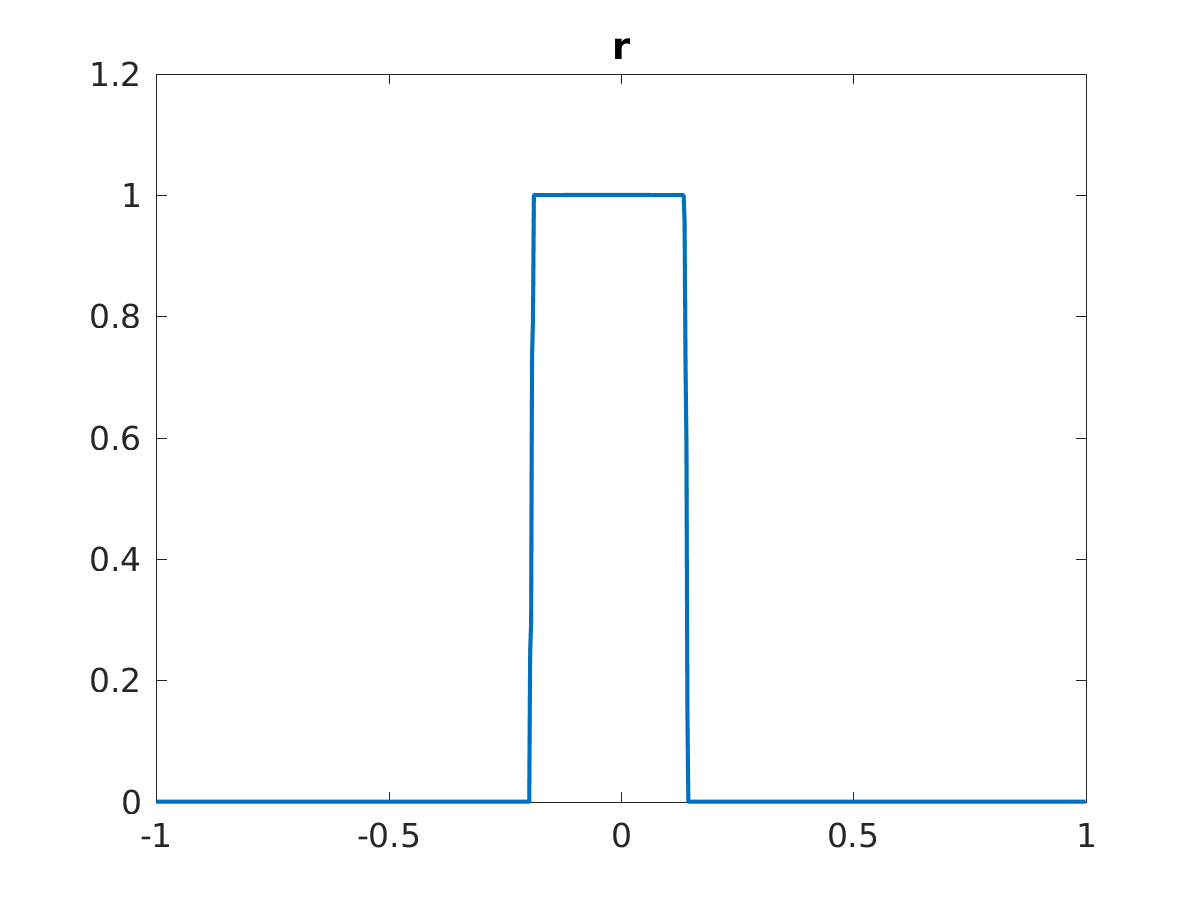}\\
      \includegraphics[width=\textwidth]{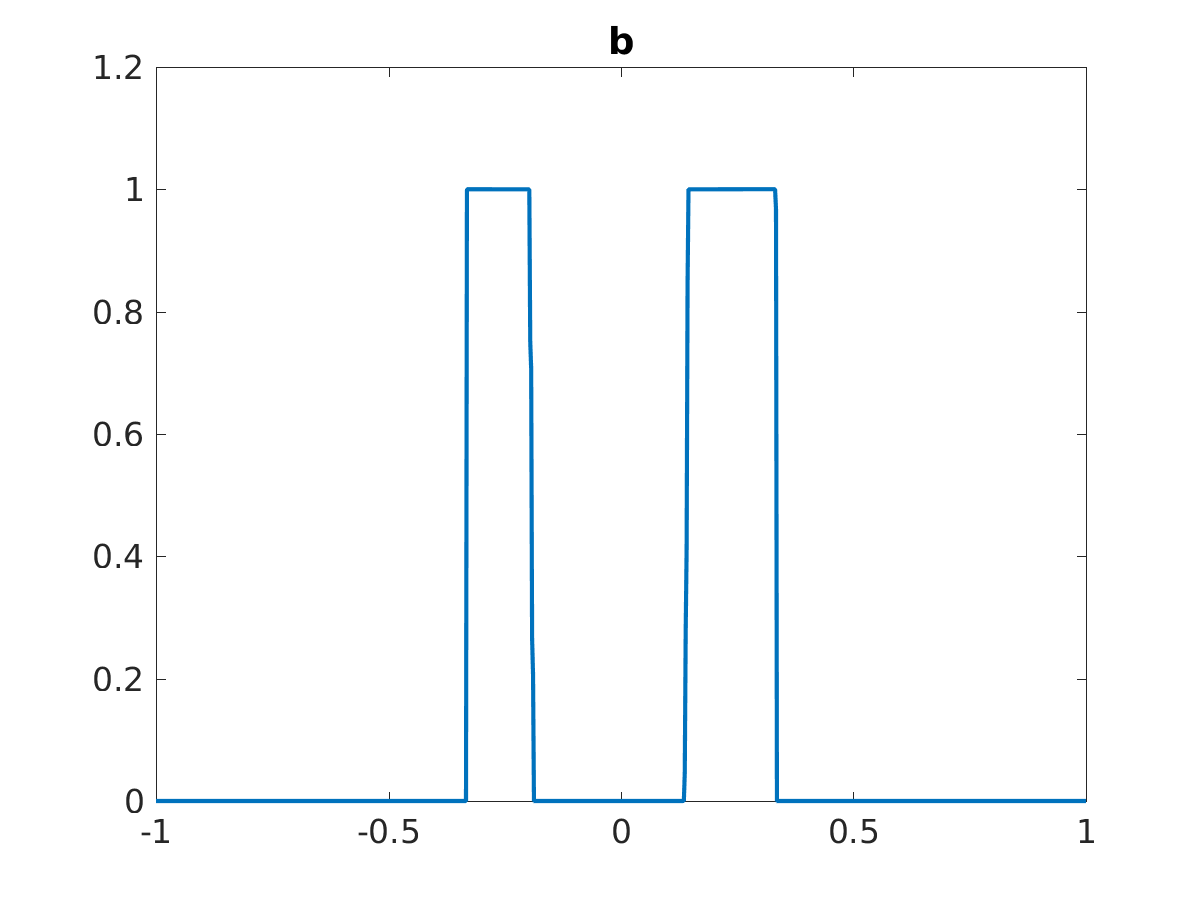}
      \caption{$(c_{11},c_{22}) = (-2,-1)$}
  \end{subfigure}%
  \end{center}
\caption{(a) and (b): Minimizers for random initial data, yet multiplied with $0.3x+1$ for species $r$ and values $(-1,-2)$ and $(-2,-1)$ for $c_{11}$ and $c_{22}$. (c): Again $c_{11}=-2$ and $c_{22}=-1$, but this time the initial datum of $b$ multiplied by $x+1$.}
\label{fig:admmasym}
\end{figure} 
Next, we solve the minimization problem \eqref{eq:minprob} for values $\eps = 0.01,\,0.05,\,0.1$. In Figure \ref{fig:epspos1d}, we compare the corresponding minimizers to the case $\eps = 0$ with the choice $c_{11} = -1$ and $c_{22} = -1.5$. As expected, for small $\eps$, the entropic part of the functional causes a smoothing of the minimizers. For larger $\eps$, the energy becomes convex and we observe mixing without phase seperation as discussed in the introduction. This is also illustrated in Figure \ref{fig:epspos1d}, where the integral of $rb$ over $\Omega$ is evaluated for different values of $\eps$. For small values, a linear increase is observed while for large values, the overlap becomes very large and the integral cannot serve as an indicator for the overlap anymore.
\begin{figure}
\begin{center}
\begin{subfigure}[t]{0.33\textwidth}
      \centering
       \includegraphics[width=\textwidth]{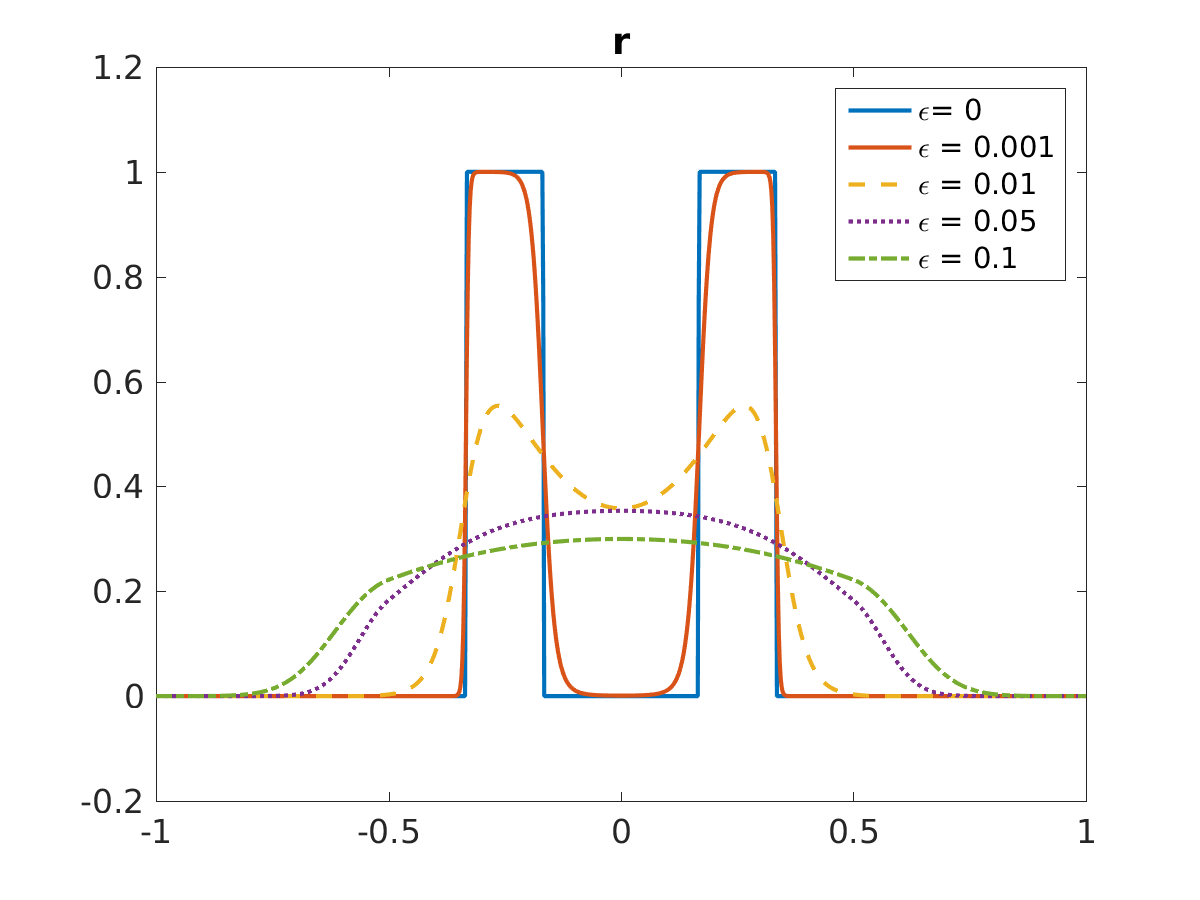}
      \caption{Density $r$}
  \end{subfigure}%
  \begin{subfigure}[t]{0.33\textwidth}
      \centering
      \includegraphics[width=\textwidth]{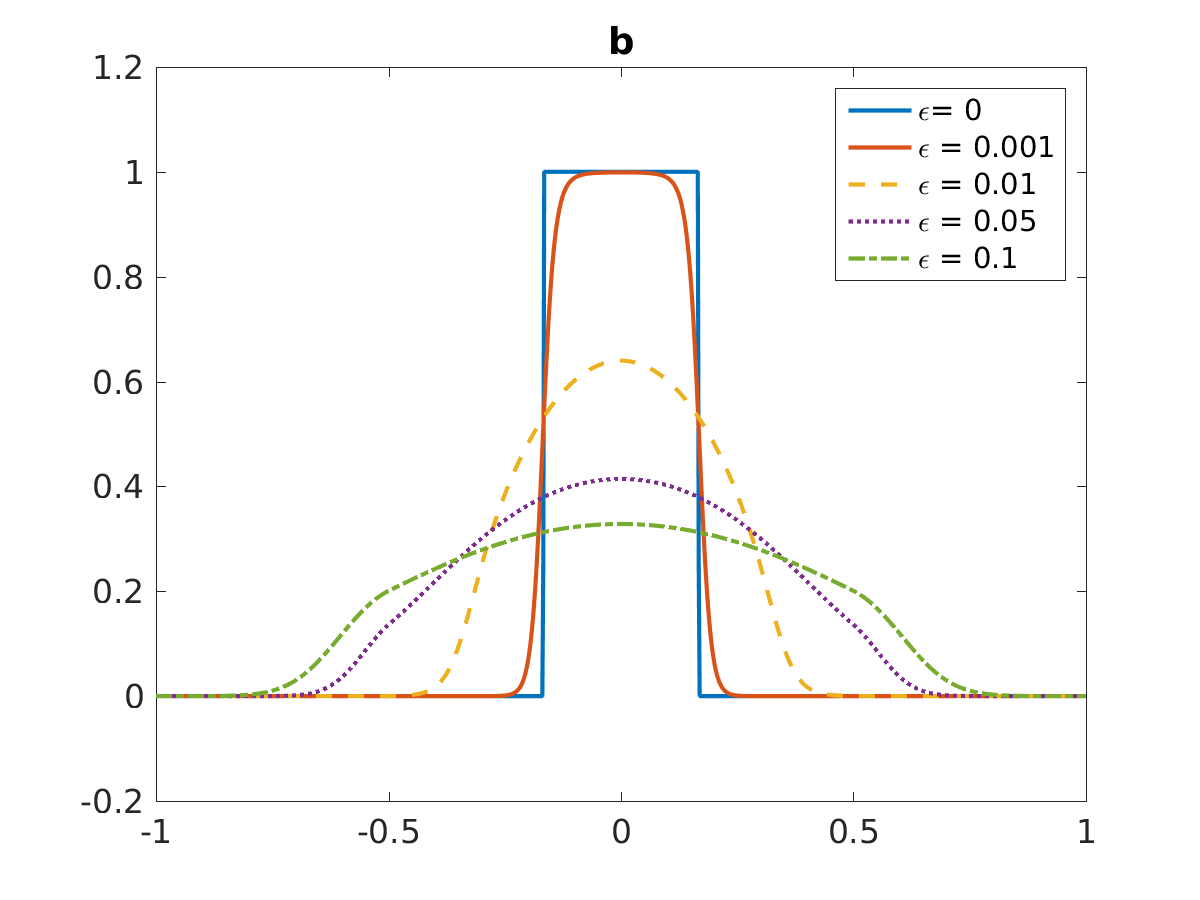}
      \caption{Density $b$}
  \end{subfigure}%
  \begin{subfigure}[t]{0.32\textwidth}
      \centering
      \includegraphics[width=\textwidth]{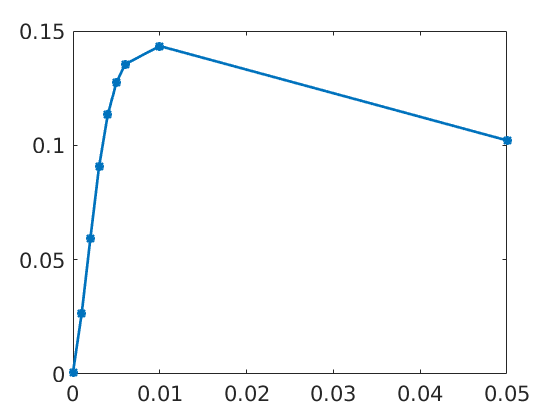}
      \caption{$\int rb\;dx$}
  \end{subfigure}%
 \end{center}
 \caption{Densities $r$ (a) and $b$ (b) computed for values of $\eps = 0,\,0.001,\,0.01,\,0.05,\,0.1$ and with $(c_{11}=-1,c_{22}=-1.5)$. Again, the initial values were taken random. (c): The integral $\int rb\;dx$ with $(c_{11}=-1,c_{22}=-1.5)$ and for $\eps = 0,\, 0.001,\, 0.002,, 0.003,\, 0.004,\, 0.005,\, 0.006,\, 0.01,\, 0.05,\, 0.1$.}
\label{fig:epspos1d}
\end{figure}

\subsection{Examples in Spatial Dimension Two}
Next, we use our algorithm to solve the minimization problem in two spatial dimensions. Again, we first verify that we can reproduce the results of \cite{Cicalese2015}. This is shown in Figure \ref{fig:epszero2d}. We do not show the simulations for different $\eps$ since the results are very similar to the ones in one spatial dimension. The only difference is that we are not able to produce an asymmetric situation. For example, in Figure \ref{fig:epszero2d}, right picture, we have the case  $c_{11}=-1,\,c_{22}=-3$ with $m_r=0.2$, $m_b=0.4$ and asymmetric initial data, yet we obtain a completely symmetric result. The same holds true for simulations with both $c_{11}$ and $c_{22}$ smaller than one.
\begin{figure}
\begin{center}
\begin{subfigure}[t]{0.33\textwidth}
      \centering
      \includegraphics[width=\textwidth]{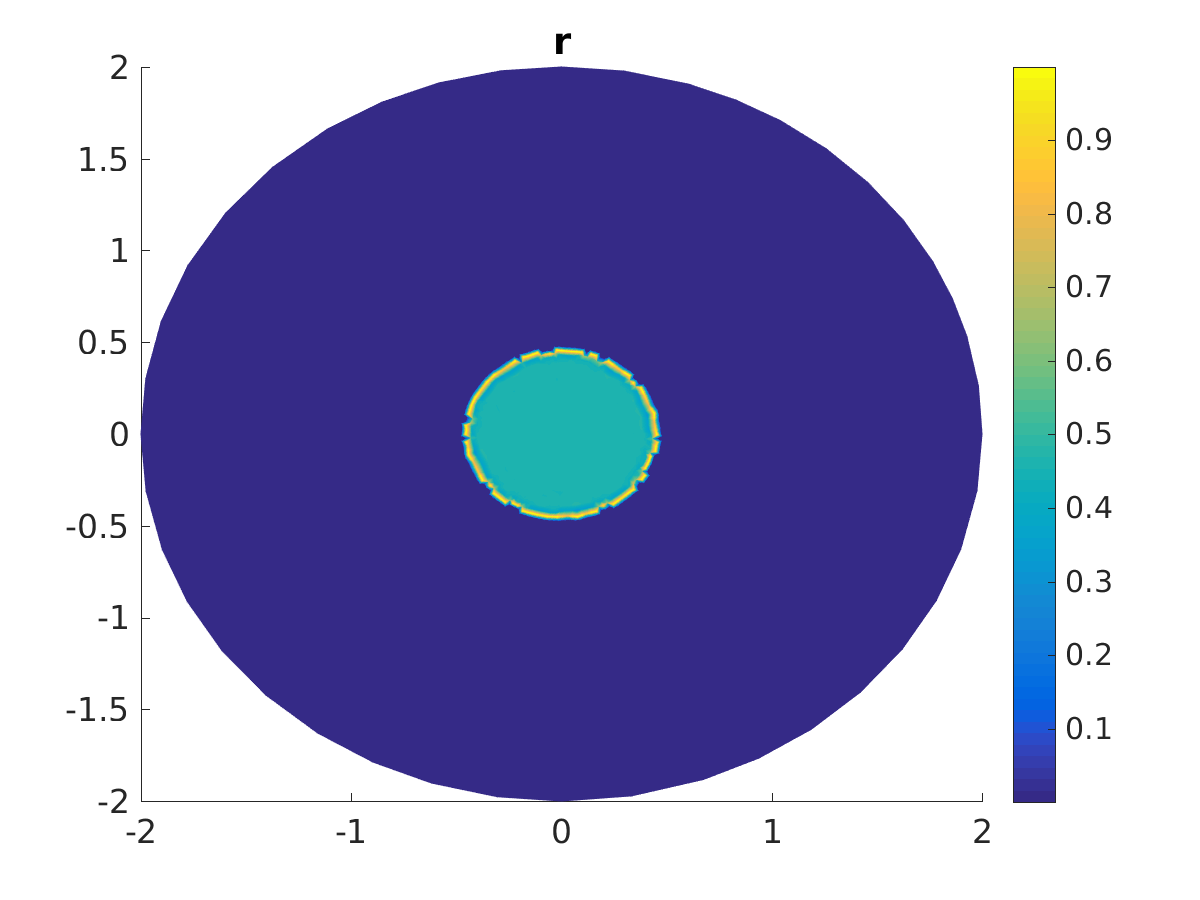}\\
      \includegraphics[width=\textwidth]{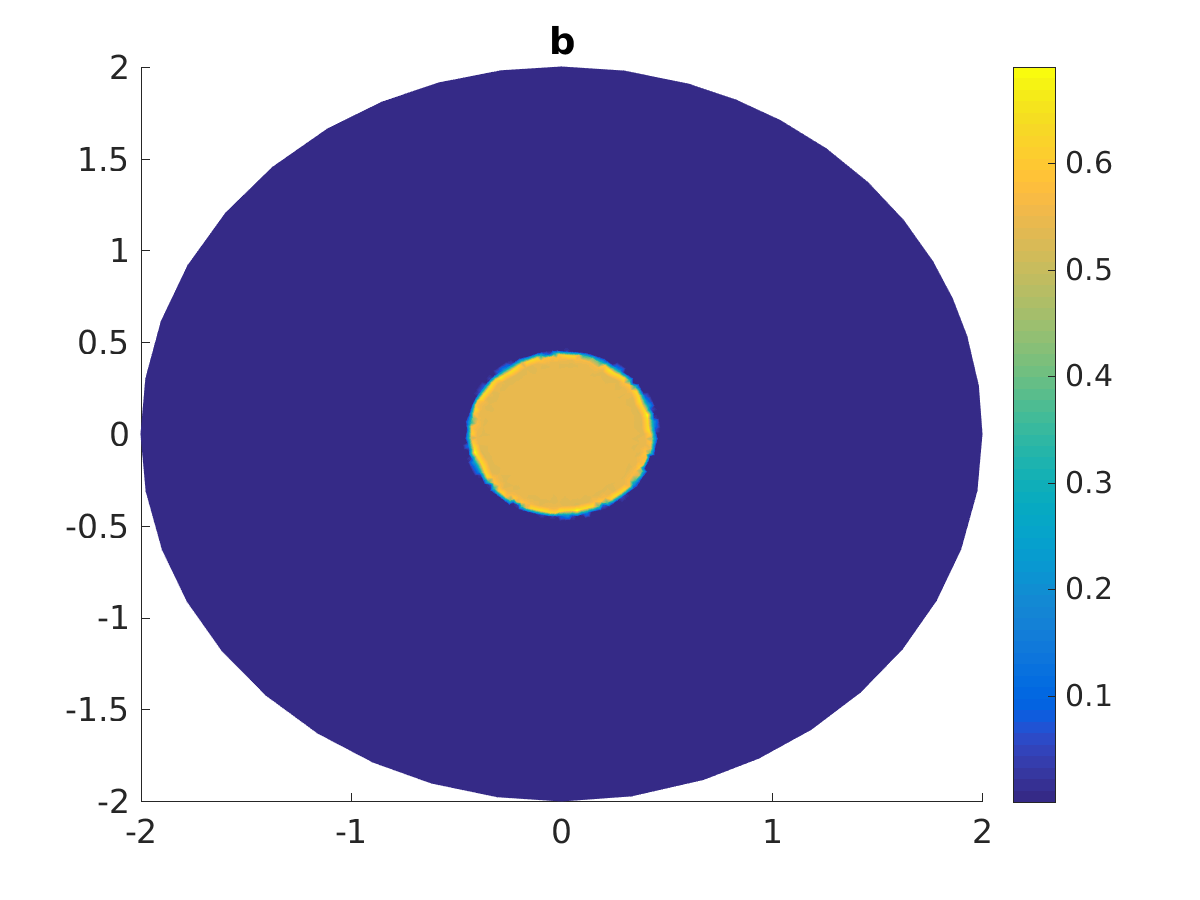}
      \caption{$(c_{11},c_{22}) = (-0.4,-0.5)$}
  \end{subfigure}%
  \begin{subfigure}[t]{0.33\textwidth}
      \centering
      \includegraphics[width=\textwidth]{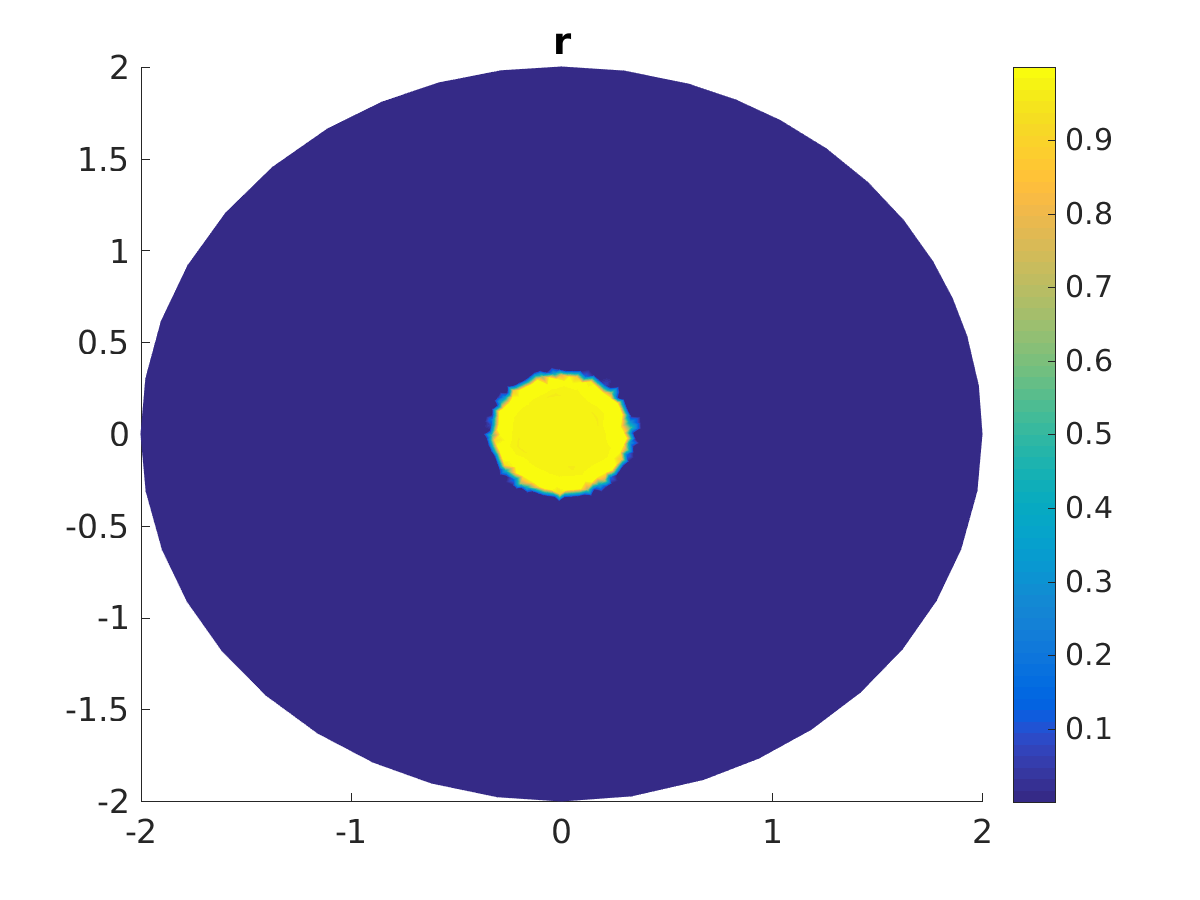}\\
      \includegraphics[width=\textwidth]{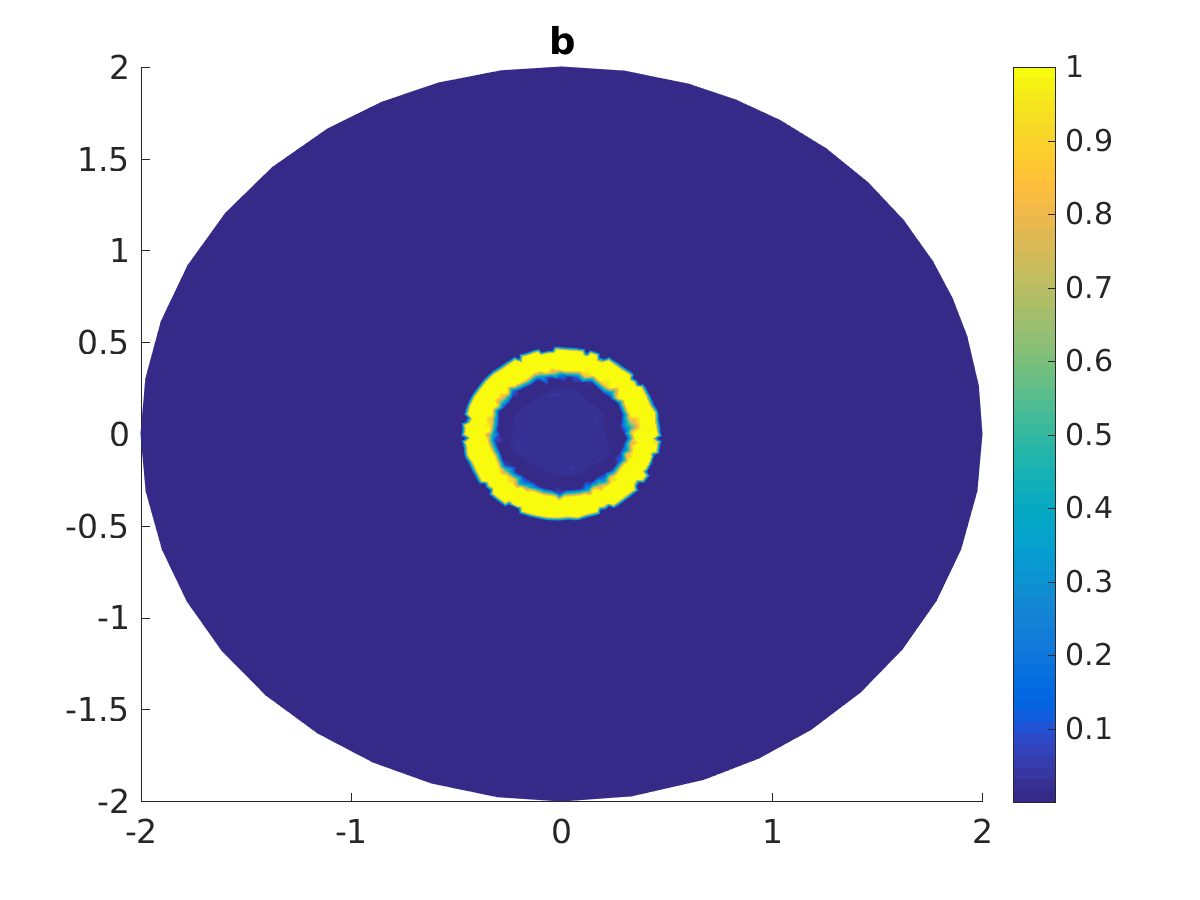}
      \caption{$(c_{11},c_{22}) = (-1,-0.5)$}
  \end{subfigure}%
  \begin{subfigure}[t]{0.32\textwidth}
      \centering
      \includegraphics[width=\textwidth]{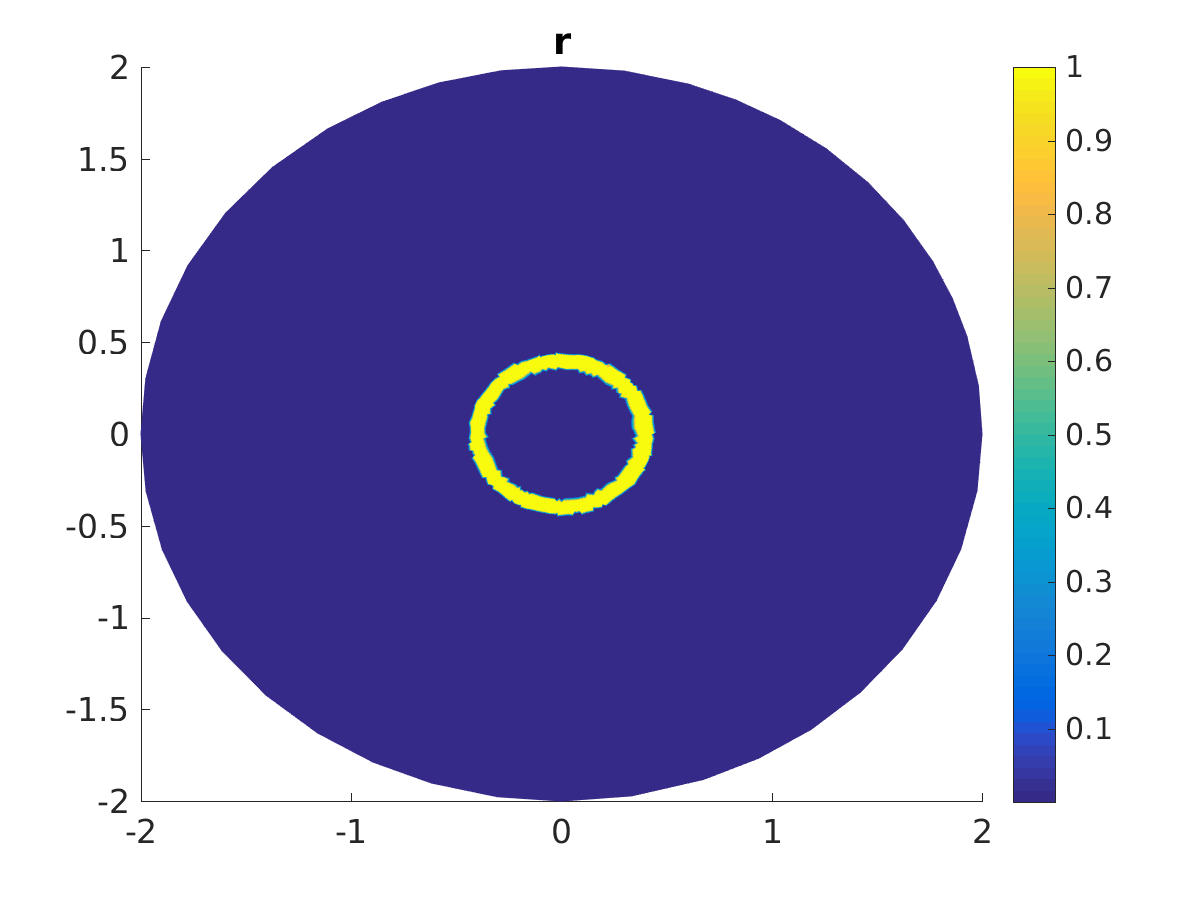}\\
      \includegraphics[width=\textwidth]{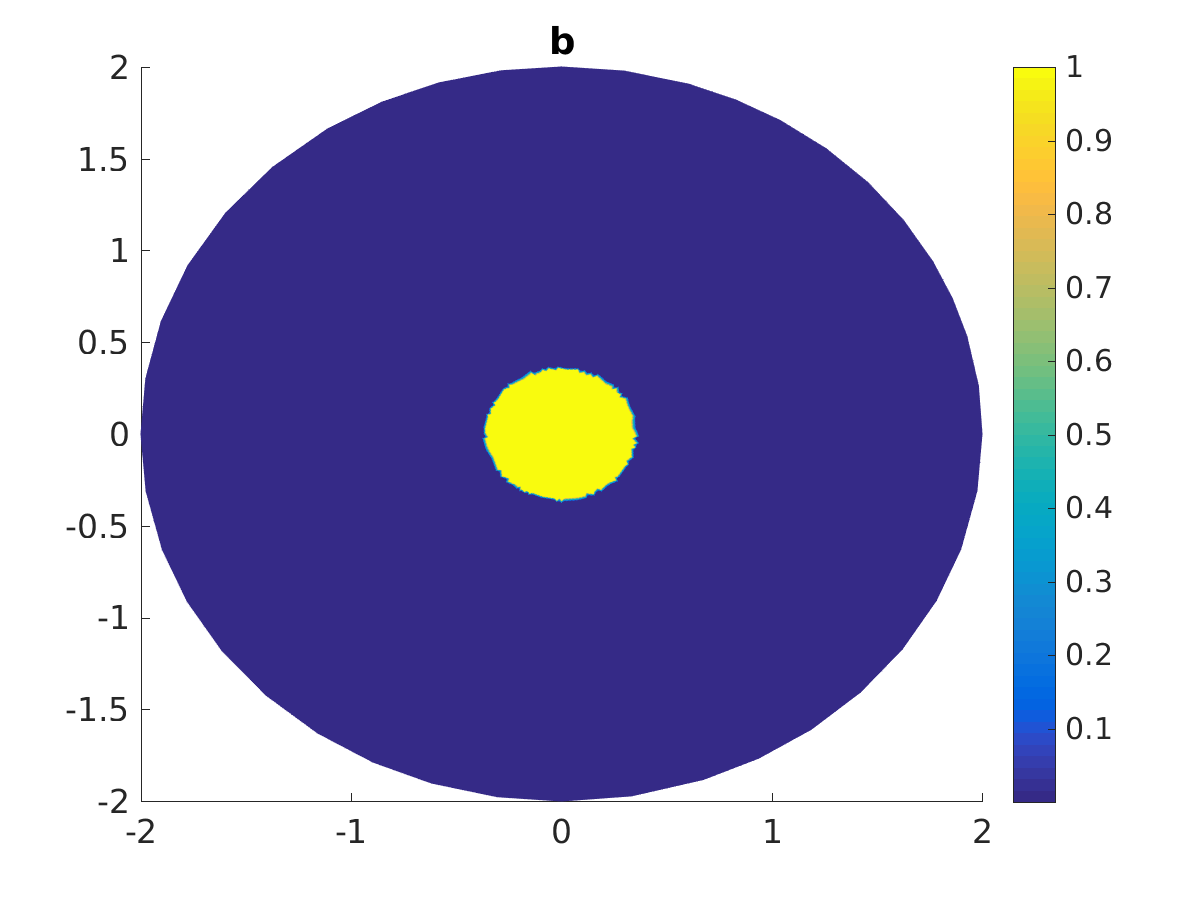}
      \caption{$(c_{11},c_{22}) = (-1,-3)$\\$m_r=0.2$, $m_b=0.4$}
  \end{subfigure}%
 \end{center}
  \caption{(a): $c_{11}=-0.4,\,c_{22}=-0.5$ and (b): $c_{11}=-1,\,c_{22}=-0.5$, again with $m_r=m_b=1/3$. (c): $c_{11}=-1,\,c_{22}=-3$ with $m_r=0.2$, $m_b=0.4$ and asymmetric initidal data.}
  \label{fig:epszero2d}
\end{figure}
More interesting is again the case
\begin{align*}
 c_{11} < -1,\; c_{22} < -1.
\end{align*}
In \cite{Cicalese2015}, an explicit characterization of minimizers in this case is only possible in one spatial dimension. It is, however, known that for $N>1$, minimizers are characteristic functions of sets. By performing simulations, we confirm this results and are also able to characterize minimizers more precisely. In fact, for $c_{11}=c_{22}=-2$ and $m_r=m_b=1/3$, we see that the minimizers are two half balls, Figure \ref{fig:admm2d} (left). Changing the mass of one species, we see that the interface between then changes from a straight line as in the previous case to a curved structure, Figure \ref{fig:admm2d} (second from left). Varying also the constants $c_{11}$ and $c_{22}$, as well as the masses, leads to similar results, see Figure \ref{fig:admm2d} (third from left and right picture).
\begin{figure}
\begin{center}
\begin{subfigure}[t]{0.26\textwidth}
      \centering
      \includegraphics[width=\textwidth]{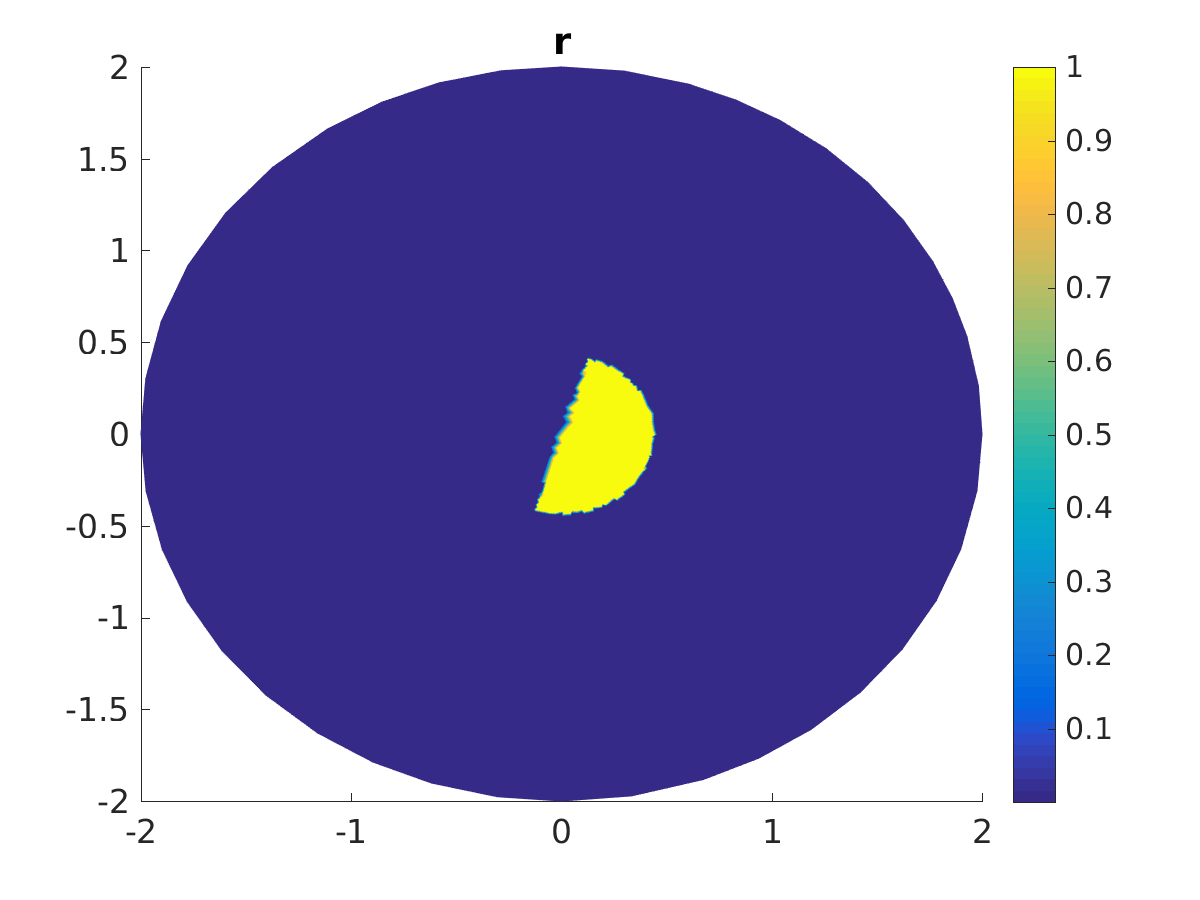}\\
      \includegraphics[width=\textwidth]{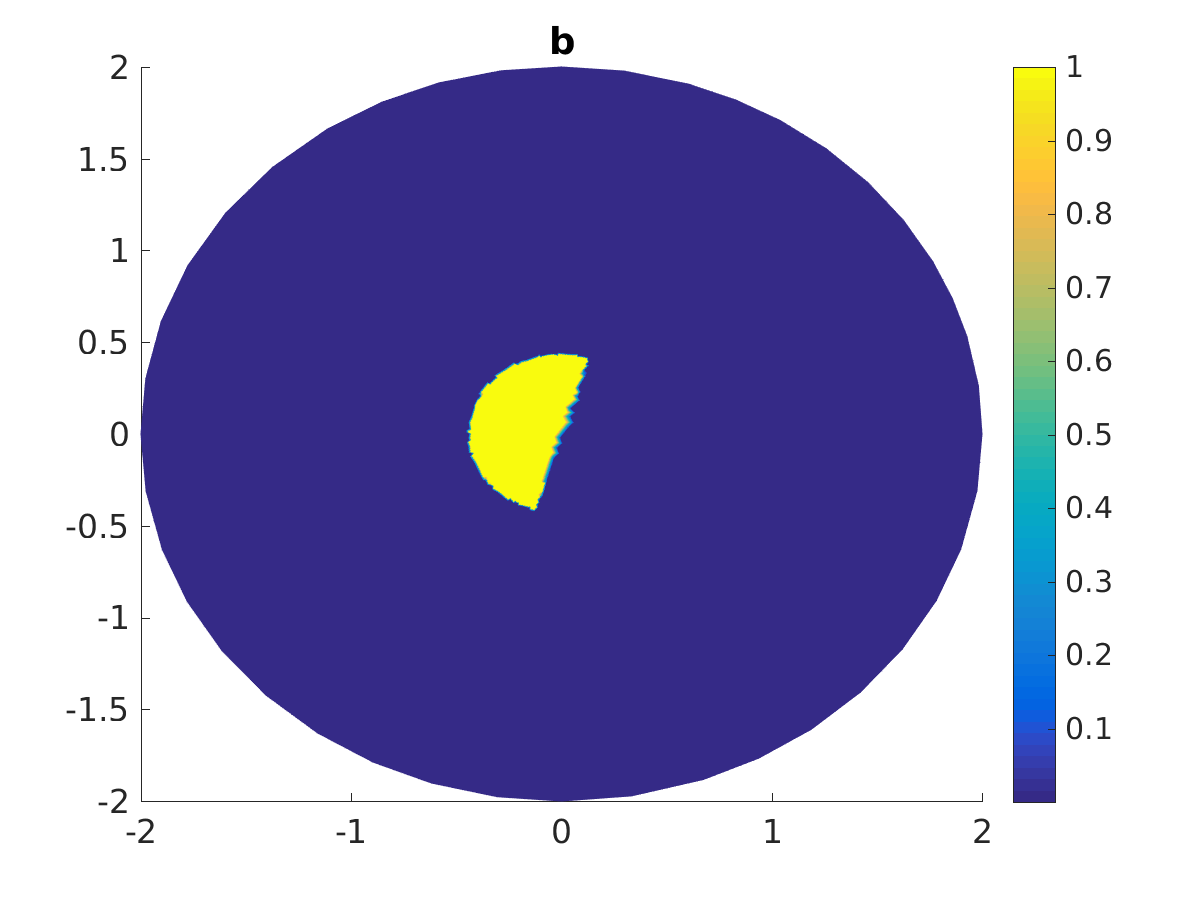}
      \caption{$(c_{11},c_{22}) = (-2,-2)$\\$m_r=0.3$, $m_b=0.3$}
  \end{subfigure}%
  \begin{subfigure}[t]{0.26\textwidth}
      \centering
      \includegraphics[width=\textwidth]{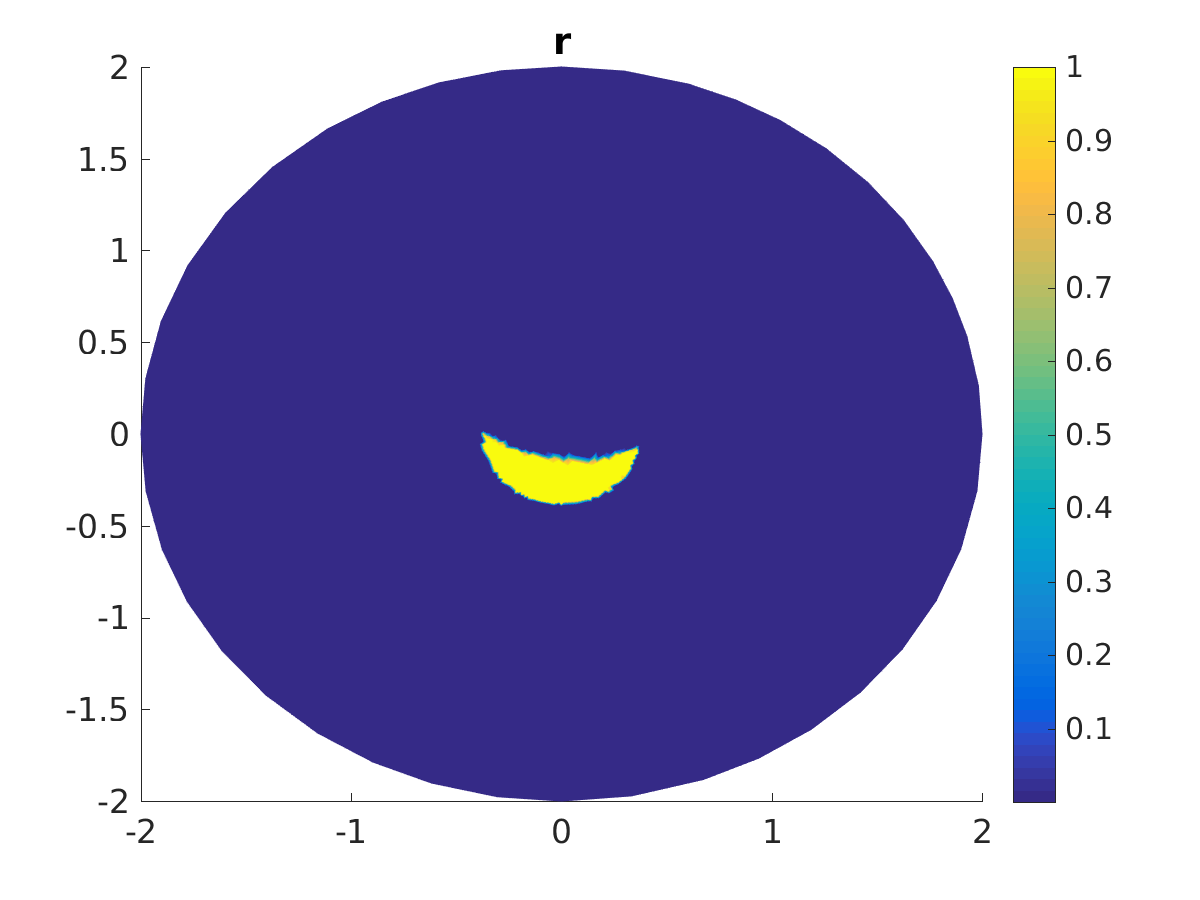}\\
      \includegraphics[width=\textwidth]{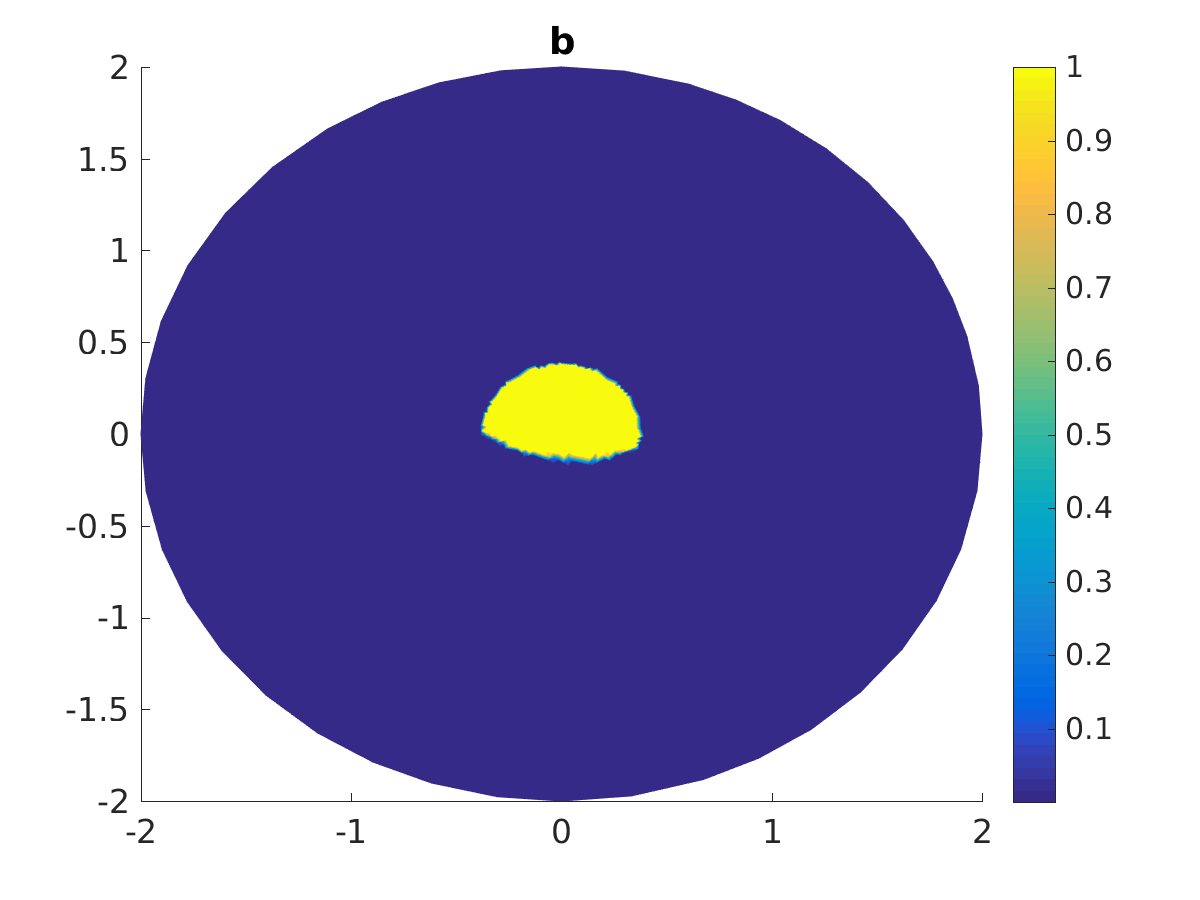}
      \caption{$(c_{11},c_{22}) = (-2,-2)$\\$m_r=0.15$, $m_b=0.3$}
  \end{subfigure}%
  \begin{subfigure}[t]{0.26\textwidth}
      \centering
      \includegraphics[width=\textwidth]{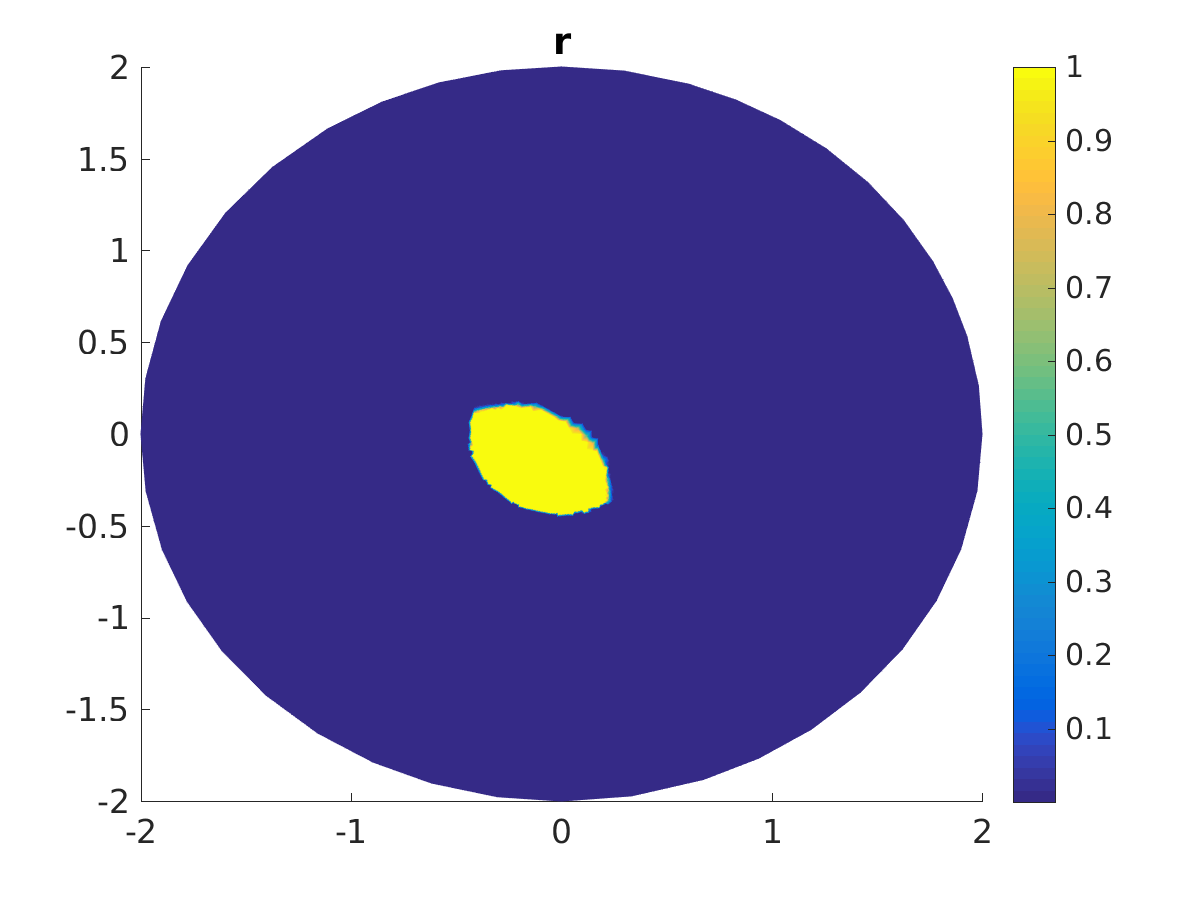}\\
      \includegraphics[width=\textwidth]{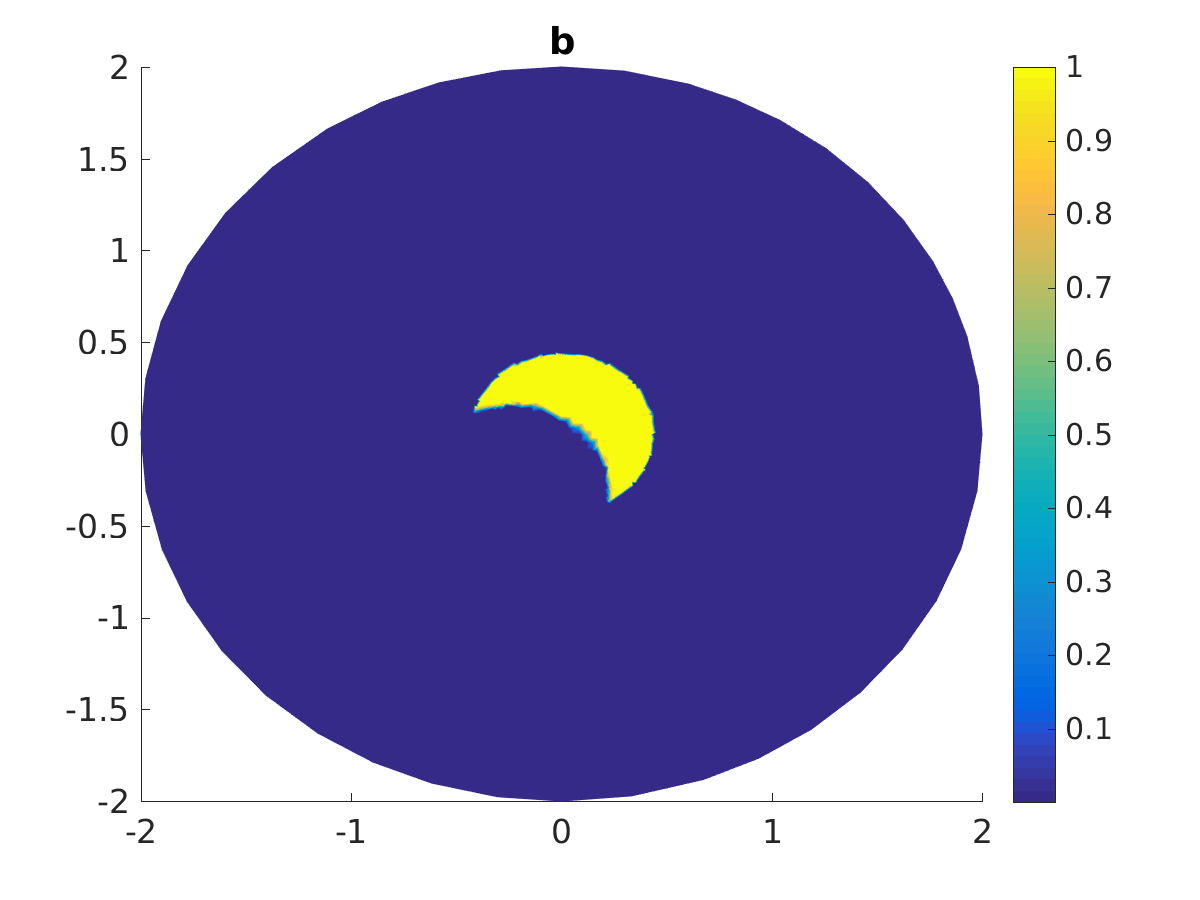}
      \caption{$(c_{11},c_{22}) = (-2,-1.5)$\\$m_r=0.3$, $m_b=0.3$}
  \end{subfigure}%
  \begin{subfigure}[t]{0.26\textwidth}
      \centering
      \includegraphics[width=\textwidth]{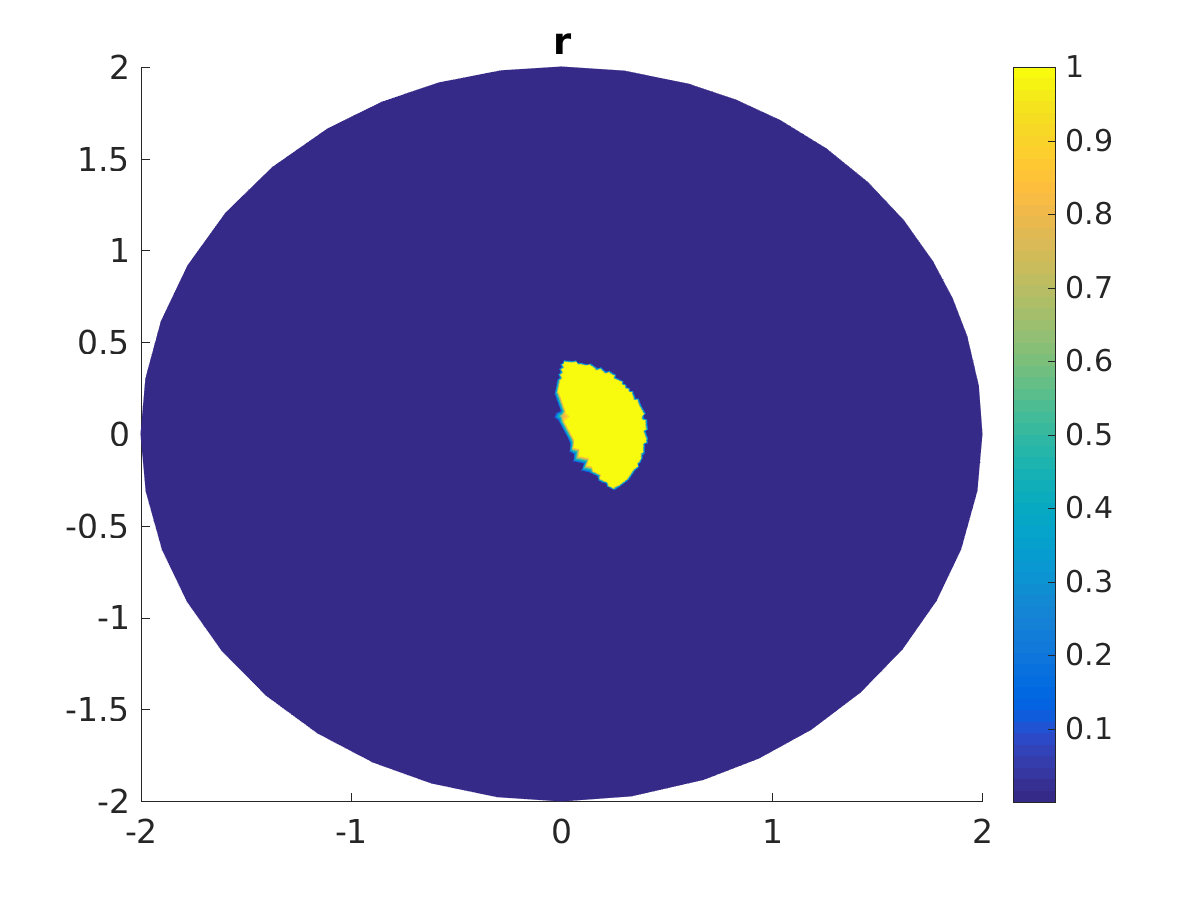}\\
      \includegraphics[width=\textwidth]{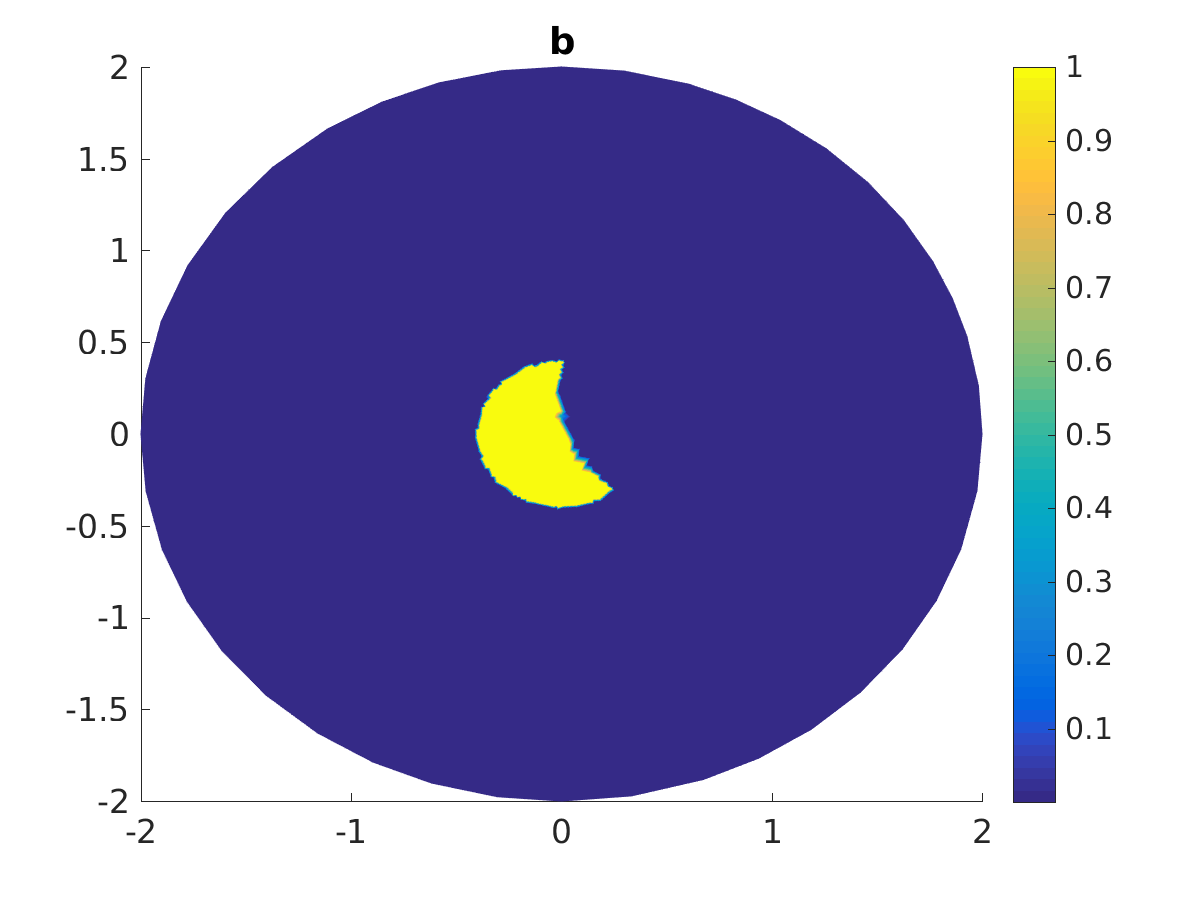}
      \caption{$(c_{11},c_{22}) = (-2,-1.5)$\\$m_r=0.2$, $m_b=0.3$}
  \end{subfigure}%
 \end{center}
 \caption{The cases $c_{11}=-2,\,c_{22}=-2$ with $m_r=m_b=0.3$ (a) and $m_r=0.15$ and $m_b=0.3$ (b) and the cases $c_{11}=-2,\,c_{22}=-1.5$ with $m_r=m_b=0.3$ (c) and $m_r=0.2$ and $m_b=0.3$ (d).}
 \label{fig:admm2d}
\end{figure}

It is instructive to reinterpret the last results in view of \eqref{eq:partitioning}. In the second column of Figure \ref{fig:epszero2d} we find the situation $d_{RV} > d_{RB} > d_{BV}$. Hence it is energetically favourable to avoid the interface between red and void, and put the largest interface between blue and void, which leads to an isoperimetric problem for the overall density. The partioning inside is simply obtained by a local isoperimetric problem for the red phase. The third column can be equally explained with the setting $d_{BV} > d_{RB} > d_{RV}$. The situation in Figure \ref{fig:admm2d} is more subtle. Here $d_{RV}$ and $d_{BV}$ are expected to be lower than $d_{BV}$ but the difference seems not large enough that a separation into two disjoint structures is favourable. On the other hand for $d_{RB}$ large it is not favourable to create a ring as in Figure \ref{fig:epszero2d}, but there is a local partitioning inside the ball leading to a smaller perimeter of the interface. We mention that numerical simulations with very large absolute values of the $c_{ii}$ yield similar results, hence it is not clear whether the case of $d_{RB}$ so large that it is favourable to create two separate structures for red and blue can be obtained as an asymptotic from our model. 

\section{Transient Model}\label{sec:PDE}
As described in the introduction, minimizers of \eqref{eq:Feps} are closely related to solutions of systems of nonlinear cross-diffusion partial differential equations \cite{Burger2010}. In fact, introducing the nonlinear diagonal mobility tensor ${\bf M}(r,b)$ given by
\begin{equation*}
{\bf M}(r,b) = \left(\begin{matrix}
r(1-\rho) & 0 \\ 0 & Db (1-\rho)
\end{matrix}\right)
\end{equation*}
we can introduce the following formal gradient flow with respect to $E^\eps$
\begin{equation}
	\partial_t \left( \begin{array}{l} r \\ b \end{array} \right) = \nabla \cdot
\left({\bf M}(r,b) {\bf :} \nabla \left( \begin{array}{l} \partial_r E^\eps(r,b) \\ \partial_b E^\eps(r,b) \end{array} \right)\right). \label{eq:gradflow0}
\end{equation}
Inserting the definition \eqref{eq:FC} of $E^\eps$, we obtain \eqref{eq:crossdiffusion1}-\eqref{eq:crossdiffusion2}.
This system is set on a domain $\Omega\subset\RR^n$ and for $t\in [0,T]$. If $\Omega$ is bounded, we impose the following no-flux boundary conditions
\begin{align}\label{eq:noflux}
	n \cdot( (1-\rho) \nabla r + r \nabla \rho + r(1-\rho)\left[ \nabla (c_{11}K\ast r - K\ast b) + \nabla V\right]) &= 0,\\\nonumber
	n \cdot(  (1-\rho) \nabla b + b \nabla \rho + b(1-\rho)\left[\nabla (c_{22}K\ast b - K\ast r) + \nabla V\right]) &= 0,
\end{align}
on $\partial \Omega \times (0,T)$. This system is further supplemented by initial values
\begin{equation}
	r(\cdot,0) = r_0, \quad b(\cdot,0) = b_0 \text{ in } \Omega.
\end{equation}
Using the functional $\eps F^E+F^C$, we introduce \emph{entropy variables}, given as the first derivative with respect to $r$ and $b$
\begin{align}
	u := \partial_r (\eps F^E+F^C) &=\eps (\log r - \log(1-\rho))  + V, \label{eq:etar} \\
	v := \partial_b (\eps F^E+F^C) &=\eps( \log b - \log(1-\rho))  + V. \label{eq:etab}
\end{align}
Inverting these relations yields
\begin{align}\label{eq:etainv}
r= \frac{e^{\frac{u-V}{\eps}}}{1+e^{\frac{u-V}{\eps}}+e^{\frac{v-V}{\eps}}},\quad b= \frac{e^{\frac{v-V}{\eps}}}{1+e^{\frac{u-V}{\eps}}+e^{\frac{v-V}{\eps}}}. 
\end{align}
\subsection{Existence of weak solutions}
For further use we define the set $\mathcal{M}$
\begin{align*}
\mathcal{M}=\{(r,\, b) \in [L^2(\Omega), \RR^+]^2 : 0 < r,\, b; \, r+b=\rho < 1\text{ a.e.} \}.
\end{align*}
Global existence is guaranteed by the following theorem
\begin{thm}\label{thm:pdebnd}
Let $T>0$, $\eps>0$ and $D>0$. Consider the following partial differential equations on an open and bounded Lipschitz domain $\Omega \subset \mathbb{R} ^N$ 
\begin{align}\label{eq:pdebnd1}
 \partial_t r&= \nabla \cdot (\eps [(1-\rho)\nabla r +r \nabla \rho ]+r(1-\rho)\nabla V+ r(1-\rho)\nabla(c_{11}K\ast r - K\ast b)) \\\label{eq:pdebnd2}
\partial_t b&=  \nabla \cdot (\eps D [(1-\rho)\nabla b +b \nabla \rho ]+b(1-\rho)\nabla V+ Db(1-\rho)\nabla(c_{22}K\ast b - K\ast r))
\end{align}
for $V$ satisfying \textbf{(V1)}, $c_{ij}\leq 0$ for $i,\, j \in \{1,2\}$ and $K$ as in definition \eqref{def:admissible}. The system is equipped with initial conditions 
\begin{align*}
r(\cdot,\, 0)=r_0 \quad \text{and} \quad b(\cdot,\, 0)=b_0 
\end{align*}
with $(r_0,b_0)\in\mathcal{M}$ and no flux boundary conditions \eqref{eq:noflux}.
Then there exists a weak solution $(r,\, b)$ in 
$$W=(L^2((0,T),\, L^2(\Omega))\cap H^1((0,\, T),\, H^{-1}(\Omega)))^2$$ such that 
$$\rho,\, \sqrt{1-\rho}r,\, \sqrt{1-\rho}b \in L^2((0,\,T),\, H^1(\Omega))$$
and furthermore $0 \leq r,\, b$ and $ \rho \leq 1$ almost everywhere.
\end{thm}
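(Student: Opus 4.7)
The plan is to use the boundedness-by-entropy principle of J\"ungel together with the gradient-flow structure of \eqref{eq:gradflow0} and a semi-implicit time discretization with regularizing terms in both the primal variables $(r,b)$ and the dual (entropy) variables $(u,v)$, as advertised in the introduction. The point of the entropy-variable reformulation \eqref{eq:etar}--\eqref{eq:etab} is that the inverse relations \eqref{eq:etainv} automatically send any $(u,v)\in\RR^2$ into the admissible simplex $\{(r,b):r,b>0,\ r+b<1\}$, so the upper and lower bounds on the densities are built into the parametrization and need not be recovered by maximum-principle arguments, which anyway fail for cross-diffusion systems of this type. Moreover, the mobility $\mathbf{M}(r,b)$ composed with the Hessian of $F^E$ yields a symmetric positive-semidefinite diffusion matrix in the new variables, which is precisely the algebraic structure needed for the a priori estimates.

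At each time step, given $(r^{k-1},b^{k-1})\in\M$, I would seek $(u^k,v^k)\in H^m(\Omega)^2$ with $m>N/2+1$ (so that $H^m\hookrightarrow W^{1,\infty}$) solving the weak form of the implicit Euler scheme supplemented by an elliptic regularization $\delta(\cdot,\cdot)_{H^m}$ on the dual variables and a small primal damping $\sigma(r^k-r^{k-1},\cdot)$; the densities $r^k,b^k$ are recovered from $(u^k,v^k)$ via \eqref{eq:etainv}. Existence for this regularized discrete problem follows from a Leray-Schauder fixed-point argument in which one freezes the mobility and the nonlocal terms, solves the resulting coercive linear elliptic system for $(u^k,v^k)$, recovers $(r^k,b^k)$ and iterates to a fixed point. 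Testing the $r$-equation against $u^k$ and the $b$-equation against $v^k$, summing and exploiting the convexity of $F^E$, gives the discrete entropy inequality
\begin{align*}
F^\eps(r^k,b^k)+\tau\int_\Omega \nabla(u^k,v^k)^\top\,\mathbf{M}(r^k,b^k)\,\nabla(u^k,v^k)\,dx \le F^\eps(r^{k-1},b^{k-1})+\tau\,C_{\mathrm{nonloc}},
\end{align*}
where the nonlocal contribution is absorbed by Young's inequality using $K\in W^{1,1}_{loc}$ together with the uniform $L^\infty$-bounds on $r,b,\rho$. Unfolding the quadratic form with the explicit expression of $\mathbf{M}$ delivers the $L^2(0,T;L^2(\Omega))$ bounds on $\sqrt{1-\rho}\,\nabla r$, $\sqrt{1-\rho}\,\nabla b$ and hence on $\nabla\rho=\nabla r+\nabla b$, uniformly in $\tau,\delta,\sigma$.

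I would then send $\delta,\sigma\to 0$ at fixed $\tau$ using weak compactness of the regularized scheme and continuity of the nonlinearities, and subsequently $\tau\to 0$. Piecewise-constant time interpolants $r_\tau,b_\tau$ are constructed, and the combination of the entropy-dissipation bounds with control of the discrete time derivative in $(H^m(\Omega))^*$ (obtained by testing the weak formulation against $H^m$ functions) allows an application of the Aubin-Lions lemma, yielding strong $L^2((0,T)\times\Omega)$ convergence of $\rho_\tau$ and, together with the uniform $L^\infty$-bounds, of $r_\tau,b_\tau$ themselves. The main obstacle is the passage to the limit in the degenerate fluxes $r(1-\rho)\nabla[\,\cdot\,]$: the prefactor $r(1-\rho)$ vanishes on the boundary of the simplex, so identifying the weak limit requires carefully pairing the strongly convergent amplitudes $r_\tau,\sqrt{1-\rho_\tau}$ with the merely weakly $L^2$-convergent gradients $\sqrt{1-\rho_\tau}\,\nabla r_\tau$, $\nabla\rho_\tau$ and with the nonlocal factors $\nabla(K\ast r_\tau),\nabla(K\ast b_\tau)$; the latter converge strongly in $L^2$ by Young's convolution inequality since $K,\nabla K\in L^1_{loc}$ and $r_\tau,b_\tau$ are strongly $L^2$-convergent. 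All nonlinear products in the weak formulation then pass to the limit and yield a weak solution in the stated class, while $0\le r,b$ and $\rho\le 1$ are inherited for free from the entropy parametrization \eqref{eq:etainv}.
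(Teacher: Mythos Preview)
Your overall strategy---entropy-variable parametrization via \eqref{eq:etainv}, implicit Euler with elliptic regularization in the dual variables, Leray--Schauder for the discrete steps, a discrete entropy inequality from testing with $(u^k,v^k)$ and convexity of $F^E$, then Aubin--Lions---is essentially the paper's route. The paper uses an $H^1$-regularization $\tau(\Delta u_k-u_k)$ tied to the time step (so a single limit $\tau\to 0$ removes both discretization and regularization), whereas you regularize in $H^m$, $m>N/2+1$, with an independent parameter $\delta$ and an additional primal damping $\sigma$; these are technical variations and neither the $H^m$-regularization nor the primal damping is needed here, but they do no harm.

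There is, however, a genuine gap in your compactness step. You assert that strong $L^2$ convergence of $\rho_\tau$ ``together with the uniform $L^\infty$-bounds'' yields strong $L^2$ convergence of $r_\tau,b_\tau$ themselves, and you then invoke ``the strongly convergent amplitudes $r_\tau$'' when passing to the limit in the degenerate fluxes. This implication is false: from $r_\tau+b_\tau\to\rho$ strongly and $0\le r_\tau,b_\tau\le 1$ one cannot conclude that $r_\tau$ or $b_\tau$ converge strongly, since oscillations in $r_\tau$ and $b_\tau$ can cancel in the sum. The entropy dissipation only controls $\sqrt{1-\rho_\tau}\,\nabla r_\tau$ and $\nabla\rho_\tau$, not $\nabla r_\tau$ itself, so Aubin--Lions applied componentwise gives nothing for $r_\tau$ alone.

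The paper addresses exactly this point: it rewrites the flux as $(1-\rho)\nabla r+r\nabla\rho=\sqrt{1-\rho}\,\nabla(\sqrt{1-\rho}\,r)-3\sqrt{1-\rho}\,r\,\nabla\sqrt{1-\rho}$, so what is actually required is strong convergence of $\sqrt{1-\rho_\tau}\,r_\tau$ (and identification of its weak $H^1$ limit), not of $r_\tau$. This strong convergence does \emph{not} come for free either; the paper obtains it by a separate argument (Kolmogorov--Riesz, or the generalized Aubin--Lions lemma of Zamponi--J\"ungel), combining the available gradient bound on $\sqrt{1-\rho_\tau}\,r_\tau$ with the control on the discrete time derivative. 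Once this is in place, the nonlocal flux $r(1-\rho)\nabla(K\ast r)=\sqrt{1-\rho}\cdot(\sqrt{1-\rho}\,r)\cdot\nabla(K\ast r)$ also passes to the limit. Your proposal should replace the unjustified claim of strong convergence of $r_\tau,b_\tau$ by this argument for $\sqrt{1-\rho_\tau}\,r_\tau$ and $\sqrt{1-\rho_\tau}\,b_\tau$.
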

We remark that the proof without non-local interactions is given in \cite[Theorem 4.1]{Burger2010}, yet using a different approximation technique. Here, we use a semi-discretization in time that has also been used in \cite{Burger2016}, but applied to a different, anisotropic system.
%
\begin{proof}
Without loss of generality we choose $D=1$ in the proof and furthermore denote by $C$ a generic constant whose value can change from line to line. For different $D>0$, some constants change yet the arguments remain the same. We prove the theorem in three steps: First we obtain an a priori bound based on the entropy dissipation, second we apply a time discretization and use a fixed point argument to ensure existence of weak solutions for every time step. Finally, we finish the proof passing to continuous time.\\
\textbf{First step: Entropy inequality} We calculate the dissipation of the diffusive part of the energy functional
\begin{align*}
&\frac{d}{dt} (\eps F^E+F^C)(r,\, b) = \int_{\Omega} \partial_t r u+ \partial_t b v \,dx \\
&\quad=  \int_{\Omega} \nabla \cdot (\eps [(1-\rho)\nabla r +r \nabla \rho ]+r(1-\rho)\nabla V + r(1-\rho)\nabla(c_{11}K\ast r - K\ast b))u \\
&\quad \quad+ \nabla \cdot (\eps[(1-\rho)\nabla b +b \nabla \rho ]+b(1-\rho)\nabla V+ b(1-\rho)\nabla(c_{22}K\ast b - K\ast r ))v \,dx.
\end{align*}
Using the gradient of the entropy variables \eqref{eq:etar}--\eqref{eq:etab}
\begin{align*}
\nabla u&=\frac{\eps}{r} \nabla r +\frac{\eps}{1-\rho} \nabla \rho+ \nabla V,\\
\nabla v&=\frac{\eps}{b} \nabla b +\frac{\eps}{1-\rho} \nabla \rho+ \nabla V,
\end{align*}
we obtain
\begin{align} \label{Youngapp}
 \frac{d}{dt}& (\eps F^E+F^C) + \int_{\Omega} r(1-\rho)(\nabla u)^2+ b(1-\rho)(\nabla v)^2 \,dx  \\ \nonumber
 &\le \int_{\Omega} -r(1-\rho) (c_{11}\nabla K *r - \nabla K *b)\nabla u - b(1-\rho) (c_{22}\nabla K\ast b - \nabla K\ast r)\nabla v \, dx,
\end{align}
By the definition of the entropy variables \eqref{eq:etar}--\eqref{eq:etab} we see that $(u,\,v)\in [L^1(\Omega)]^2$. Therefore $(u,\, v)$ take finite values almost everywhere and we conclude that $(r,\,b) \in \mathcal{M}$, see also \cite{Juengel2014}.
Since $K \in W^{1,1}_{loc}(\Omega)$ as defined in \ref{def:admissible} and $\Omega$ is bounded, we can apply Young's inequality for convolutions   
\begin{align*}
\|\nabla K\ast r\|_{L^2(\Omega)} \le \|\nabla K\|_{L^1(\Omega)} \|r\|_{L^2(\Omega)},
\end{align*}
and the analogue for $b.$
Now, rewriting $r(1-\rho)=\sqrt{r(1-\rho)}\sqrt{r(1-\rho)}$ (analogously for $b(1-\rho)$) and using Young's inequality allows us to absorb terms  $r(1-\rho)(\nabla u)^2,\,b(1-\rho)(\nabla v)^2$ and we obtain
\begin{align} \label{entropyinequality}
&\frac{d}{dt} (\eps F^E+F^C) +\frac{1}{2} \int_{\Omega} r(1-\rho)(\nabla u)^2+ b(1-\rho)(\nabla v)^2 \,dx  \nonumber \\
&\quad \leq (-c_{11}\|r\|_{L^2(\Omega)}+\|b\|_{L^2(\Omega)})^2 \| r(1-\rho) \|_{L^{\infty}(\Omega)} 
+ (-c_{22}\|b\|_{L^2(\Omega)}+\|r\|_{L^2(\Omega)})^2\|b(1-\rho)\|_{L^{\infty}(\Omega)}  \nonumber \\
&\quad \leq \frac{1}{4}(-c_{11}\|r\|_{L^2(\Omega)}+\|b\|_{L^2(\Omega)})^2+ \frac{1}{4}(-c_{22}\|b\|_{L^2(\Omega)}+\|r\|_{L^2(\Omega)})^2 =: C
\end{align} 
where in the last step we used $r(1-\rho), b(1-\rho)\le \frac{1}{4}$. The positive constant $C$ depends on the interaction kernel $K$ and on the constants $c_{11},\, c_{12}$ only. In particular, it only depends on the size of the domain $\Omega$ via the $L^2$-Norms of $r$ and $b$. Expanding the terms containing $\nabla u$ and $\nabla v$ and integrating in time we finally obtain
\begin{align*} 
& (\eps F^E+F^C)(r,\, b) + \frac{\eps^2}{4} \int_0^T\int_{\Omega} (1-\rho)|\nabla\sqrt{r}|^2 + (1-\rho)|\nabla\sqrt{b}|^2 + |\nabla\sqrt{1-\rho}|^2 + 4|\nabla \rho|^2 \,dx dt  \nonumber \\
&\quad + 2\eps \int_0^T\int_{\Omega} \nabla \rho \nabla V + \rho (1-\rho) (\nabla V)^2 \,dxdt \nonumber \\
&\quad \le (\eps F^E+F^C)(r_0,\, b_0) + CT.
\end{align*}
Now we can use the Young's inequality weighted with $\frac{1}{4}$ together with the fact that $\nabla V$ is bounded in $L^2(\Omega)$
\begin{align*} 
& (\eps F^E+F^C)(r,\, b) + \frac{\eps^2}{4} \int_0^T\int_{\Omega} (1-\rho)|\nabla\sqrt{r}|^2 + (1-\rho)|\nabla\sqrt{b}|^2 + |\nabla\sqrt{1-\rho}|^2 + 4|\nabla \rho|^2 \,dx dt  \nonumber \\
&\quad \leq (\eps F^E+F^C)(r_0,\, b_0) + CT - 2\eps \int_0^T\int_{\Omega} \nabla \rho \nabla V - \rho (1-\rho) (\nabla V)^2 \,dxdt \nonumber \\ .
&\quad \leq (\eps F^E+F^C)(r_0,\, b_0) + CT + 2\eps \int_0^T\int_{\Omega} \left| \nabla \rho \nabla V\right| + \rho (1-\rho) (\nabla V)^2 \,dxdt \nonumber \\ 
&\quad \leq (\eps F^E+F^C)(r_0,\, b_0) + CT + \int_0^T\int_{\Omega} \frac{\eps^2}{2} (\nabla \rho)^2 + (\nabla V)^2 \,dxdt+  2\eps\int_0^T\int_{\Omega} \rho (1-\rho) (\nabla V)^2 \,dxdt \nonumber \\ 
&\quad \leq (\eps F^E+F^C)(r_0,\, b_0) + CT + T\|\nabla V\|_{L^2(\Omega)}^2(1+\frac{\eps}{2})+ \int_0^T\int_{\Omega} \frac{\eps^2}{2}(\nabla \rho)^2\,dxdt
\end{align*}
Summarizing we find 
\begin{align} \label{entropyinequality2}
& (\eps F^E+F^C)(r,\, b) + \frac{\eps^2}{4} \int_0^T\int_{\Omega} (1-\rho)|\nabla\sqrt{r}|^2 + (1-\rho)|\nabla\sqrt{b}|^2 + |\nabla\sqrt{1-\rho}|^2 + 2|\nabla \rho|^2 \,dx dt  \nonumber \\
&\quad \leq (\eps F^E+F^C)(r_0,\, b_0) + CT.
\end{align}
\textbf{Second step: Time discretization} We continue by rewriting the system using again the definition of the entropy variables \eqref{eq:etar}--\eqref{eq:etab}. We obtain
\begin{align*}
 \left(\begin{array}{c}
      \partial_t r \\
      \partial_t b
    \end{array}\right) 
= \left( \begin{array}{c}
\nabla \cdot ( r(1-\rho)\nabla u + r(1-\rho)[c_{11}\nabla K\ast r-\nabla K\ast b + \nabla V] ) \\
\nabla \cdot ( b(1-\rho)\nabla v + b(1-\rho)[c_{22}\nabla K\ast b-\nabla K\ast r + \nabla V] )
\end{array} \right).
\end{align*}
Next we discretize the system in time using the Euler implicit scheme. We split the time interval as 
\begin{align*}
 (0,\, T]=\sum_{k=1}^n ((k-1)\tau,\, k\tau],
\end{align*}
with $\tau=\frac{T}{n}.$ Furthermore, we add an additional regularization term. The system then becomes 
\begin{align} \label{problem}
\frac{1}{\tau}\left(
\begin{array}{c}
r_{k-1}-r_k\\
 b_{k-1}-b_k\\
\end{array}
\right)&=\left( \begin{array}{c}
\nabla \cdot ( r_k(1-\rho_k)\nabla u_k + r_k(1-\rho_k)[c_{11}\nabla K\ast r_k-\nabla K\ast b_k + \nabla V] ) \\
\nabla \cdot ( b_k(1-\rho_k)\nabla v_k + b_k(1-\rho_k)[c_{22}\nabla K\ast b_k-\nabla K\ast r_k + \nabla V] )
\end{array} \right) \nonumber \\
&+\tau \left(
\begin{array}{c}
\Delta u_{k}-u_k\\
 \Delta v_k-v_k\\
\end{array}
\right).
\end{align}
We consider the weak formulation of the problem \eqref{problem} for all $(\Phi_1,\Phi_2) \in H^1(\Omega)^2$ 
\begin{align} \label{weakformulation}
&\frac{1}{\tau} \int_{\Omega}\left(
\begin{array}{c}
r_k-r_{k-1}\\
 b_k-b_{k-1}\\
\end{array}
\right) \cdot \left(
\begin{array}{c}
\Phi_1\\
\Phi_2\\
\end{array}
\right)\,dx+ \int_{\Omega}\left(
\begin{array}{c}
\nabla \Phi_1\\
\nabla \Phi_2\\
\end{array}
\right)^T G(r_k,\, b_k) \left(
\begin{array}{c}
\nabla u_k \\
\nabla v_k\\
\end{array}
\right)\,dx \nonumber \\
&+ \int_{\Omega} H(r_k,\, b_k)\left(
\begin{array}{c}
\nabla \Phi_1\\
\nabla \Phi_2\\
\end{array}
\right)\,dx+ \tau R\left(\left(
\begin{array}{c}
 \Phi_1\\
 \Phi_2\\
\end{array}
\right), \left(
\begin{array}{c}
 u_{k}\\
 v_{k}\\
\end{array}
\right) \right)=0,
\end{align}
where we introduced
\begin{equation} \label{G}
G(r,\, b)=  \left( \begin{array}{cc}
r(1-\rho) & 0 \\
0 & b(1-\rho) \end{array} \right),
\end{equation}
and
\begin{equation} \label{defH}
H(r,\, b)=  \left( \begin{array}{c}
r(1-\rho)[c_{11}\nabla K\ast r-\nabla K\ast b] \\
b(1-\rho)[c_{22}\nabla K\ast b-\nabla K\ast r] \end{array} \right).
\end{equation}
Note that the matrix $G(r,\, b)$ is positive semi-definite, because $r,\, b \geq 0$ and $\rho \leq 1.$ We furthermore abbreviated the regularization term as
\begin{equation} \label{R}
R\left(\left(
\begin{array}{c}
 \Phi_1\\
 \Phi_2\\
\end{array}
\right), \left(
\begin{array}{c}
 u_{k}\\
 v_{k}\\
\end{array}
\right) \right) =\int_{\Omega} \Phi_1 u_k + \Phi_2 v_k+ \nabla \Phi_1 \nabla u_k + \nabla \Phi_2 \nabla v_k \, dx dy.
\end{equation}
To solve the nonlinear equation \eqref{weakformulation} for $(u_k, \, v_k)$, we first linearize it by replacing the terms $G(r_k,\,b_k)$ and $H(r_k,b_k)$ by $G(\tilde r,\tilde b)$ and $H(\tilde r, \tilde b)$, respectively for given $(\tilde r, \tilde b) \in [L^2(\Omega)]^2$ and $(\tilde r,\tilde b) \in \mathcal{M}$.
To treat this linear problem, we introduce the bilinear form
\begin{align*}
a&: [H^1(\Omega)]^2 \times [H^1(\Omega)]^2 \to \mathbb{R}
\end{align*}
given by
\begin{align}
a((u,\, v),(\Phi_1,\, \Phi_2))&=\int_{\Omega} \left(
\begin{array}{c}
\nabla \Phi_1\\
\nabla \Phi_2\\
\end{array}
\right)^T G(\tilde{r},\, \tilde{b}) \left(
\begin{array}{c}
\nabla u \\
\nabla v\\
\end{array}
\right)\,dx
+ \tau R\left(\left(
\begin{array}{c}
 \Phi_1\\
 \Phi_2\\
\end{array}
\right), \left(
\begin{array}{c}
 u\\
 v\\
\end{array}
\right) \right),
\end{align}
and the linear form $B$ 
$$B(\Phi_1,\, \Phi_2)=-\frac{1}{\tau} \int_{\Omega}\left(
\begin{array}{c}
\tilde{r}-r_{k-1}\\
 \tilde{b}-b_{k-1}\\
\end{array}
\right) \cdot \left(
\begin{array}{c}
\Phi_1\\
\Phi_2\\
\end{array}
\right)\,dx+ \int_{\Omega} H(\tilde{r},\tilde{b}) \left(
\begin{array}{c}
\nabla \Phi_1\\
\nabla \Phi_2\\
\end{array}
\right)\,dx.$$ 
Thus, \eqref{weakformulation} is equivalent to 
\begin{equation} \label{defa}
a((u,v),(\Phi_1,\, \Phi_2))=B(\Phi_1,\Phi_2)\quad\forall (\Phi_1,\Phi_2) \in [H^1(\Omega)]^2.
\end{equation}
Writing out the definitions of the matrices, it is straight forward to see that $a$ is bounded
$$ a\left( \left(
\begin{array}{c}
u\\
v\\
\end{array}
\right), \left(
\begin{array}{c}
\Phi_1\\
\Phi_2\\
\end{array}
\right)\right) \leq (1+\tau) \left\Vert \left(
\begin{array}{c}
u\\
v\\
\end{array}
\right)\right\Vert_{H^1(\Omega)}  \left\Vert\left(
\begin{array}{c}
\Phi_1\\
\Phi_2\\
\end{array}
\right)\right\Vert_{H^1(\Omega)}.$$
The regularization term ensures the coerciveness of $a$
$$a ((u,\, v), (u,\, v))= \int_{\Omega} \left(
\begin{array}{c}
\nabla u\\
\nabla v\\
\end{array}
\right)^T G(\tilde{r},\tilde{b}) \left(
\begin{array}{c}
\nabla u \\
\nabla v\\
\end{array}
\right)\,dx
+ \tau R\left(\left(
\begin{array}{c}
 u\\
 v\\
\end{array}
\right), \left(
\begin{array}{c}
 u\\
 v\\
\end{array}
\right) \right) $$ 
$$\geq \tau R\left(\left(
\begin{array}{c}
 u\\
 v\\
\end{array}
\right), \left(
\begin{array}{c}
 u\\
 v\\
\end{array}
\right) \right)= \tau( \|u\|^2_{H^1(\Omega)}+\|v\|^2_{H^1(\Omega)}).$$ 
The boundedness of $B$ as a linear functional $H^1(\Omega)^2 \to \mathbb{R}$ is a consequence of Young's inequality for convolutions and the regularity of $K$. 
We conclude existence of unique solutions $(u,v)$ since all assumptions of the Lax-Milgram lemma, \cite[Sec. 6.21., Thm 1]{Evans1998}, are satisfied. This allows us to define a fixed point operator
$$\mathcal{F}: \mathcal{M} \to \mathcal{M}, (\tilde{r},\, \tilde{b}) \to (r,\,b)=DE^{-1}(u,\, v)$$
with $(u,\,v) \in [H^1(\Omega)]^2$ being the unique solution to \eqref{defa}. We aim to apply Leray-Schauder's fixed point theorem as stated in \cite[Thm 11.3]{Gilbarg1983}. We know that $\mathcal{M}$ is bounded in $L^2(\Omega)^2,$ because $\Omega$ is assumed to be bounded 
$$  \|r\|_{L^2(\Omega)} \leq \|r\|_{L^{\infty}(\Omega)}|\Omega| = |\Omega| < \infty$$
and the analogue for the second component $b.$
It remains to establish compactness of $\mathcal{F}$. Consider a sequence $(\tilde{r}_k, \, \tilde{b}_k) \in\mathcal{M}$ that converges to $(\tilde{r},\, \tilde{b})$ strongly in $[L^2(\Omega)]^2$. Since $\tau > 0$, the regularization term ensures that $(u_k,\,v_k)$ are uniformly bounded in $[H^1(\Omega)]^2$ and thus there exists subsequences (which we don't relabel) such that their gradients converge weakly in $[L^2(\Omega)]^N$. Since the entries of $G(r_k,b_k)$ are uniformly bounded in $L^\infty(\Omega)$, we also have strong convergence of $G(r_k,b_k)$ to $G(r,b)$ in $L^2(\Omega)$. This, together with a standard approximation of the test functions in $W^{1,\infty}(\Omega)$ allows us to pass to the limit and conclude the continuity of $a$. For the linear form $F$, continuity, now with respect to $\tilde r_n,\tilde b_n$ is once more a consequence of Young's inequality for convolutions. Finally, the fact that the mapping \eqref{eq:etainv} is differentiable and thus Lipschitz continuous, we conclude the continuity of the whole operator $\mathcal{F}$. Together with the compactness of the embedding $H^1(\Omega) \hookrightarrow L^2(\Omega)$, Schauder's fixed point theorem can be applied.
That means we have established existence of weak solutions for every iteration step. 
\\ \textbf{Third step: Limit $\tau \to 0$} Now we extend the result to continuous time, i.e. we consider the limit $\tau \to 0$. We want to use the convexity of the entropy $(\eps F^E+F^C),$ which together with its differentiability implies that
\begin{align*}
 \frac{d}{dt} (\eps F^E+F^C)(\phi_1)(\phi_1-\phi_2) \geq (\eps F^E+F^C)(\phi_1)-(\eps F^E+F^C)(\phi_2). 
\end{align*}
Now we discretize time and choose $\phi_1=(r_k,\, b_k)$ and $\phi_2=(r_{k-1},\, b_{k-1}).$ Using the definition of the entropy variables given in \eqref{eq:etar}-\eqref{eq:etab} yields 
\begin{align*}
\frac{1}{\tau}\int_{\Omega} \left(
\begin{array}{c}
r_k-r_{k-1}\\
b_k-b_{k-1}\\
\end{array}
\right) \cdot \left(
\begin{array}{c}
u_k\\
v_k\\
\end{array}
\right)\, dx \geq  ((\eps F^E+F^C)(r_k,\, b_k)-(\eps F^E+F^C)(r_{k-1},\, b_{k-1}).
\end{align*} 
We insert this inequality into the weak formulation \eqref{weakformulation} of our problem for $(\Phi_1,\Phi_2)=(u_k,\, v_k)$, we find
\begin{align*}
&(\eps F^E+F^C)(r_k,\, b_k) + 
\int_{\Omega} \left(
\begin{array}{c}
\nabla u_k\\
\nabla v_k\\
\end{array}
\right)^T G(r_k,\, b_k) \left(
\begin{array}{c}
\nabla u_k \\
\nabla v_k\\
\end{array}
\right)\,dx\\
&+ \int_{\Omega} H(r_k,\, b_k)\left(
\begin{array}{c}
\nabla u_k\\
\nabla v_k\\
\end{array}
\right)\, dx+ \tau R\left(\left(
\begin{array}{c}
 u_k\\
 v_k\\
\end{array}
\right), \left(
\begin{array}{c}
 u_k\\
 v_k\\
\end{array}
\right) \right) \\
&\leq (\eps F^E+F^C)(r_{k-1},\, b_{k-1}). 
\end{align*} 
Solving this recursion w.r.t. $k$ gives 
\begin{align*}
& (\eps F^E+F^C)(r_k,\, b_k) + \sum_{j=1}^k
\int_{\Omega} \left(
\begin{array}{c}
\nabla u_j\\
\nabla v_j\\
\end{array}
\right)^T G(r_j,\, b_j) \left(
\begin{array}{c}
\nabla u_j \\
\nabla v_j\\
\end{array}
\right)\,dx \\
&\quad+ \sum_{j=1}^k \int_{\Omega} H(r_j,\, b_j)\left(
\begin{array}{c}
\nabla u_j\\
\nabla v_j\\
\end{array}
\right)\, dx + \tau \sum_{j=1}^k R\left(\left(
\begin{array}{c}
 u_j\\
 v_j\\
\end{array}
\right), \left(
\begin{array}{c}
 u_j\\
 v_j\\
\end{array}
\right) \right)
\leq (\eps F^E+F^C)(r_0,\, b_0)
\end{align*} 
As a next step we use a piecewise constant interpolation, i.e. 
for $x \in \Omega$ and $t \in ((k-1)\tau, k\tau]$ we define
$$r_{\tau}(x,\, t)=r_k(x) \quad \text{and} \quad b_{\tau}(x,\, t)=b_k(x).$$
Furthermore let $\sigma_{\tau}$ be the shift operator such that for $\tau \leq t \leq T$
$$(\sigma_{\tau}r_{\tau})(x,\, t)=r_{\tau}(x,\, t-\tau) \quad \text{and} \quad 
(\sigma_{\tau}b_{\tau})(x,\, t)=b_{\tau}(x,\, t-\tau).$$
Then $(r_{\tau},\, b_{\tau})$ solves the following equation
\begin{align} \label{eq:previous}
&\frac{1}{\tau}\int_0^T \int_{\Omega} \left(\begin{array}{c}
  r_{\tau}-\sigma_{\tau}r_{\tau}\\
  b_{\tau}-\sigma_{\tau}b_{\tau}\\
  \end{array}
  \right) \cdot \left(
\begin{array}{c}
u_{\tau}\\
v_{\tau}\\
\end{array}
\right)\, dx dt + \int_0^T\int_{\Omega} \left(
\begin{array}{c}
(1-\rho_{\tau})\nabla r_{\tau}+ r_{\tau}\nabla \rho_{\tau}\\
(1-\rho_{\tau})\nabla b_{\tau}+ b_{\tau}\nabla \rho_{\tau}\\
\end{array}
\right) \cdot \left(
\begin{array}{c}
\nabla u_{\tau}\\
\nabla v_{\tau}\\
\end{array}
\right) \, dx dt \nonumber \\
&\quad + \int_0^T\int_{\Omega} H(r_\tau,\, b_\tau)\left(
\begin{array}{c}
\nabla u_{\tau}\\
\nabla v_{\tau}\\
\end{array}
\right)\, dxdt
+ \tau R\left(\left(
\begin{array}{c}
u_{\tau}\\
v_{\tau}\\
\end{array}
\right), \left(
\begin{array}{c}
 u_{\tau}\\
 v_{\tau}\\
\end{array}
\right) \right)=0.
\end{align}
Now, using again Young's inequality and reiterating the same steps as performed when deriving \eqref{entropyinequality2} from \eqref{Youngapp}, the above inequality becomes 
\begin{align}\label{eq:entropydiscrete}
&(\eps F^E+F^C)(r_{\tau}(T),\, b_{\tau}(T)) \nonumber \\
&+\frac{1}{2} \int_0^T\int_{\Omega} (1-\rho_\tau)|\nabla\sqrt{r_\tau}|^2 + (1-\rho_\tau)|\nabla\sqrt{b_\tau}|^2 + |\nabla\sqrt{1-\rho_\tau}|^2 + |\nabla \rho_\tau|^2 \,dxdt \nonumber \\
&\quad +\tau \int_0^T R\left(\left(
\begin{array}{c}
 u_{\tau}\\
 v_{\tau}\\
\end{array}
\right), \left(
\begin{array}{c}
 u_{\tau}\\
 v_{\tau}\\
\end{array}
\right) \right) \, dxdt 
\leq (\eps F^E+F^C)(r_0,\, b_0)+CT.
\end{align}
In order to pass to the limit $\tau \to 0$ and to show the convergence $r_{\tau} \to r$ and $b_{\tau} \to b$ we need the following two lemmas.
\begin{lemma} \label{estimates1}
The following estimates hold for a constant $\tilde{C} \in \mathbb{R}_+$ and $r_{\tau}, \, b_{\tau} \in L^{\infty}(\Omega)$
\begin{align}\label{eq:ineq1}
\| \sqrt{1-\rho_{\tau}}\nabla \sqrt{r_{\tau}} \|_{L^2((0,T),\, L^2(\Omega))} + \| \sqrt{1-\rho_{\tau}}\nabla \sqrt{b_{\tau}} \|_{L^2((0,T),\, L^2(\Omega))} &\leq \tilde{C},\\ \label{ineq2}
\| \sqrt{1-\rho_{\tau}}\|_{L^2((0,T),\, H^1(\Omega))}+\| \rho_{\tau}\|_{L^2((0,T), \,H^1(\Omega))} &\leq \tilde{C}\\\label{eq:estreguv}
 \sqrt{\tau}(\|u_\tau\|_{L^2((0,T);H^1(\Omega))} + \|v_\tau\|_{L^2((0,T);H^1(\Omega))}) &\le \tilde{C}
\end{align}
\end{lemma}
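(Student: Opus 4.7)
The proof plan is to read all three estimates directly off the discrete entropy inequality \eqref{eq:entropydiscrete}. The first task is to turn this inequality into genuine upper bounds on each of the non-negative terms on the left-hand side. Since the confining potential grows at infinity and $r_\tau, b_\tau, 1-\rho_\tau$ take values in $[0,1]$, the same relative-entropy argument used in the proof of Theorem~\ref{thm:existence} (Jensen's inequality against the weight $e^{-V/(2\eps)}$, together with the pointwise bound $(1-\rho)\log(1-\rho) \ge -1/e$) produces a lower bound on $(\eps F^E+F^C)(r_\tau(T), b_\tau(T))$ that depends only on the masses $m_r, m_b$, $\eps$, $V$ and $|\Omega|$, but not on $\tau$ or $T$. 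Moving this lower bound to the right-hand side of \eqref{eq:entropydiscrete} produces a constant $\tilde C$ depending only on the data and $T$ which dominates the four dissipation terms and the regularization term separately.

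The first inequality \eqref{eq:ineq1} is then immediate from the identity
\[
\int_0^T\!\!\int_\Omega (1-\rho_\tau)|\nabla\sqrt{r_\tau}|^2 \, dx\, dt = \|\sqrt{1-\rho_\tau}\,\nabla\sqrt{r_\tau}\|_{L^2((0,T); L^2(\Omega))}^2
\]
and its analogue for $b_\tau$. For \eqref{ineq2}, the remaining two dissipation terms yield uniform $L^2((0,T); L^2(\Omega))$ bounds on $\nabla\sqrt{1-\rho_\tau}$ and $\nabla\rho_\tau$. To upgrade these to $L^2((0,T); H^1(\Omega))$ I will combine them with the trivial $L^\infty_{t,x}$ bounds $0\le \rho_\tau\le 1$ and $0\le \sqrt{1-\rho_\tau}\le 1$, which together with the boundedness of $\Omega$ give uniform $L^2((0,T); L^2(\Omega))$ bounds on $\rho_\tau$ and $\sqrt{1-\rho_\tau}$ themselves; adding function and gradient bounds then delivers \eqref{ineq2}.

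Finally, \eqref{eq:estreguv} is a direct rewriting: by \eqref{R} the regularization term in \eqref{eq:entropydiscrete} equals $\tau \int_0^T \bigl(\|u_\tau\|_{H^1(\Omega)}^2 + \|v_\tau\|_{H^1(\Omega)}^2\bigr) \, dt$, i.e. $(\sqrt{\tau}\,\|u_\tau\|_{L^2((0,T);H^1)})^2 + (\sqrt{\tau}\,\|v_\tau\|_{L^2((0,T);H^1)})^2$. The bound $\le \tilde C^2$ then gives the claim up to the elementary inequality $a+b \le \sqrt{2(a^2+b^2)}$.

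The only non-routine step is producing the uniform-in-$T$ (and uniform-in-$\tau$) lower bound on $(\eps F^E+F^C)(r_\tau(T), b_\tau(T))$, because without it one cannot separate the positive dissipation terms from the energy in \eqref{eq:entropydiscrete}. However the argument is entirely parallel to the one in Theorem~\ref{thm:existence}, relying on the nonnegativity and quadratic growth of $V$ together with the built-in $L^\infty$ constraints of $\mathcal{M}$, so it is expected to present no real difficulty.
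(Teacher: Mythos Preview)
Your proposal is correct and follows exactly the route the paper indicates, namely reading each estimate directly off the discrete entropy inequality \eqref{eq:entropydiscrete} after bounding $(\eps F^E+F^C)(r_\tau(T),b_\tau(T))$ from below. Since $\Omega$ is bounded here, that lower bound is in fact even simpler than you suggest: the pointwise inequality $z\log z\ge -1/e$ for $z\in[0,1]$ together with $V\ge 0$ already gives $(\eps F^E+F^C)\ge -3\eps|\Omega|/e$, so the relative-entropy/Jensen step from Theorem~\ref{thm:existence} is not needed.
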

\begin{lemma} \label{estimates2}
 The discrete time derivatives of $r_{\tau}$ and $b_{\tau}$ are uniformly bounded
 \begin{align}\label{eq:lemmarho}
 \frac{1}{\tau} \|r_{\tau}-\sigma_{\tau}r_{\tau}\|_{L^2((0,T),\, H^{-1}(\Omega))}+\frac{1}{\tau} \|b_{\tau}-\sigma_{\tau}b_{\tau}\|_{L^2((0,T),\, H^{-1}(\Omega))} \leq C.
 \end{align}
 \end{lemma}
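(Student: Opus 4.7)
The plan is to establish the $L^2(H^{-1})$-bound by duality: for any $\Phi \in L^2(0,T;H^1(\Omega))$ I would control the discrete time increment tested against $\Phi$ by a constant independent of $\tau$. To this end, I would use the weak formulation \eqref{eq:previous} with the choice $(\Phi_1, \Phi_2) = (\Phi, 0)$ to isolate the contribution from $r_\tau$; the analogous argument with $(0,\Phi)$ handles $b_\tau$.

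Testing \eqref{eq:previous} with $(\Phi,0)$ yields, up to absorbed $\eps$ and $V$ factors which behave identically,
\begin{align*}
\frac{1}{\tau}\int_0^T\!\!\int_\Omega (r_\tau - \sigma_\tau r_\tau)\Phi \,dxdt &= -\int_0^T\!\!\int_\Omega \bigl[(1-\rho_\tau)\nabla r_\tau + r_\tau \nabla \rho_\tau\bigr]\cdot \nabla \Phi \,dxdt \\
&\quad - \int_0^T\!\!\int_\Omega H_1(r_\tau,b_\tau)\cdot\nabla \Phi \,dxdt - \tau \int_0^T\!\!\int_\Omega\bigl(\Phi u_\tau + \nabla\Phi \cdot\nabla u_\tau\bigr) \,dxdt,
\end{align*}
with $H_1$ the first component of $H$ as in \eqref{defH}. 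It then remains to bound each right-hand side term by $C\|\Phi\|_{L^2(H^1)}$, with $C$ independent of $\tau$.

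For the degenerate cross-diffusion flux, I would rewrite $(1-\rho_\tau)\nabla r_\tau = 2(1-\rho_\tau)\sqrt{r_\tau}\,\nabla\sqrt{r_\tau}$ and exploit $r_\tau(1-\rho_\tau) \leq 1/4$ (which follows from $r_\tau + b_\tau \leq 1$ and AM--GM) to derive the pointwise estimate $|(1-\rho_\tau)\nabla r_\tau| \leq \sqrt{1-\rho_\tau}\,|\nabla\sqrt{r_\tau}|$, so that \eqref{eq:ineq1} provides a uniform $L^2(L^2)$-bound. The term $r_\tau\nabla\rho_\tau$ is bounded in $L^2(L^2)$ by combining $r_\tau \leq 1$ with \eqref{ineq2}. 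For $H_1$, boundedness of $r_\tau,b_\tau$ by one together with Young's inequality for convolutions, $\|\nabla K \ast r_\tau\|_{L^\infty} \leq \|\nabla K\|_{L^1}\|r_\tau\|_{L^\infty}$ (and similarly for $b_\tau$), yields a uniform $L^\infty((0,T)\times\Omega)$-bound. Finally, the regularization contribution is dominated by $\tau\|u_\tau\|_{L^2(H^1)}\|\Phi\|_{L^2(H^1)} \leq \sqrt{\tau}\,\tilde C\|\Phi\|_{L^2(H^1)}$ via \eqref{eq:estreguv}, which is uniformly bounded (and in fact vanishes as $\tau \to 0$). Cauchy--Schwarz in space-time then assembles the four pieces into the desired dual bound.

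The main obstacle is the treatment of the degenerate cross-diffusion flux $(1-\rho_\tau)\nabla r_\tau$: since neither $1-\rho_\tau$ nor $\nabla r_\tau$ is separately controlled in $L^2$, one must exploit the entropy-dissipation structure from Lemma \ref{estimates1} through the symmetric reformulation in terms of $\sqrt{r_\tau(1-\rho_\tau)}$ and $\sqrt{1-\rho_\tau}\,\nabla\sqrt{r_\tau}$. Once this substitution is in place, taking the supremum over $\Phi$ in the unit ball of $L^2(0,T;H^1(\Omega))$ yields \eqref{eq:lemmarho} for $r_\tau$, and the symmetric argument for $b_\tau$ completes the proof.
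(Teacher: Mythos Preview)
Your proposal is correct and follows essentially the same approach the paper indicates: the paper only states that Lemma~\ref{estimates2} ``results from using the estimates of the first one in the weak formulation,'' and you have spelled out precisely those details---testing the time-integrated weak formulation with $(\Phi,0)$, then controlling the diffusive flux via the rewriting $(1-\rho_\tau)\nabla r_\tau = 2\sqrt{r_\tau(1-\rho_\tau)}\,\sqrt{1-\rho_\tau}\,\nabla\sqrt{r_\tau}$ together with \eqref{eq:ineq1}, the transport term via \eqref{ineq2}, the nonlocal term via Young's convolution inequality, and the regularization via \eqref{eq:estreguv}.
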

The first lemma is a direct consequence of the discrete in time entropy inequality \eqref{eq:entropydiscrete}, see \cite{Schlake2011,Burger2016} for details. The second results from using the estimates of the first one in the weak formulation \eqref{weakformulation}. Now we apply a special version of the Aubins-Lions lemma specifically designed for piecewise constant interpolations, cf. \cite[Theorem 1]{Dreher2012}. Using the Gelfand triple $H^1(\Omega)\hookrightarrow L^2(\Omega) \hookrightarrow H^{-1}(\Omega),$ this theorem implies that \eqref{eq:lemmarho} together with the boundedness of $\rho_\tau$ in $L^2((0,T);H^1(\Omega))$ is sufficient to obtain a strongly converging subsequence such that (without relabeling)
\begin{align*}
\rho_\tau \to \rho\text{ in }L^2((0,T);L^2(\Omega))\text{ as }\tau \to 0.
\end{align*}
This implies
\begin{align*}
\sqrt{1-\rho_{\tau}} \to \sqrt{1-\rho} \quad \text{in} \quad L^4((0,T),\, L^4(\Omega)).
\end{align*}
But since the space $L^4((0,T),\, L^4(\Omega))$ embeds continuously in $L^2((0,T),\, L^2(\Omega)),$ the function $\sqrt{1-\rho_{\tau}}$ must also converge strongly to $\sqrt{1-\rho}$ in $L^2((0,T),\,L^2(\Omega))$. Note that \eqref{eq:estreguv} immediately implies
\begin{align*}
 \tau u_\tau,\,\tau v_\tau \to 0 \text{ as }\tau\to 0\text{ strongly in }L^2((0,T);H^1(\Omega)),\\
 \tau \nabla u_\tau,\,\tau \nabla v_\tau \to 0 \text{ as }\tau\to 0\text{ strongly in }L^2((0,T);[L^2(\Omega)]^N).
\end{align*}
Finally, since $r_\tau,\,b_\tau$ are uniformly bounded in $L^\infty(\Omega)$, we know that there exist subsequences such that
\begin{align}\label{eq:rbweakstar}
 r_\tau \rightharpoonup^{*} r,\; b_\tau \rightharpoonup^{*} b\quad\text{ in } L^\infty((0,T);L^\infty(\Omega)).
\end{align}
In order to be able to pass to the limit in \eqref{weakformulation}, we rewrite
\begin{align*}
 (1-\rho_\tau)\nabla r_\tau + r_\tau\nabla \rho_\tau = \sqrt{1-\rho_\tau}\nabla (\sqrt{1-\rho_\tau}r_\tau) - 3 \sqrt{1-\rho_\tau}r_\tau \nabla\sqrt{1-\rho_\tau}
\end{align*}
The first term on the right hand side can be dealt with by further expanding the term $\nabla (\sqrt{1-\rho_\tau}r_\tau)$, the fact that $\sqrt{1-\rho_\tau}$ converges strongly, using the bounds \eqref{eq:ineq1} and the fact \eqref{eq:rbweakstar}. For the second term, one has to prove strong convergence of $\sqrt{1-\rho_\tau}r_\tau,$ which can be done either by using the Kolmogorov-Riesz theorem, see \cite{Burger2010} or again a generalized Aubins-Lions lemma, see \cite{Zamponi2015}. Note that this strong convergence also allows us to identify the corresponding, weakly converging gradient.
\end{proof}
As a next step, we extend the previous result to the case $\Omega = \RR^N$. The crucial ingredient will be the entropy inequality, which also holds on the whole space. We have
\begin{thm}
Let $T>0$, $\eps>0$, $\Omega = \RR^N$ and the assumptions on $V$, $K$ and the constants $c_{11}$ and $c_{22}$ as in Theorem \ref{thm:existence}. Then, the system \eqref{eq:pdebnd1}--\eqref{eq:pdebnd2}, 
with initial conditions
\begin{align*}
r(\cdot,\, 0)=r_0 \quad \text{and} \quad b(\cdot,\, 0)=b_0 
\end{align*}
with $(r_0,b_0)\in\mathcal{M}$ and the additional requirement that the second moment of both $r_0$ and $b_0$ are bounded. Then there exists a weak solution $(r,\, b)$ in 
\begin{align*}
W=(L^2((0,T),\, L^2(\RR^N))\cap H^1((0,\, T),\, H^{-1}(\RR^N)))^2 
\end{align*}
such that 
\begin{align*}
\rho,\, \sqrt{1-\rho}r,\, \sqrt{1-\rho}b \in L^2((0,\,T),\, H^1(\RR^N)), 
\end{align*}
and furthermore $0 \leq r,\, b$ and $\rho \leq 1$ almost everywhere.
\end{thm}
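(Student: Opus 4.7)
The plan is to exhaust $\RR^N$ by balls $B_R=B_R(0)$, apply Theorem \ref{thm:pdebnd} on each $B_R$, derive estimates that are uniform in $R$ via the entropy identity including the confining potential, and then pass to the limit $R\to\infty$.

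\textbf{Step 1: Approximating problems.} For each $R>0$, restrict the initial data to $B_R$ (keeping $0\le r_0,b_0$, $\rho_0\le 1$, and the finite second moment), and let $(r_R,b_R)$ be a weak solution on $B_R\times(0,T)$ granted by Theorem \ref{thm:pdebnd}, extended by zero to $\RR^N$. Each $(r_R,b_R)$ satisfies the weak formulation tested against $\phi\in H^1(B_R)$, and all the pointwise bounds $0\le r_R,b_R$, $r_R+b_R\le 1$ persist after extension.

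\textbf{Step 2: Uniform estimates via the full energy $E^\eps$.} The crucial observation is that the derivation of the entropy inequality \eqref{entropyinequality2} carries over verbatim when $F^E$ is replaced by $E^\eps=\eps F^E+F^0+F^C$, since we only integrate by parts and use Young's inequality; the confining term contributes $\int(r+b)V\,dx$ on the left-hand side and is controlled on the right-hand side by $E^\eps(r_0,b_0)$, which is finite by assumption \textbf{(V1)} together with the bounded second moments of $r_0,b_0$. This yields, uniformly in $R$,
\begin{align*}
\sup_{t\in[0,T]}\int_{\RR^N}(r_R+b_R)V\,dx + \|\rho_R\|_{L^2(0,T;H^1)}^2 + \|\sqrt{1-\rho_R}\nabla\sqrt{r_R}\|_{L^2L^2}^2+\|\sqrt{1-\rho_R}\nabla\sqrt{b_R}\|_{L^2L^2}^2 \le C,
\end{align*}
and by \textbf{(V1)} the first term gives a uniform second-moment bound. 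As in Lemma \ref{estimates2}, the weak formulation then yields a uniform bound on $\partial_t r_R,\partial_t b_R$ in $L^2(0,T;H^{-1}_{loc}(\RR^N))$.

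\textbf{Step 3: Compactness and tightness.} On every ball $B_M$, the Aubin--Lions type argument used in the proof of Theorem \ref{thm:pdebnd} delivers, along a subsequence, $\rho_R\to \rho$ and $\sqrt{1-\rho_R}r_R\to\sqrt{1-\rho}\,r$ strongly in $L^2(0,T;L^2(B_M))$, together with $r_R\rightharpoonup^{*} r$, $b_R\rightharpoonup^{*} b$ in $L^\infty(L^\infty)$. A Cantor diagonal over $M\to\infty$ yields convergence on $\RR^N$. The uniform second-moment bound enforces tightness:
\begin{align*}
\int_{|x|\ge M}(r_R+b_R)\,dx \le \frac{C}{M^2},
\end{align*}
so no mass escapes to infinity and the strong convergence upgrades to $L^2(0,T;L^2(\RR^N))$ for $\rho_R$ and the product $\sqrt{1-\rho_R}r_R$, similarly for $b_R$.

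\textbf{Step 4: Passage to the limit.} The linear and local nonlinear terms pass to the limit exactly as in Theorem \ref{thm:pdebnd}. The only point requiring care is the nonlocal drift $r_R(1-\rho_R)\nabla(c_{11}K*r_R-K*b_R)$. Using admissibility of $K$ and the uniform $L^1\cap L^\infty$ bounds together with the tightness from Step 3, Young's inequality for convolutions shows that $\nabla K* r_R \to \nabla K* r$ strongly in $L^2(0,T;L^2_{loc}(\RR^N))$; combining with the strong convergence of $\sqrt{1-\rho_R}r_R$ one passes to the limit in every test against a compactly supported $\phi\in H^1(\RR^N)$. Regularity of $r,b$ and the bounds $0\le r,b$, $\rho\le 1$ are inherited from the approximating sequence by lower semicontinuity.

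\textbf{Main obstacle.} The delicate point is Step 3/4 on the unbounded domain: one has to combine the tightness coming from $V$ with the local Aubin--Lions argument in a way that is strong enough to pass to the limit in the quadratic nonlocal term, since $K$ (Coulomb) is not globally integrable. The second-moment bound from $F^C$ is exactly what makes this possible and is the reason for the extra assumption on the initial data.
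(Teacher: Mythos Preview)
Your proposal is correct and follows essentially the same route as the paper: approximate by solving on balls $B_R$, use the entropy inequality together with the confining potential to get uniform-in-$R$ gradient bounds and a second-moment (tightness) bound, then combine local Aubin--Lions compactness with tightness to pass to the limit. The paper phrases the compactness step as ``second moments make $H^1\hookrightarrow L^2$ compact on $\RR^N$'' while you spell it out via local compactness plus a Cantor diagonal and an explicit tail estimate, but this is the same mechanism; likewise the paper keeps $F^0$ on the right-hand side of the entropy inequality (bounded via $L^1\cap L^\infty$ interpolation) whereas you absorb it into the full energy $E^\eps$, which is an inessential variation.
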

\begin{proof}
The proof is based on the same a priori estimates as in the bounded domain case and thus we only sketch the additional arguments. Indeed, the main ingredient is the entropy inequality, \eqref{entropyinequality2} which, due to the uniform $L^1 \cap L^\infty$-bounds on $r$ and $b$ also holds on $\RR^N$. This in particular implies the second moment bound on $r$ and $b$ which in turn yields the compactness of the embedding from $H^1(\RR^N)$ to $L^2(\RR^N)$. Indeed, taking a sequence $u_n \in H^1(\RR^N)$ we have, for some $u \in L^2$ and fixed $R>0$ that
\begin{align*}
 \int_{\RR^N} (u-u_n)^2\;dx = \int_{B_R(0)} (u-u_n)^2\;dx + \int_{\RR^N \setminus B_R(0)} (u-u_n)^2\;dx.
\end{align*}
Using the $L^\infty$-bound on $u_n$ and $u$, we can estimate the second term on the right hand side and obtain
\begin{align*}
 \int_{\RR^N} (u-u_n)^2\;dx \le \int_{B_R(0)} (u-u_n)^2\;dx + \frac{1}{R^2}\|u-u_n\|_{L^\infty(\RR^N)}\int_{\RR^N \setminus B_R(0)} |x|^2(u-u_n)\;dx.
\end{align*}
Taking the $\limsup_{n\to\infty}$, we see that the first term on the right hand side vanishes due to the compactness of $H^1(B_R(0)) \hookrightarrow L^2(B_R(0))$ for fixed $R$. Taking, in a second step the limit $R\to \infty$ yields the assertion.\\
Now we generate a sequence of approximate solutions $r_n$ and $b_n$ by solving the problem
\begin{align} \label{eq:weakbn}
&\int_0^T \int_{B_n(0)} \left(\begin{array}{c}
  \partial_t r_n\\
  \partial_t b_n
  \end{array}  \right) \cdot \left(
\begin{array}{c}
\Phi_1\\
\Phi_2\\
\end{array}\right)\, dx dt + \int_0^T\int_{B_n(0)} \left(
\begin{array}{c}
(1-\rho_n)\nabla r_n + r_n\nabla \rho_n\\
(1-\rho_n)\nabla b_n + b_n\nabla \rho_n
\end{array}
\right) \cdot \left(
\begin{array}{c}
\nabla \Phi_1\\
\nabla \Phi_2
\end{array}
\right) \, dx dt \nonumber \\
&\quad + \int_0^T\int_{B_n(0)} H(r_\tau,\, b_\tau)\left(
\begin{array}{c}
\nabla \Phi_1\\
\nabla \Phi_2
\end{array}
\right)\, dxdt = 0,
\end{align}
where we use restrictions of $r_0$ and $b_0$ to $B_n(0)$ as initital conditions. Due to the two ingredients, namely entropy inequality and compactness, we can pass to the limit exactly the same way as we did for time discretization in the proof of the previous theorem. The bounds of the right hand side of the weak formulation \eqref{eq:weakbn}, again together with the second moment bound, then also yield the $H^{-1}(\RR^N)$-bounds for the time derivatives.
\end{proof}
For $D=1$, we have the following regularity result for $\rho$ which may serve as a first step in proving uniqueness of solutions via a fixed point argument in that case.
\begin{prop}[Regularity of $\rho$ for $D=1$] Assume that the diffusion coefficient $D$ is one, and let $(r,b)$ be a weak solution to \eqref{eq:pdebnd1}--\eqref{eq:pdebnd2}. Then, on a bounded Lipschitz domain $\Omega$, there are positive constants $C_1, C_2$ such that the sum $r+b=\rho$ satisfies the following a priori estimate
\begin{align*}
 \|\rho\|_{L^\infty(0,T;W^{1,p}(\Omega))} \le C_1 + C_2 \|\rho_0\|_{W^{1,p}(\Omega)},
\end{align*}
for all $2 \le p < \infty.$
\end{prop}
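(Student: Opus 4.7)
The key observation is that for $D=1$ the cross-diffusion collapses upon summation. Adding equations \eqref{eq:pdebnd1} and \eqref{eq:pdebnd2}, the Laplacian-type terms satisfy
\begin{align*}
\eps[(1-\rho)\nabla r + r\nabla\rho] + \eps[(1-\rho)\nabla b + b\nabla\rho] = \eps[(1-\rho)\nabla\rho + \rho\nabla\rho] = \eps\nabla\rho,
\end{align*}
so the total density satisfies the \emph{linear} parabolic equation
\begin{align*}
\partial_t\rho - \eps\Delta\rho = \nabla\cdot F,
\end{align*}
with
\begin{align*}
F := \rho(1-\rho)\nabla V + (1-\rho)\bigl[c_{11}\,r\,\nabla K\ast r - r\,\nabla K\ast b + c_{22}\,b\,\nabla K\ast b - b\,\nabla K\ast r\bigr],
\end{align*}
together with the conservative no-flux condition $(\eps\nabla\rho + F)\cdot n = 0$ on $\partial\Omega$ obtained by summing \eqref{eq:noflux}.

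The next step is to show that $F\in L^\infty((0,T)\times\Omega)$ with a bound depending only on the a priori data. Theorem~\ref{thm:pdebnd} guarantees $0\le r,b$ and $\rho\le 1$ a.e.; assumption (V1) combined with $V\in W^{1,\infty}_{\mathrm{loc}}$ and the boundedness of $\Omega$ gives $\nabla V\in L^\infty(\Omega)$; and Young's convolution inequality together with (K1) yields $\nabla K\ast r,\nabla K\ast b \in L^\infty(\Omega)$ since $r,b\in L^\infty\cap L^1$. Hence every factor of $F$ is uniformly bounded and $\|F\|_{L^\infty((0,T)\times\Omega)} \le C$ for a constant $C$ depending only on $\eps$, $T$, $\|\nabla V\|_\infty$, $\|\nabla K\|_{L^1(\Omega-\Omega)}$, $c_{11}$, $c_{22}$, and the masses.

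The remaining step is standard linear parabolic regularity applied to the Neumann problem
\begin{align*}
\partial_t\rho - \eps\Delta\rho = \nabla\cdot F \quad\text{in }\Omega,\qquad (\eps\nabla\rho + F)\cdot n = 0 \quad\text{on }\partial\Omega,\qquad \rho(\cdot,0) = \rho_0.
\end{align*}
With $F\in L^\infty((0,T)\times\Omega)$ and $\rho_0\in W^{1,p}(\Omega)$, maximal regularity for the conservative Neumann heat problem on a bounded Lipschitz domain yields $\rho\in L^\infty(0,T;W^{1,p}(\Omega))$ with
\begin{align*}
\|\rho\|_{L^\infty(0,T;W^{1,p})} \le C_1 + C_2\|\rho_0\|_{W^{1,p}(\Omega)},
\end{align*}
where $C_1$ depends on $T$, $\eps$, $\Omega$ and the $L^\infty$ bound on $F$, and $C_2$ on $\eps$, $T$ and $\Omega$. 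Concretely, one can argue via Duhamel's formula combined with $L^p\to L^p$ bounds for the Neumann heat semigroup $e^{t\eps\Delta_N}$ and Calder\'on--Zygmund estimates for the parabolic singular integral $F\mapsto \nabla\int_0^t \nabla e^{(t-s)\eps\Delta_N}F\,ds$, or alternatively through an energy estimate for the spatially differentiated equation tested against $|\nabla\rho|^{p-2}\nabla\rho$ after straightening the boundary.

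The main obstacle is essentially bookkeeping rather than substantive analysis: the cancellation that makes the evolution of $\rho$ linear fails for $D\neq 1$, which is precisely why the proposition is restricted to $D=1$. Once linearity is exploited, the bound is classical; the only care needed concerns invoking the correct $L^p$-maximal regularity statement for the conservative Neumann problem on a Lipschitz domain, which is available in standard references.
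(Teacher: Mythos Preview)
Your proof is correct and follows essentially the same line as the paper: add the two equations, observe that for $D=1$ the cross-diffusion collapses to $\eps\Delta\rho$, bound the remaining divergence term in $L^\infty((0,T)\times\Omega)$ using the box constraints and Young's convolution inequality, and then invoke $L^p$ parabolic regularity for the linear heat equation. Your write-up is in fact slightly more careful than the paper's, since you retain the potential contribution $\rho(1-\rho)\nabla V$ in $F$ and make the Neumann boundary condition explicit.
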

\begin{proof} For $D=1$, we can add equations \eqref{eq:pdebnd1} and \eqref{eq:pdebnd2} to obtain an equation for $\rho$ given as
\begin{align}\label{eq:eqrho}
\partial_t\rho - \eps \Delta \rho = -\nabla\cdot h 
\end{align}
with 
\begin{align*}
h :=   r(1- \rho)(c_{11}\nabla K\ast  r - K\ast  b) +  b(1- \rho)(c_{22}\nabla K \ast  b - K\ast  r).
\end{align*}
and initial datum
\begin{align*}
\rho(\cdot,0) = \rho_0\text{ in }W^{1,p}(\Omega).
\end{align*}
Using the regularity of $r$ and $b$ as well as the properties of $K$, we can estimate $h$ as follows
\begin{align*}
\|h(\cdot,t)\|_{L^\infty(\Omega)} &\le \frac{1}{4}\|\nabla K\|_{L^1(\Omega)}\left [(c_{11}-1)\|r(\cdot,t)\|_{L^\infty(\Omega)} + (c_{22}-1)\|b(\cdot,t)\|_{L^\infty(\Omega)}\right] = C
\end{align*}
Since the constant on the right hand side does not depend on $t$ we can take the supremum over all $t\in[0,T]$ and conclude that $h\in L^\infty((0,T);L^\infty(\Omega))$. Thus applying the $L^p$ theory for linear parabolic equation, see e.g. \cite[Chapter VII]{Lieberman1996}, yields the assertion.
\end{proof}
\begin{rem}[Minimizers as stationary solutions] By remark \ref{rem:regmin} we know that minimizers of the energy $E^\eps$ are in fact in the space $[W^{1,\infty}(\Omega)]^2$. Therefore they can be inserted in the weak formulation of the stationary equation and are thus also stationary solutions of the PDE system \eqref{eq:pdebnd1}--\eqref{eq:pdebnd2} with no flux boundary conditions.
\end{rem}



\section{Numerical Simulations}\label{sec:numerics}
This Section deals with a numerical algorithm to solve the the PDE system \eqref{eq:pdebnd1}--\eqref{eq:pdebnd2} supplemented with no flux boundary conditions. We use the same P$1$ finite element method as described in \eqref{sec:nummin}. To discretize in time, we use the following implicit-explicit scheme: Given inital values $r^0,\,b^0 \in V_h$ and denoting the current iterates by $r^n,b^n$ (with $\rho^n=r^n+b^n$), we obtain new iterates $r^{n+1},\,b^{n+1}$ by solving the linear system
\begin{align*}
 \int_\Omega r^{n+1} \varphi\;dx &= \int_\Omega r^n \varphi\;dx + \tau \int_\Omega \eps\left[ (1-b^n)\nabla r^{n+1} + r^n\nabla b^{n+1}\right]\cdot \nabla \varphi\;dx \\
 &+ (1-\rho^n)\nabla (c_{11} K\ast r^n - K\ast b^n + V)\cdot \nabla \varphi\;dx\\
 \int_\Omega b^{n+1} \varphi\;dx &= \int_\Omega b^n \varphi\;dx + \tau \int_\Omega \eps\left[ (1-r^n)\nabla b^{n+1} + b^n\nabla r^{n+1}\right]\cdot \nabla \varphi\;dx \\
 &+ (1-\rho^n)\nabla (c_{22} K\ast b^n - K\ast r^n + V)\cdot \nabla \varphi\;dx.
\end{align*}
with time step size $\tau > 0$ given. Again, we use MATLAB for the implementation, both in one and two spatial dimensions.

\subsection{Examples in one spatial dimensions}
For all one-dimensional examples, we use the same potential $V$ as in Section \ref{sec:nummin}, defined in \eqref{eq:Vnum}
Furthermore, to decide when we reached a stationary state, we consider the relative $L^2$-error between two iterates, i.e
\begin{align*}
 \mathrm{err}_{L^2} := \|r^n - r^{n-1}\|_{L^2(V_h)} + \|b^n - b^{n-1}\|_{L^2(V_h)}
\end{align*}
and stop the iteration as soon as $\mathrm{err}_{L^2} < 1e-12$. We start by comparing the solutions for different values of $\eps$ to that of the ADMM scheme. As initial values we take 
\begin{align}
 r = b = \frac{1}{3}H_\gamma\left(\frac{1}{2}-|x|\right),\text{ with } \gamma = 0.001,
\end{align}
where $H^\gamma$ is the following approximation of the Heaviside function
\begin{align}
 H_\gamma(r) = \frac{1}{\pi}\left(\frac{\pi}{2}+\mathrm{atan}(\frac{r}{\gamma})\right),\quad \gamma > 0.
\end{align}
The results are visualized in Figure \ref{fig:diffeps1dpde} (left). In the top right picture the number of iterations until the error criterion is met are shown. On the bottom right, we plot, for a fixed time $t=19$ (and thus a fixed number of iterations), the relative error that is achieved for different values of $\eps$. As expected, both plots confirm that the dynamics become slower as $\eps$ becomes smaller - see the discussion about the relationship to systems of Cahn-Hilliards equations in the introduction.\\
 \begin{figure}
 \begin{center}
  \includegraphics[width=0.49\textwidth]{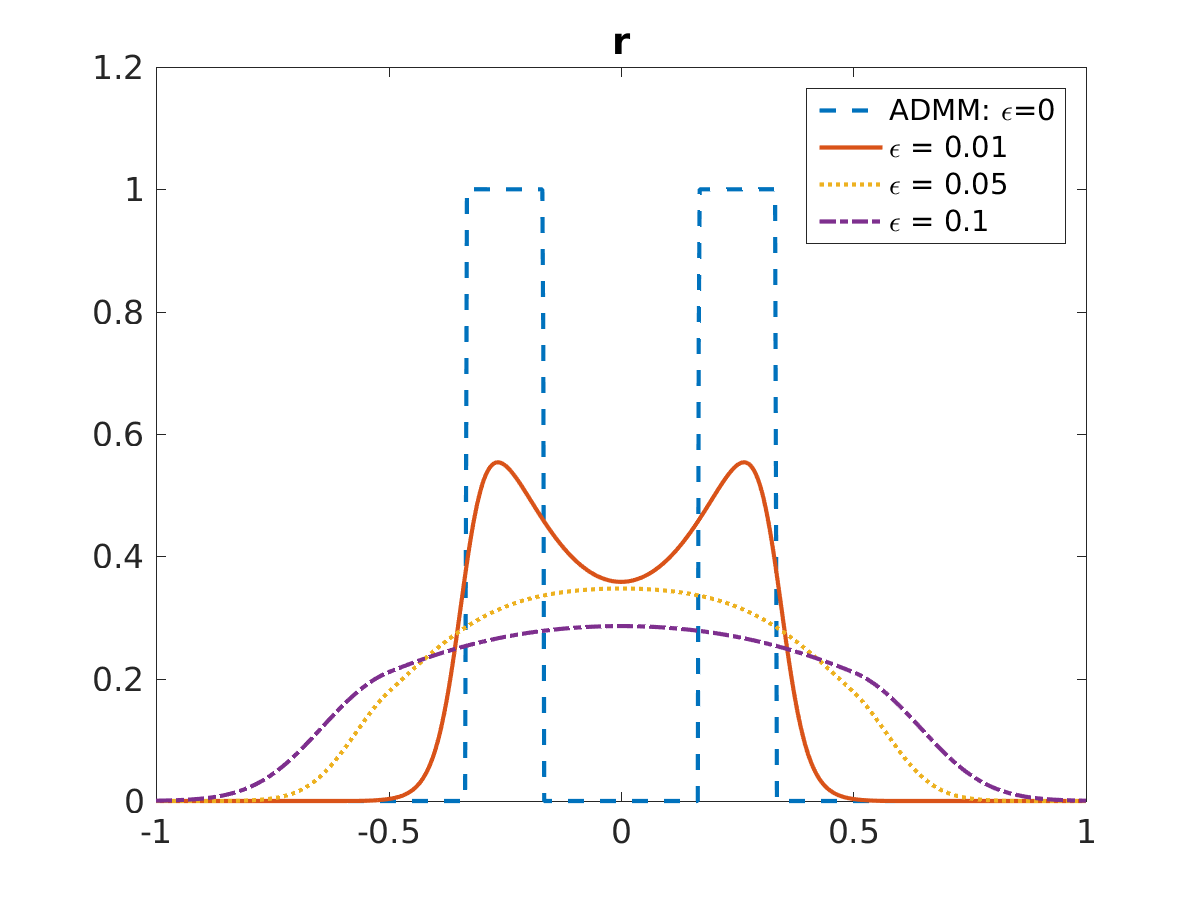}
  \includegraphics[width=0.49\textwidth]{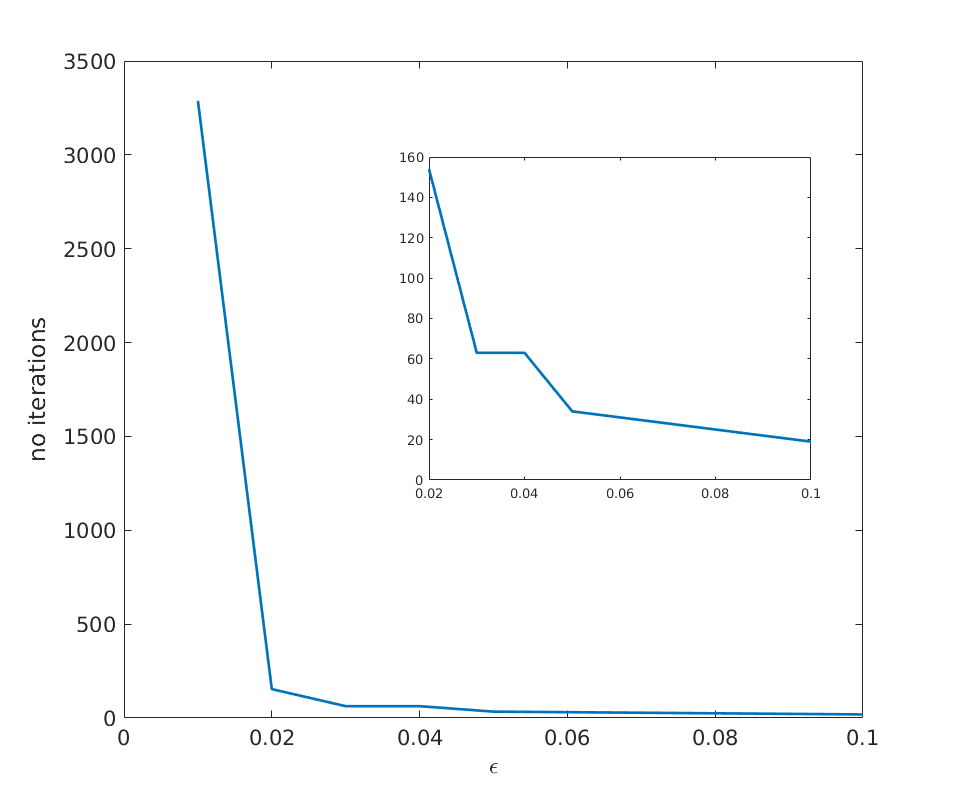}\\
  \includegraphics[width=0.49\textwidth]{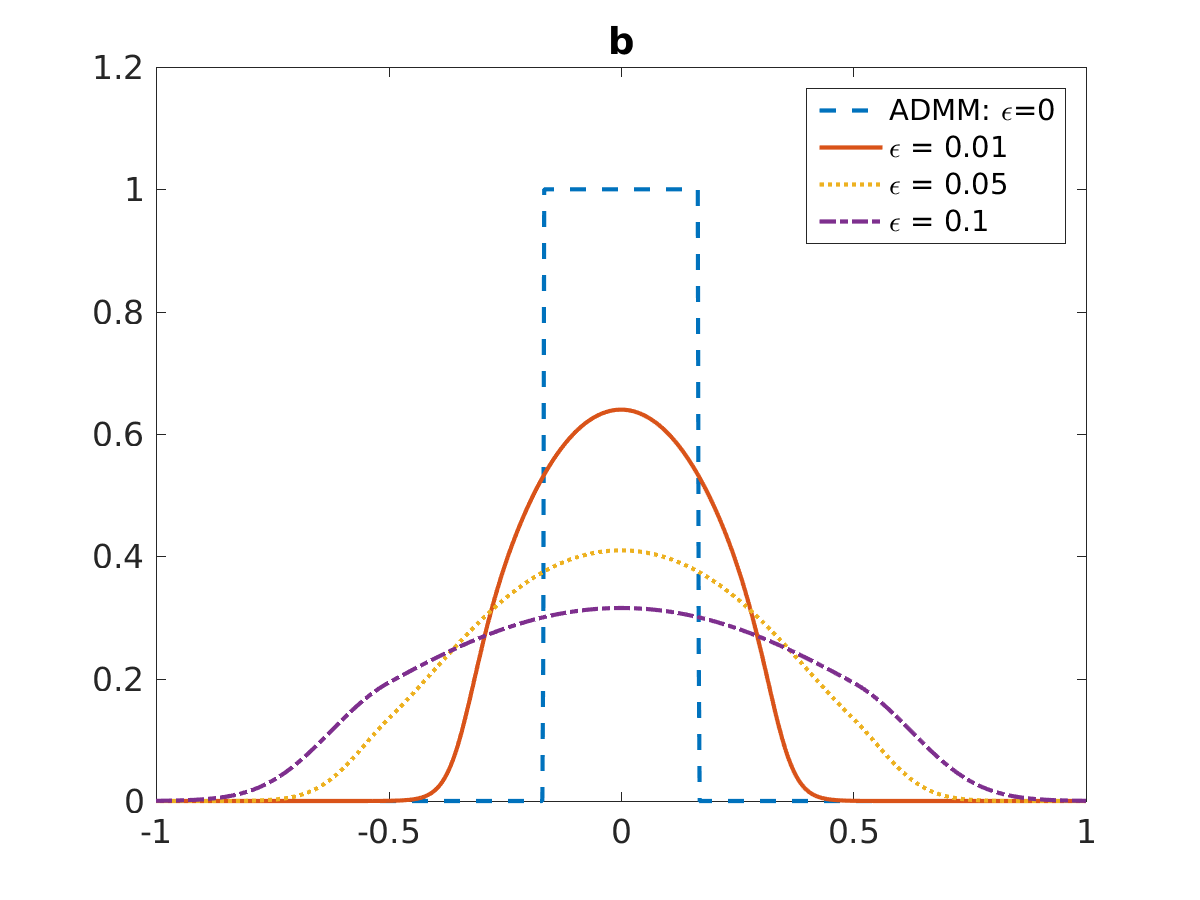}
  \includegraphics[width=0.49\textwidth]{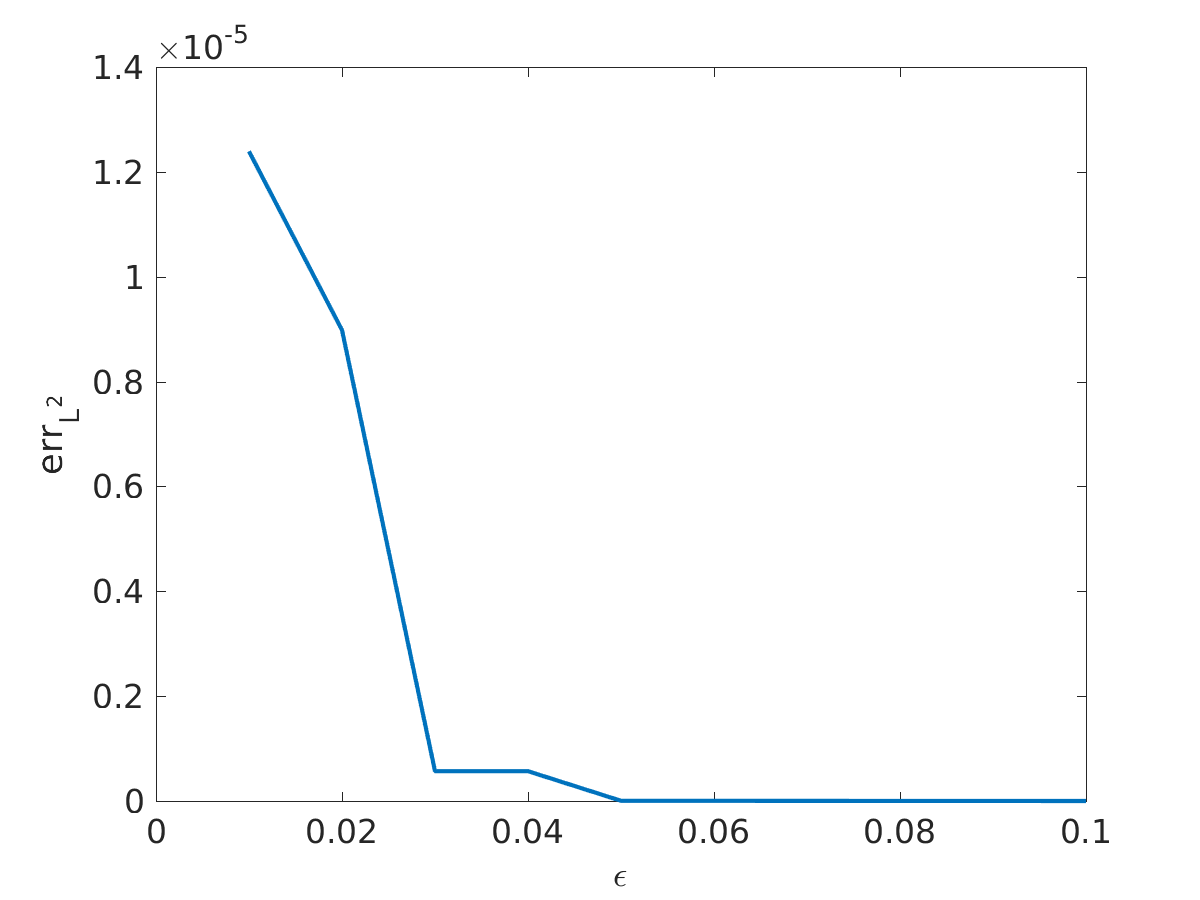}
  \end{center}
 \caption{(left) $r$ and $b$ solutions of the PDE with $\mathrm{err}_{L^2}<1e-12$. (Right) On the top: Value of $\eps$ versus number of iterations until error criterion is met, below: $\eps$ versus $\mathrm{err}_{L^2}$ at the fixed time $t=19$.}
 \label{fig:diffeps1dpde}
 \end{figure}
 As a next step, we examine the dynamical behaviour of solutions. To this end we consider a situation where initially, two compactly supported bumbs of both $r$ and $b$ are centered at $x=\pm 0.6$ in the domain, see Figure \ref{fig:dyninit}. All subsequent simulations are done for $\eps = 0.0002$ and $\tau = 0.0005$. First we examined the case $c_{11}=-2$ and $c_{22}=-0.5$ in which we observe unmixing of the two species first, before the two populations meet, see Figure \ref{fig:mixmeet}. In the case $c_{11}=-1$ and $c_{22}=-0.5$, we observe only a partial unmixing until the two densities meet, see Figure \ref{fig:mixmeet2}.
 \begin{figure}
 \begin{center}
  \includegraphics[width=0.4\textwidth]{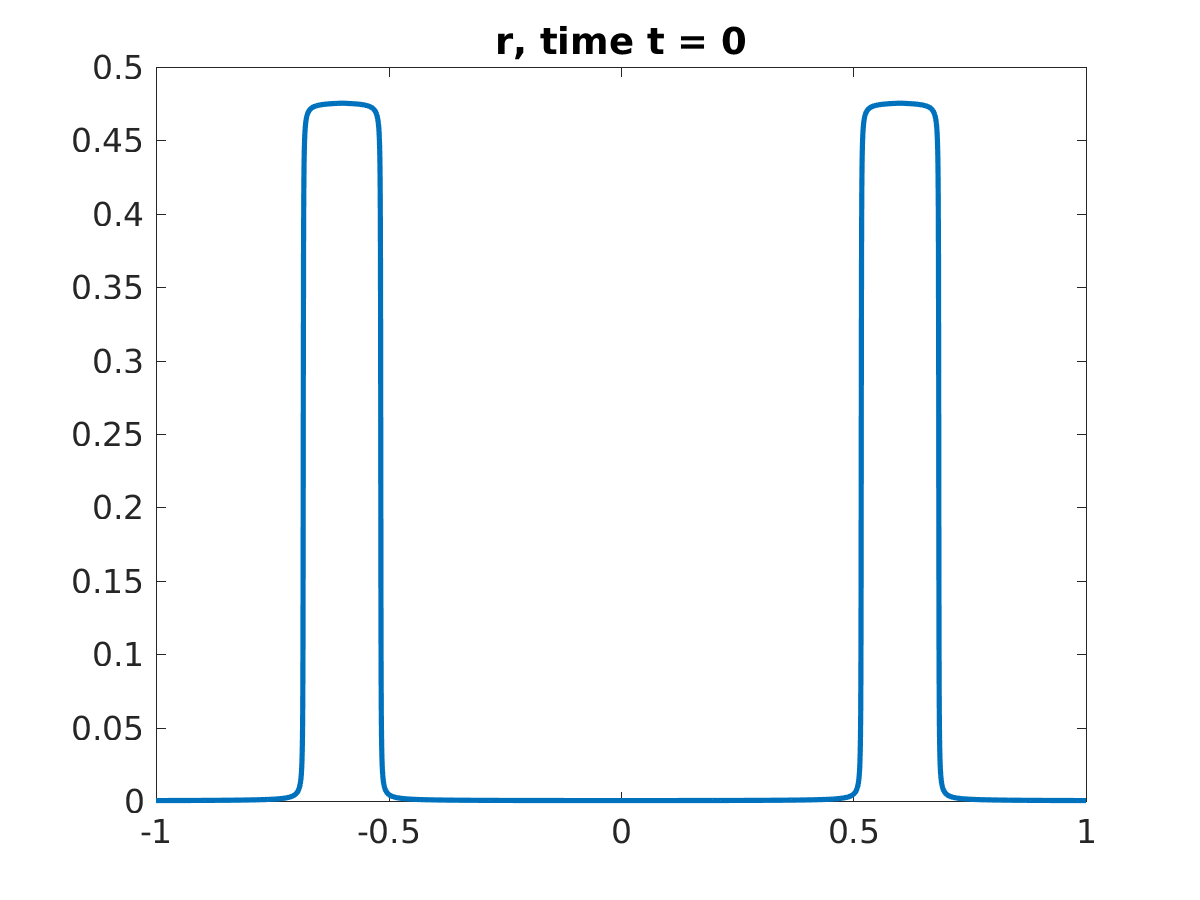}
  \includegraphics[width=0.4\textwidth]{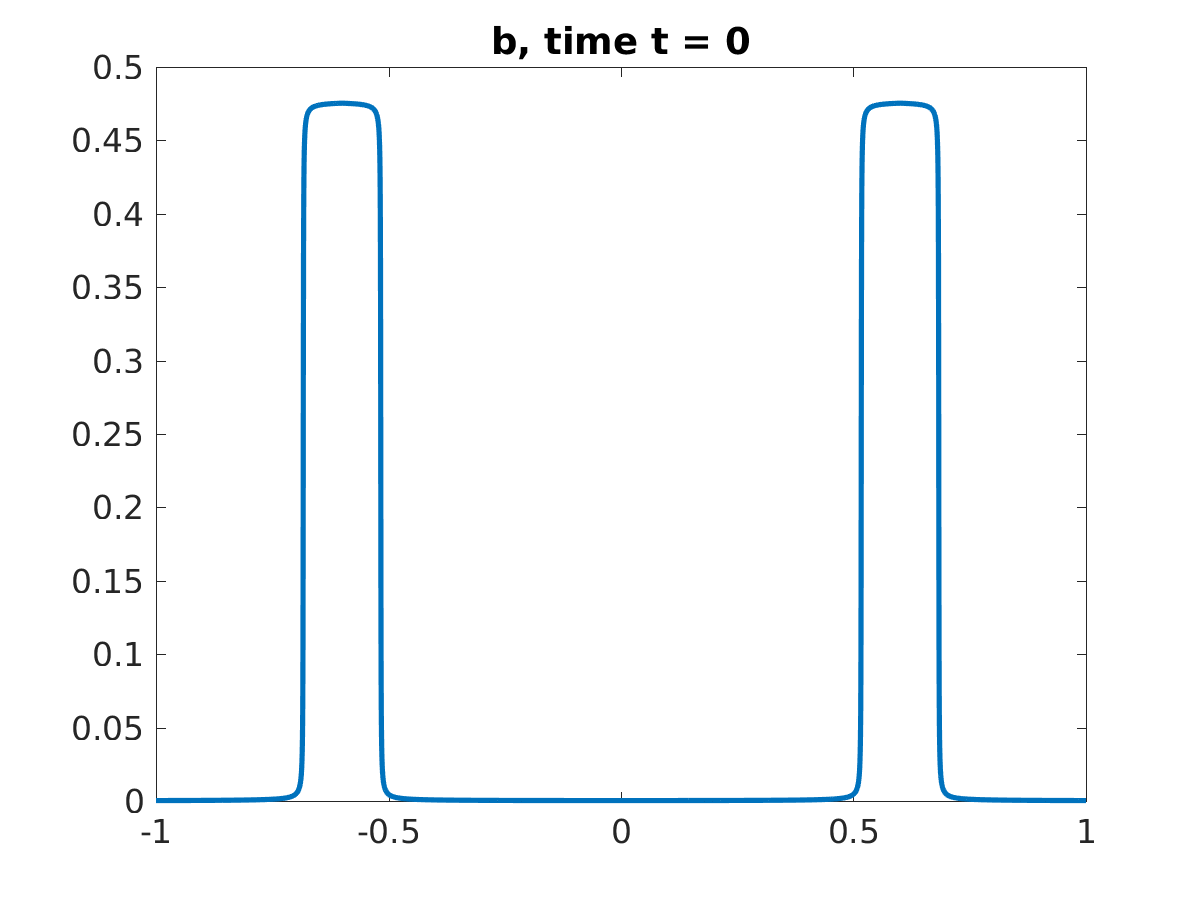}
  \end{center}
 \caption{Initial values for $r$ and $b$}
 \label{fig:dyninit}
 \end{figure} 
\begin{figure}
 \begin{center}
 \begin{subfigure}[t]{0.24\textwidth}
        \centering
        \includegraphics[width=\textwidth]{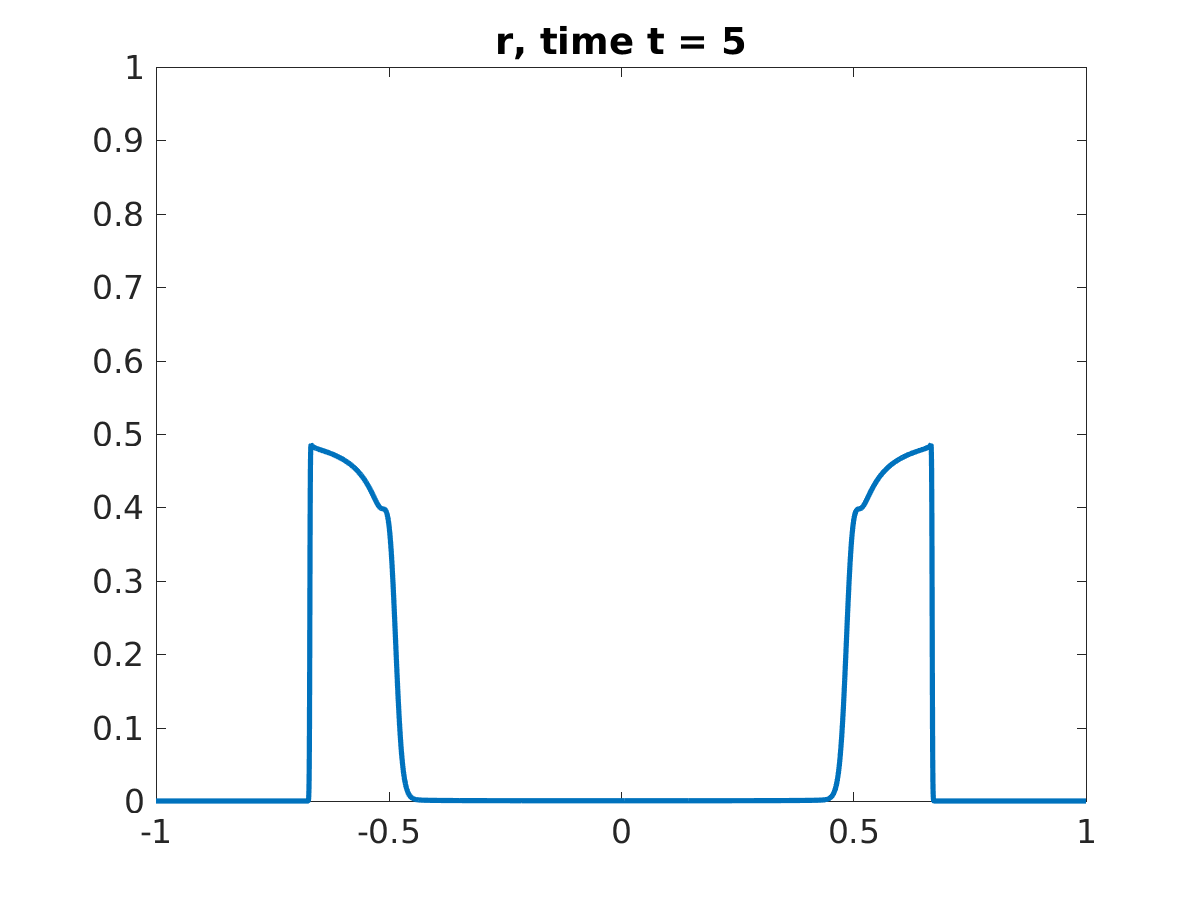}\\
        \includegraphics[width=\textwidth]{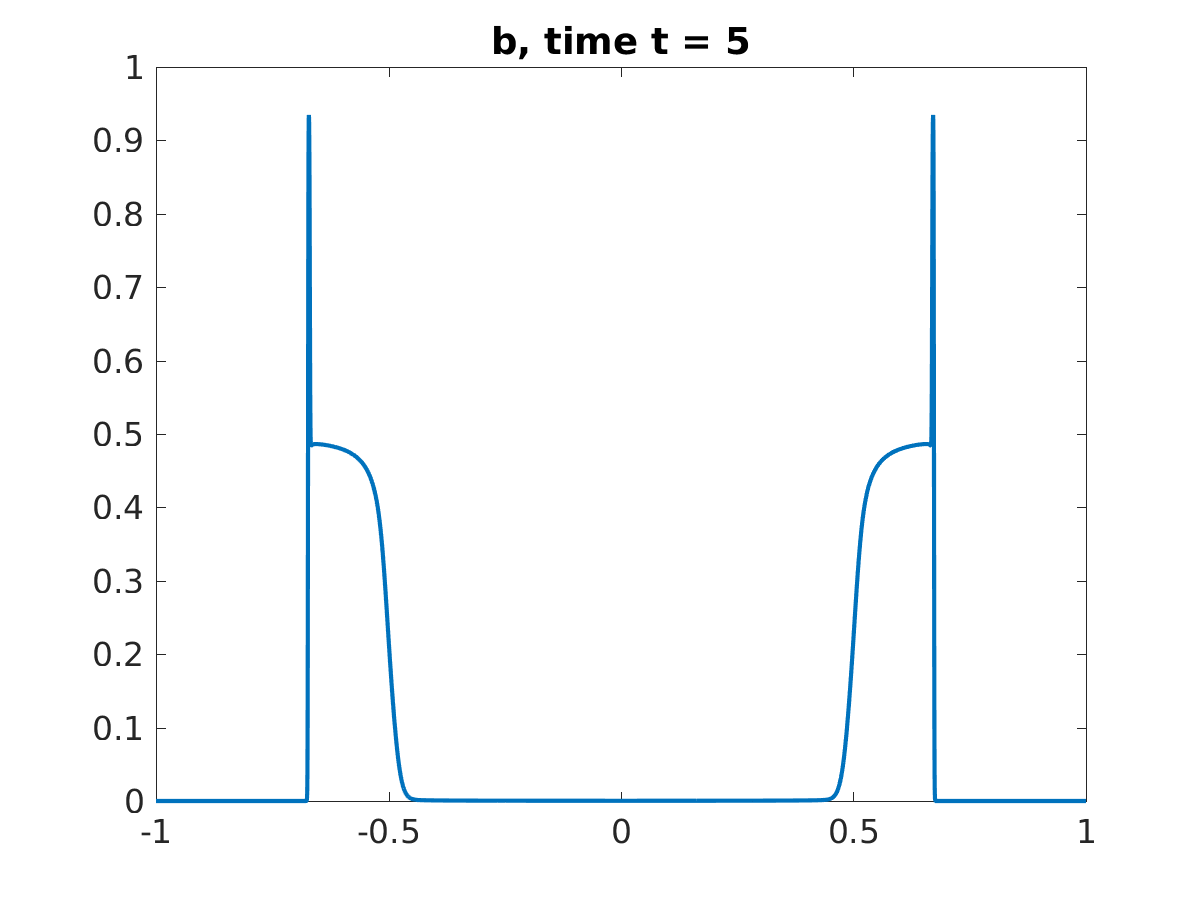}
        \caption{t = 5}
    \end{subfigure}%
    \begin{subfigure}[t]{0.24\textwidth}
        \centering
        \includegraphics[width=\textwidth]{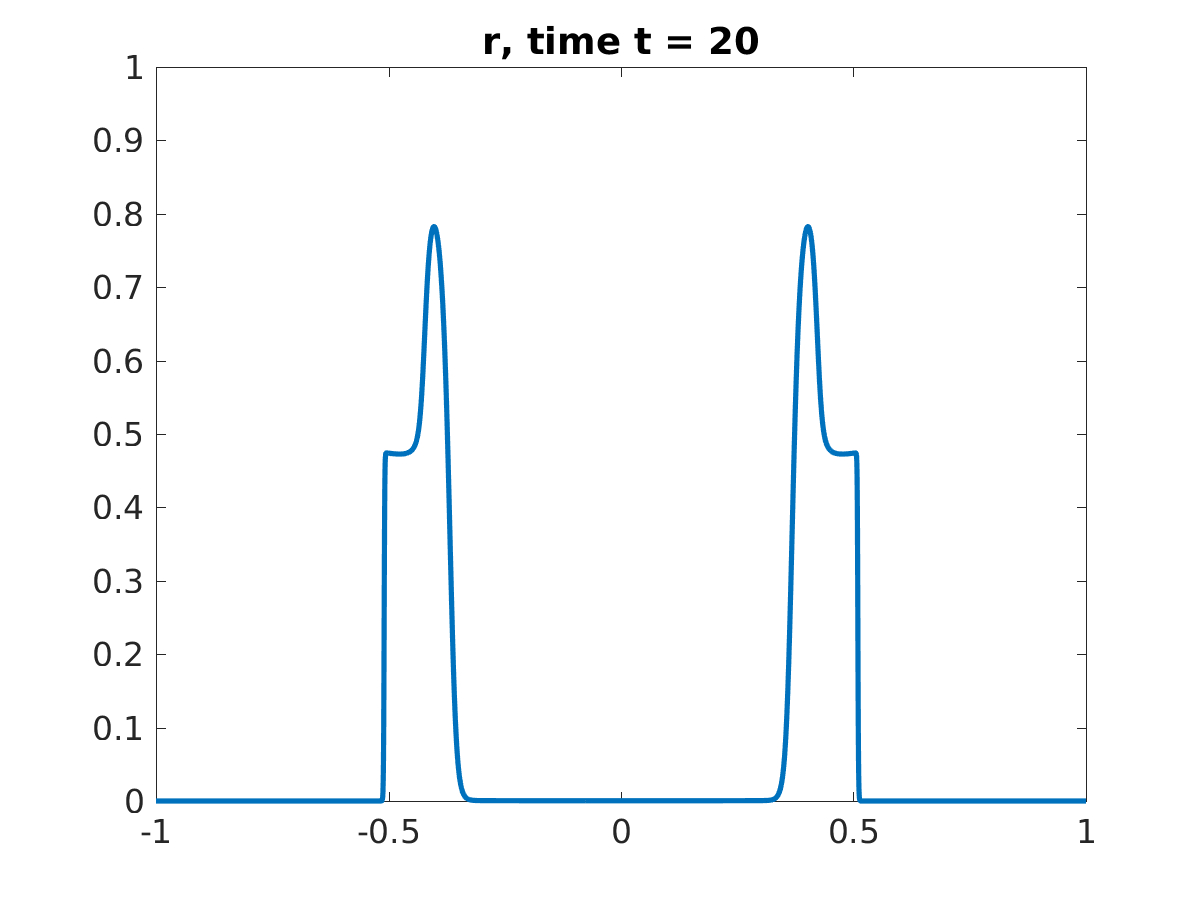}\\
        \includegraphics[width=\textwidth]{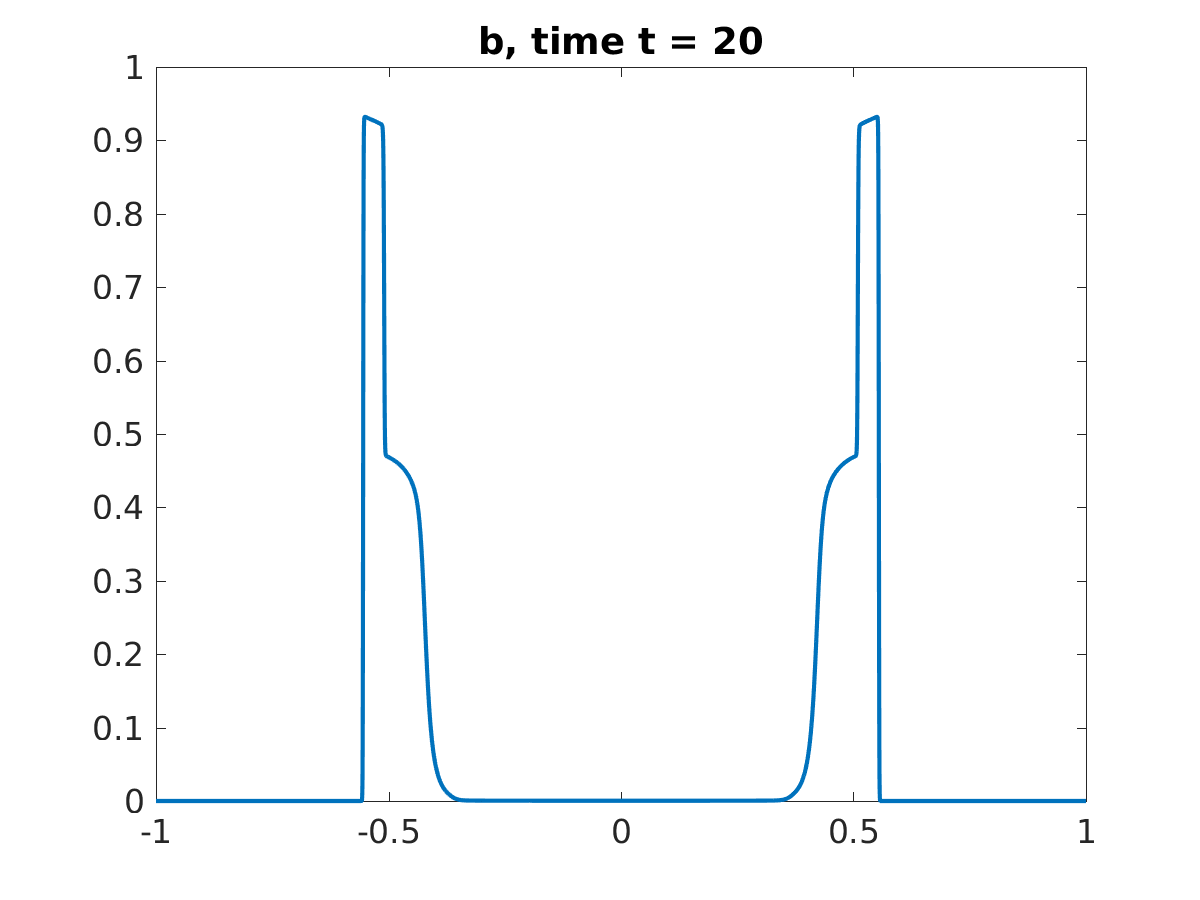}
        \caption{t = 20}
    \end{subfigure}%
    \begin{subfigure}[t]{0.24\textwidth}
        \centering
        \includegraphics[width=\textwidth]{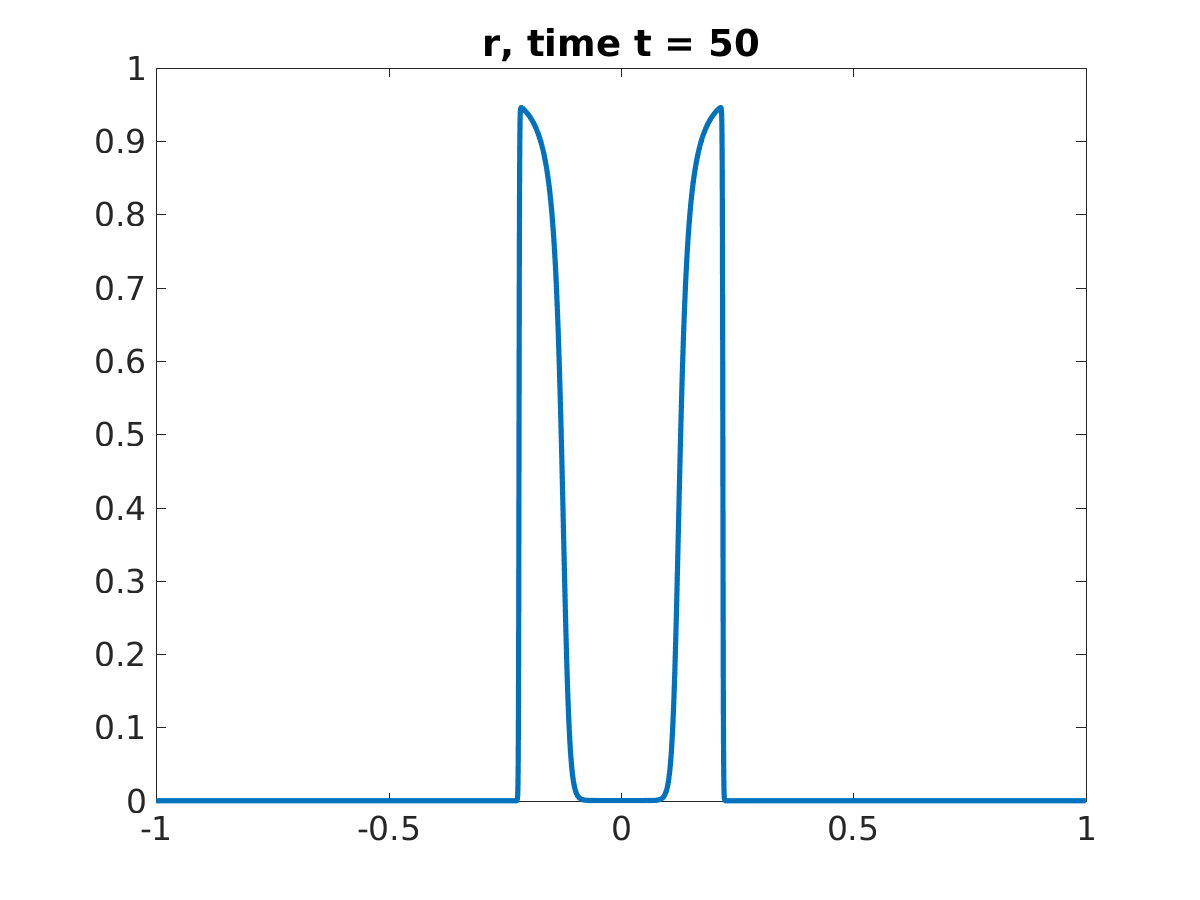}\\
        \includegraphics[width=\textwidth]{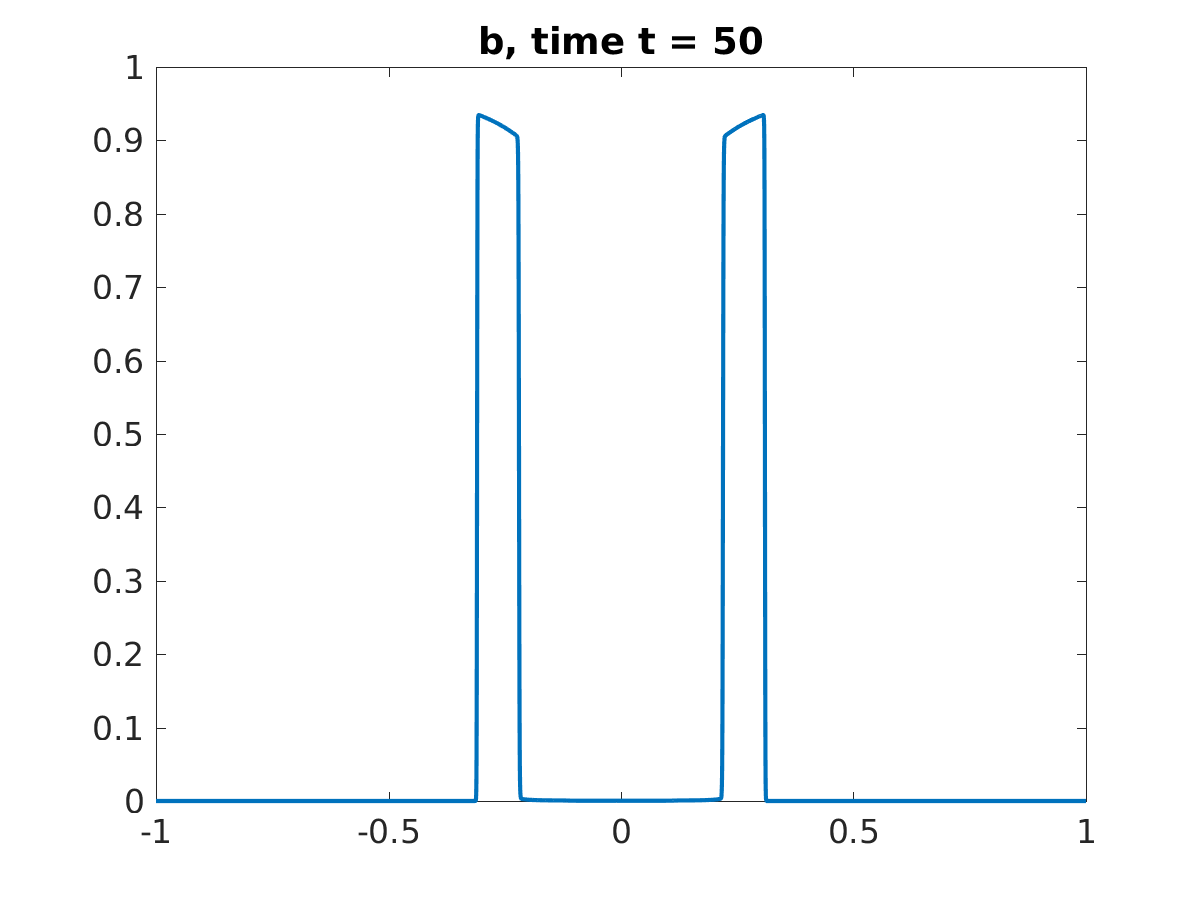}
        \caption{t = 50}
    \end{subfigure}%
    \begin{subfigure}[t]{0.24\textwidth}
        \centering
        \includegraphics[width=\textwidth]{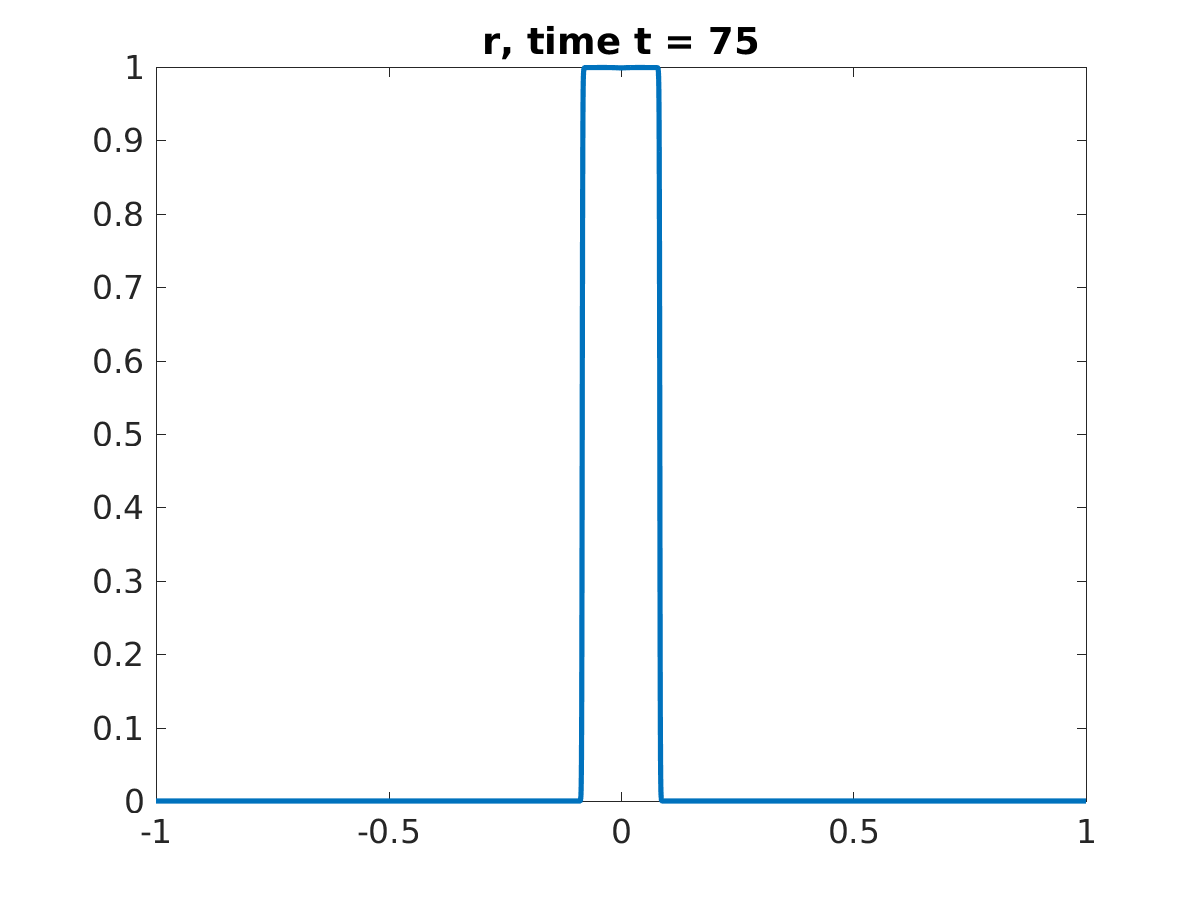}\\
        \includegraphics[width=\textwidth]{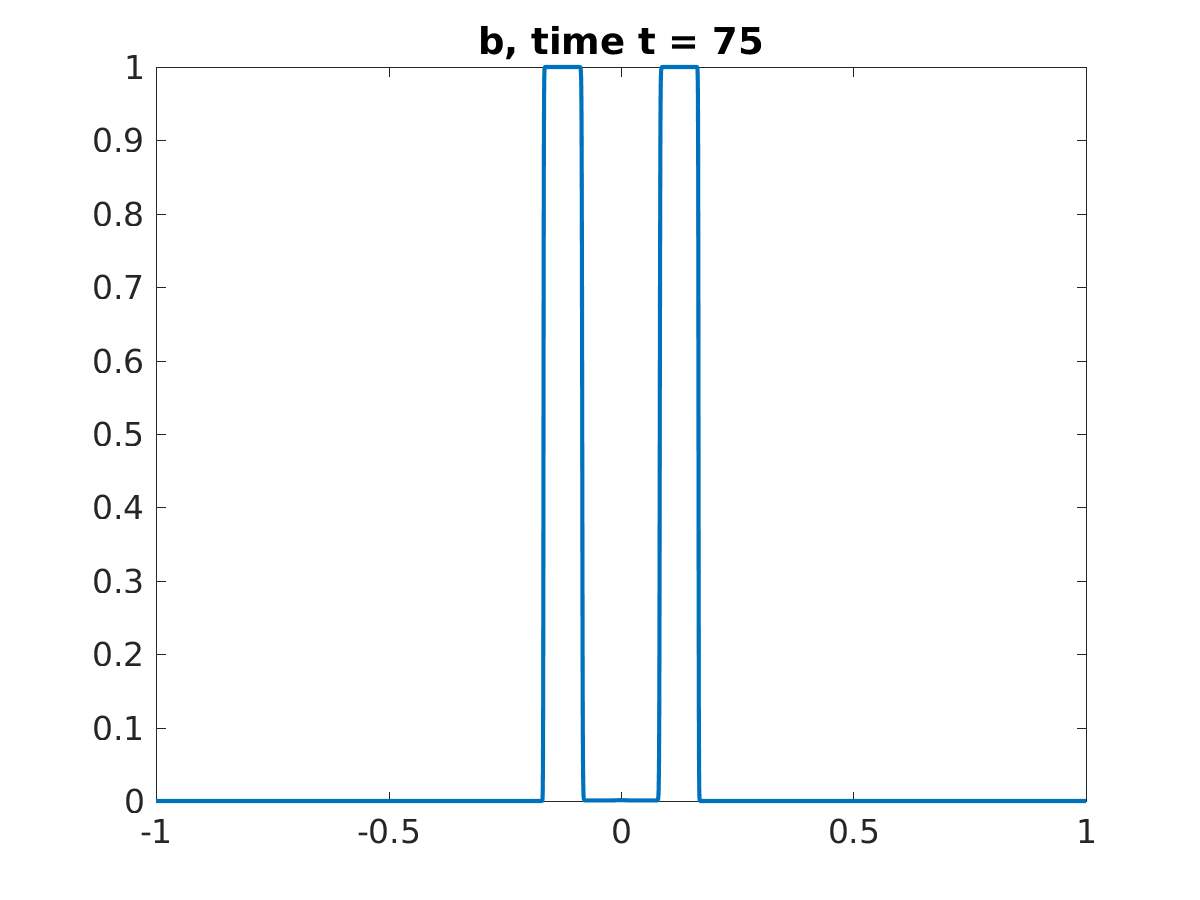}
        \caption{t = 75}
    \end{subfigure}%
 \end{center}
 \caption{The case $c_{11}=-2$ and $c_{22}=-0.5$. A video can be found at \cite{VidURL}}
 \label{fig:mixmeet}
 \end{figure} 
 \begin{figure}
  \begin{center}
 \begin{subfigure}[t]{0.24\textwidth}
        \centering
        \includegraphics[width=\textwidth]{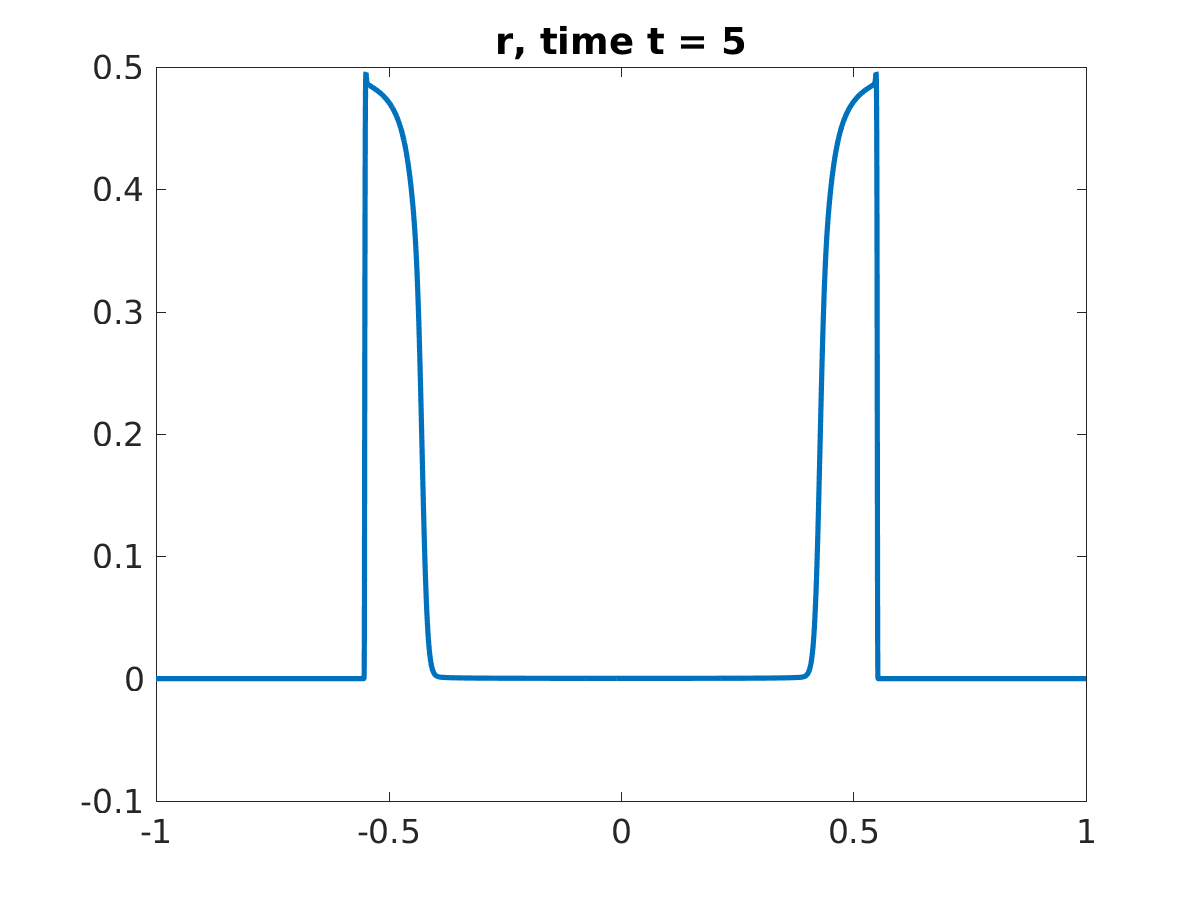}\\
        \includegraphics[width=\textwidth]{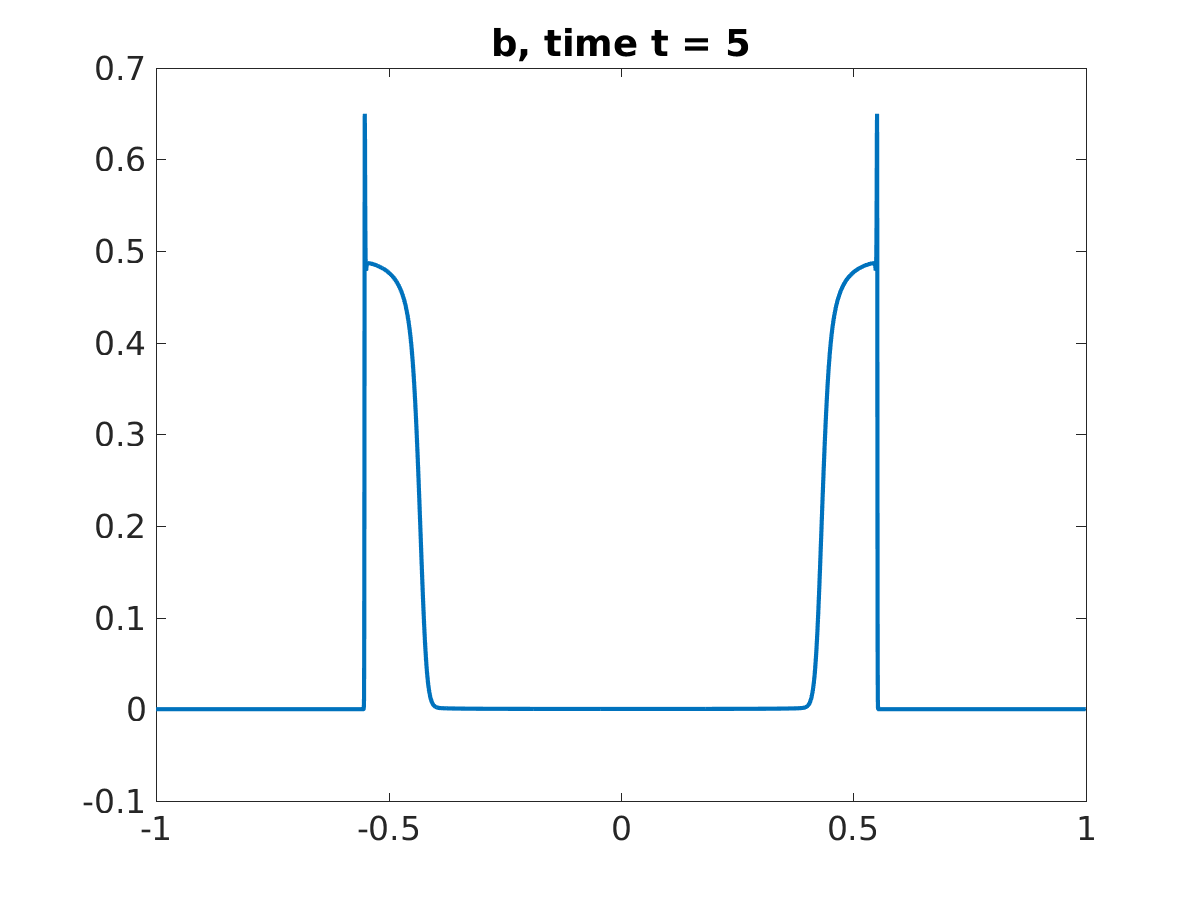}
        \caption{t = 5}
    \end{subfigure}%
    \begin{subfigure}[t]{0.24\textwidth}
        \centering
        \includegraphics[width=\textwidth]{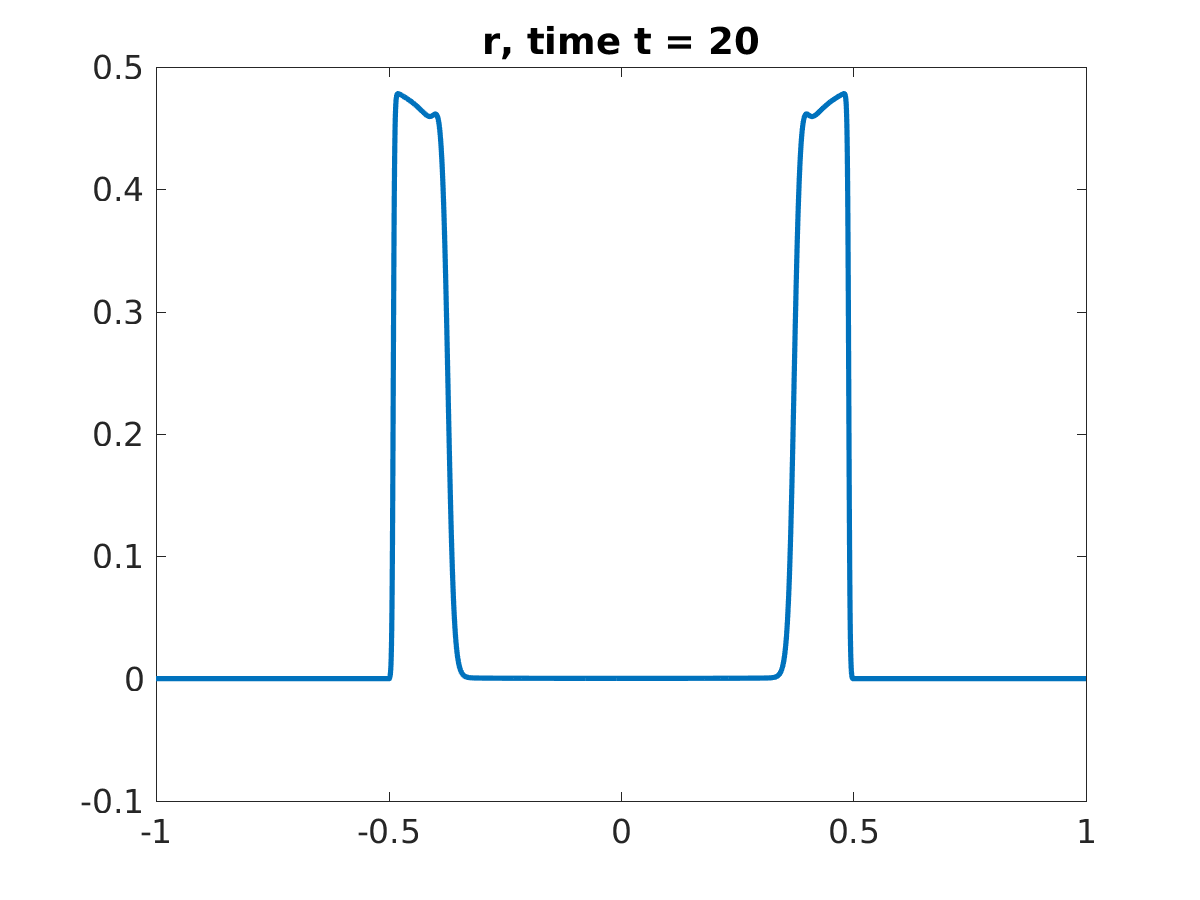}\\
        \includegraphics[width=\textwidth]{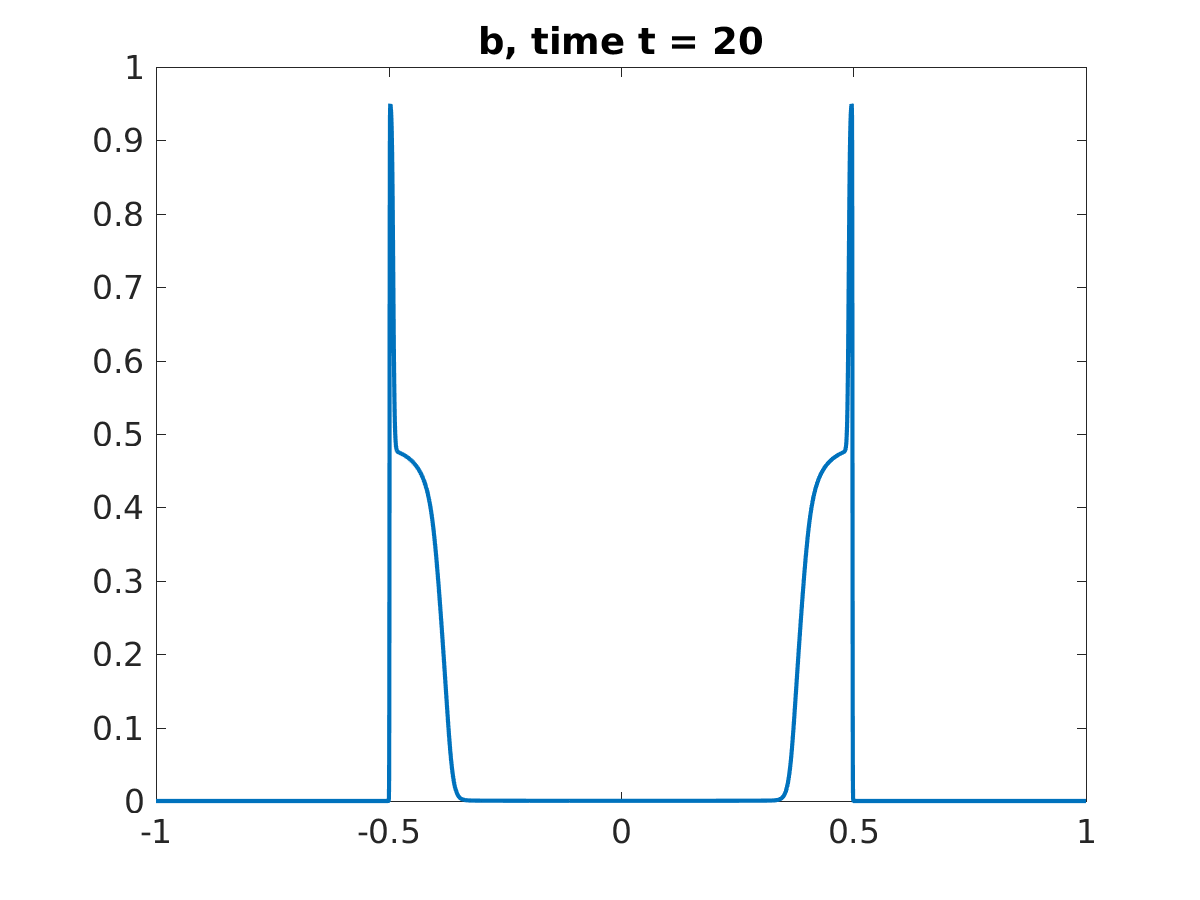}
        \caption{t = 20}
    \end{subfigure}%
    \begin{subfigure}[t]{0.24\textwidth}
        \centering
        \includegraphics[width=\textwidth]{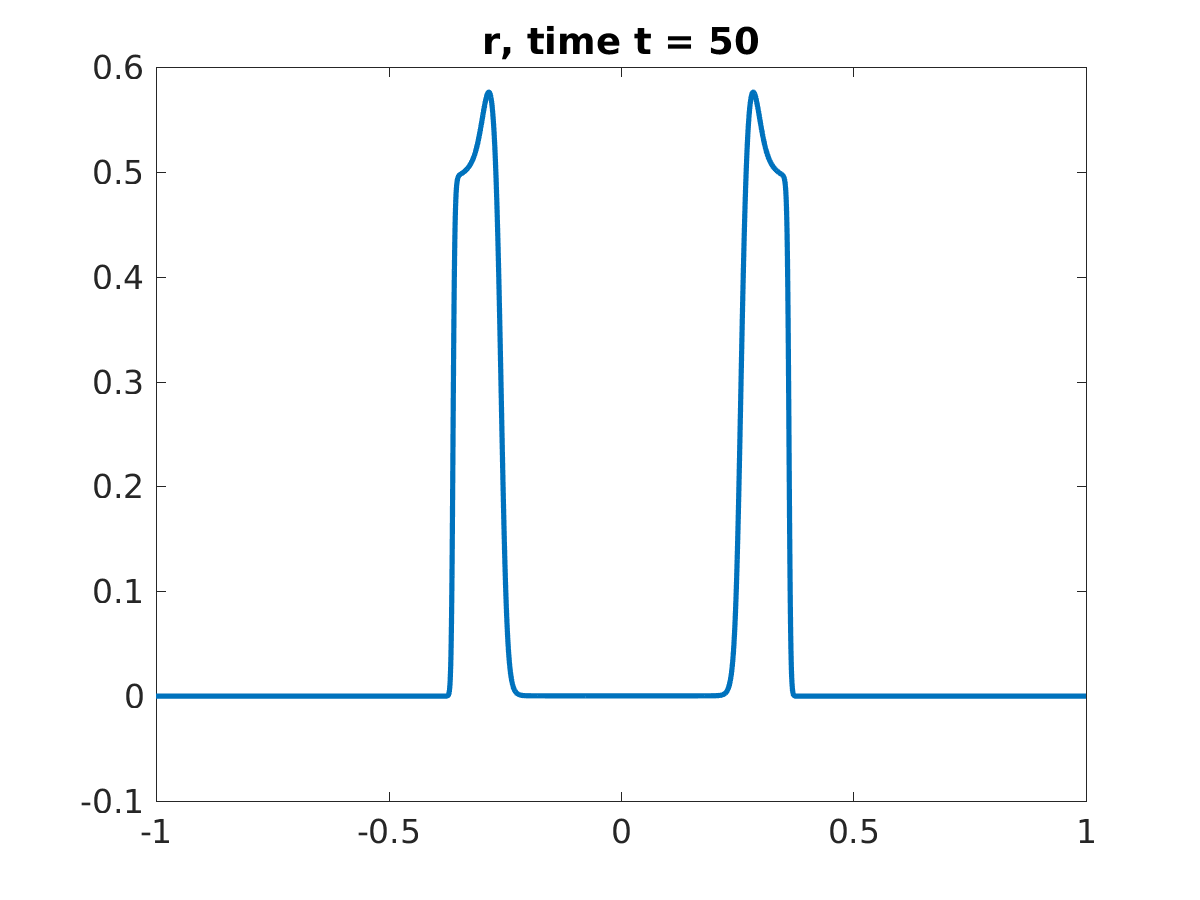}\\
        \includegraphics[width=\textwidth]{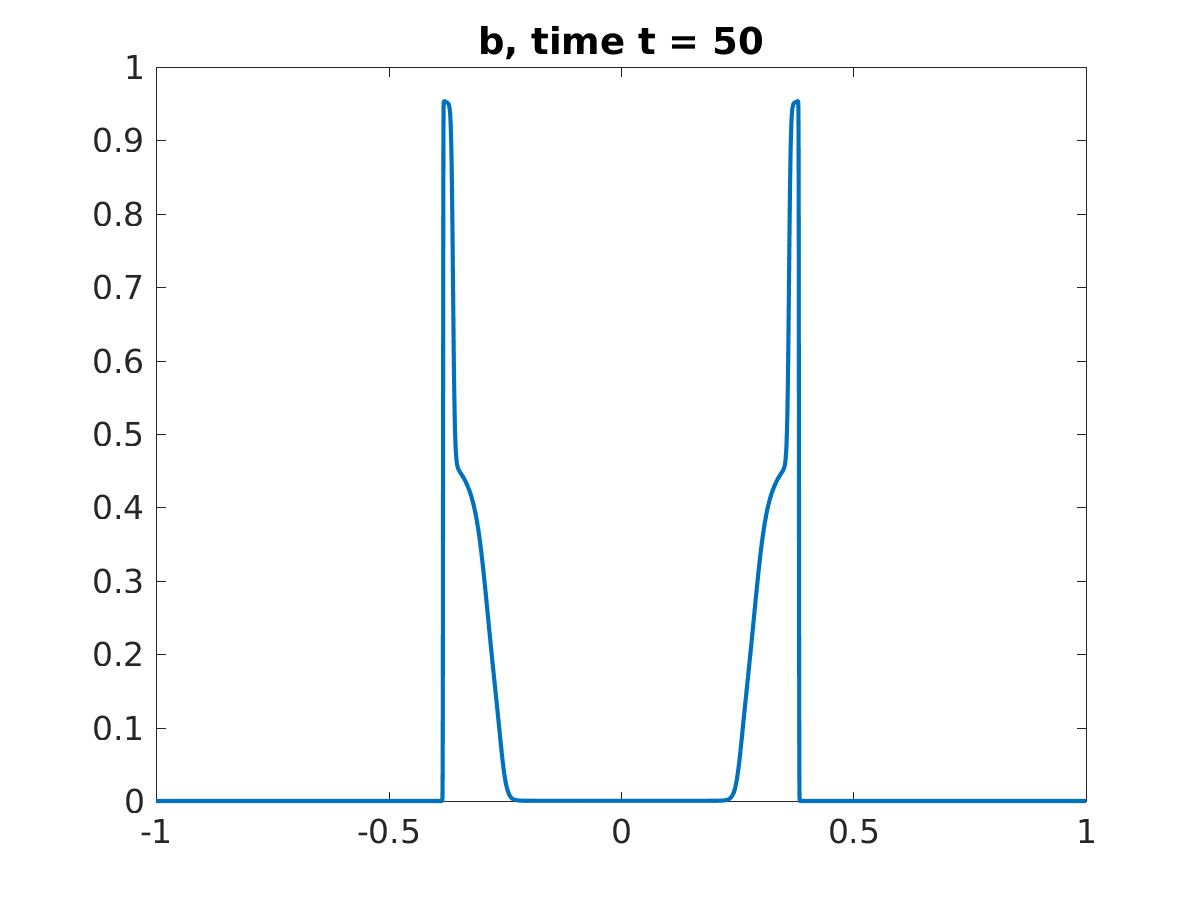}
        \caption{t = 50}
    \end{subfigure}%
    \begin{subfigure}[t]{0.24\textwidth}
        \centering
        \includegraphics[width=\textwidth]{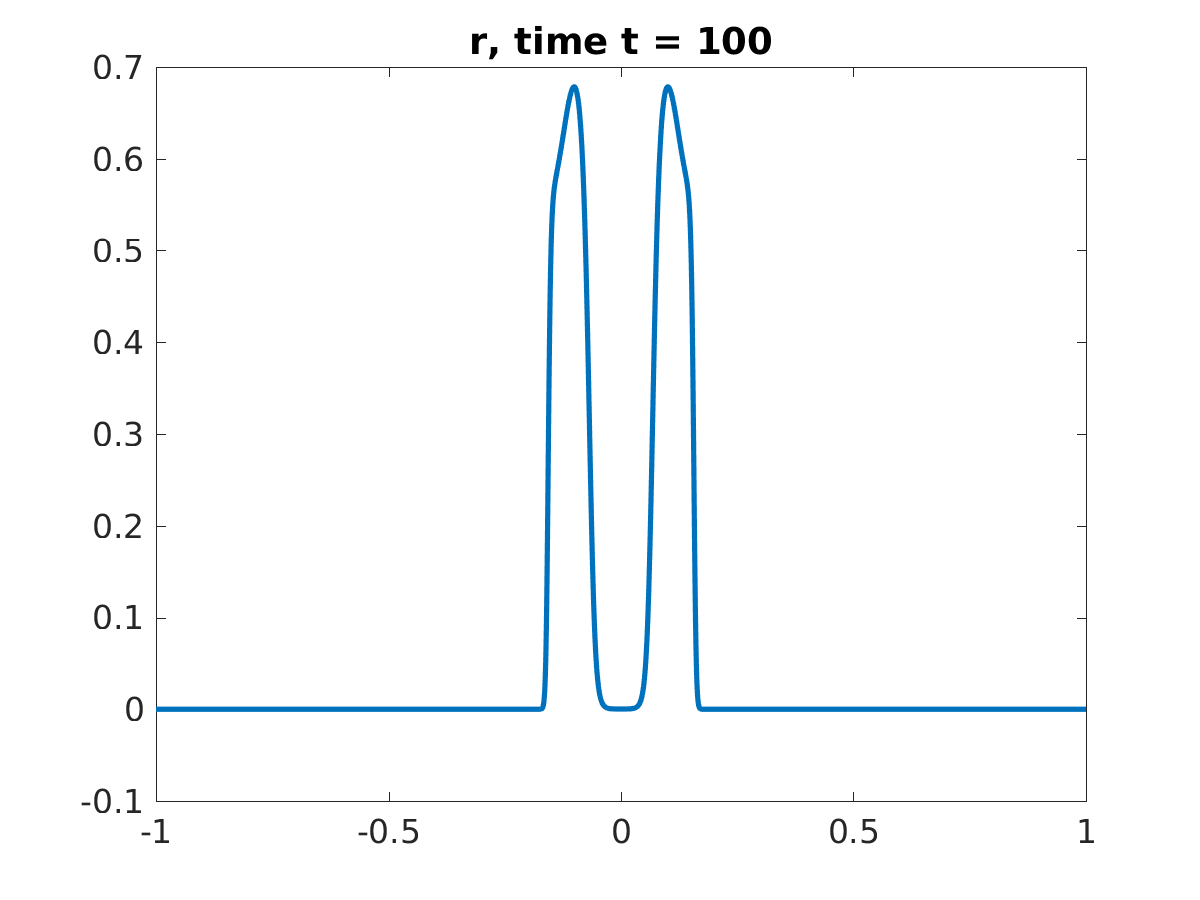}\\
        \includegraphics[width=\textwidth]{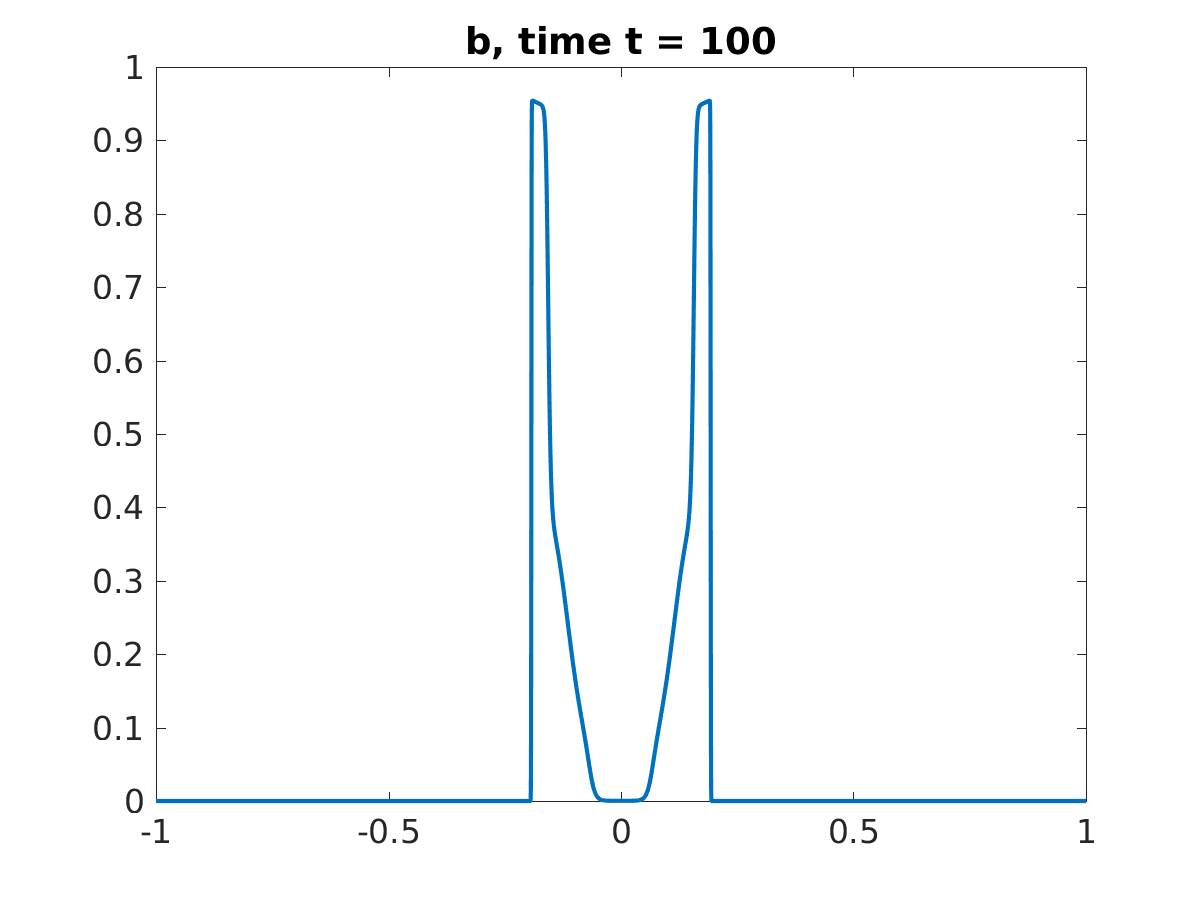}
        \caption{t = 100}
    \end{subfigure}%
 \end{center}
 \caption{The case $c_{11}=-1$ and $c_{22}=-0.5$. A video can be found at \cite{VidURL}}
 \label{fig:mixmeet2}
 \end{figure} 


%

\subsection{Examples in two spatial dimensions}
In two dimensions, we only present one example showing that for large $t$, the solutions of the system seem in fact to converge to the minimizers obtained by the ADMM scheme. To this end, we chose again the case $c_{11}=-1$ and $c_{22}=-1/2$ and $\eps = 0.02$ and used the solution of the PDE at time $t=650$. The results are shown in figure \ref{fig:pdeadmm2d}. Note that our present results only ensure the existence of solution for arbitrary large yet finite time. However, we expect that these can be extended to the case $t\to\infty$ and that we can in fact show convergence to stationary solutions by means of relative entropy methods.
 \begin{figure}
 \begin{center}
  \begin{subfigure}[t]{0.33\textwidth}
        \centering
        \includegraphics[width=\textwidth]{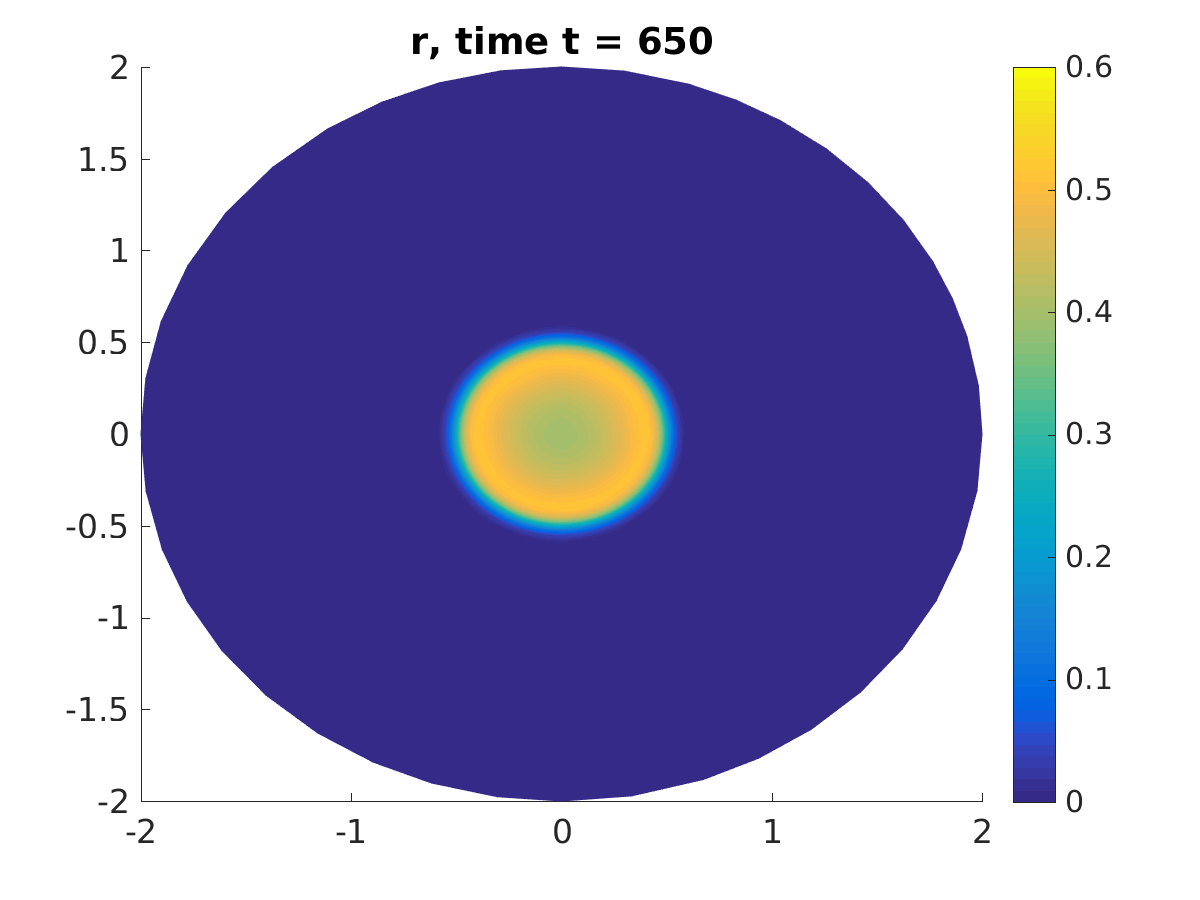}\\
        \includegraphics[width=\textwidth]{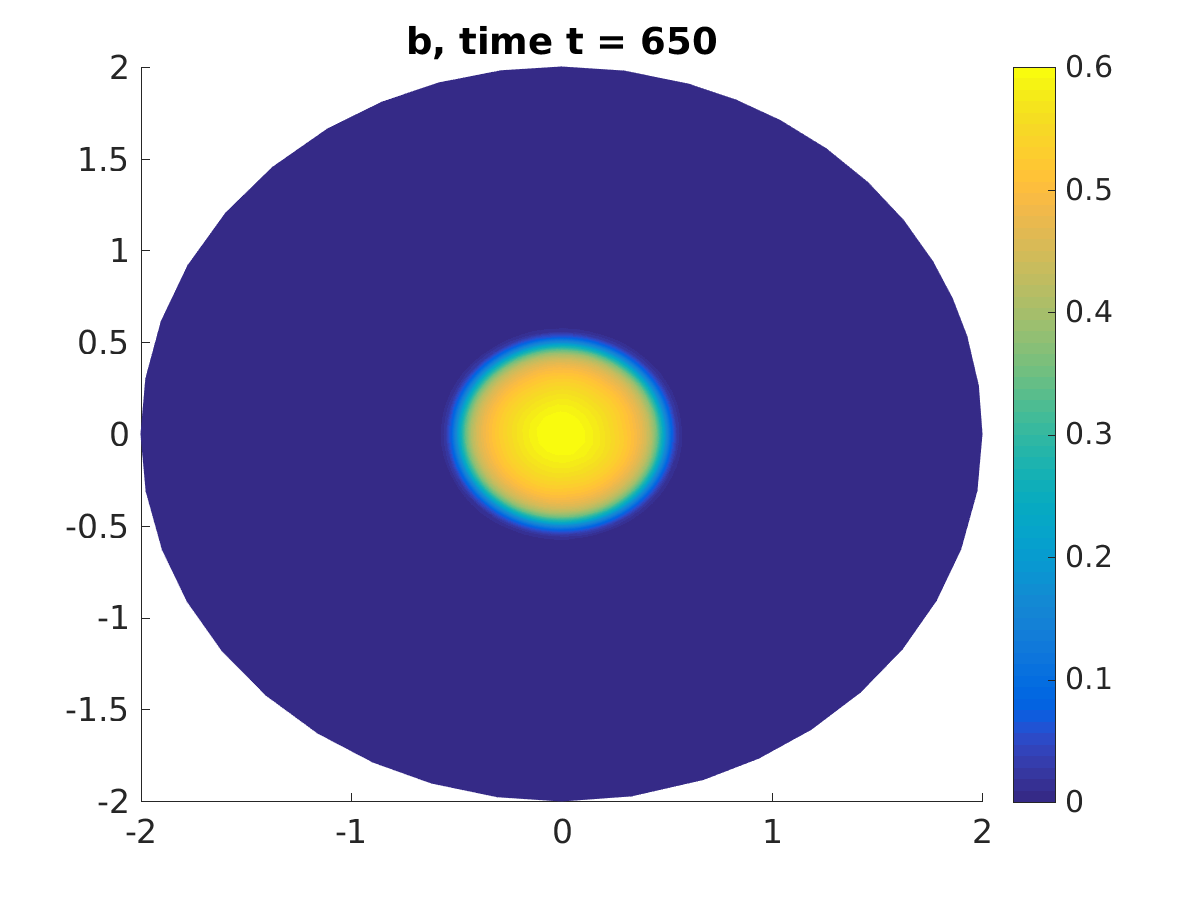}
        \caption{PDE, t = 650}
    \end{subfigure}%
    \begin{subfigure}[t]{0.33\textwidth}
        \centering
        \includegraphics[width=\textwidth]{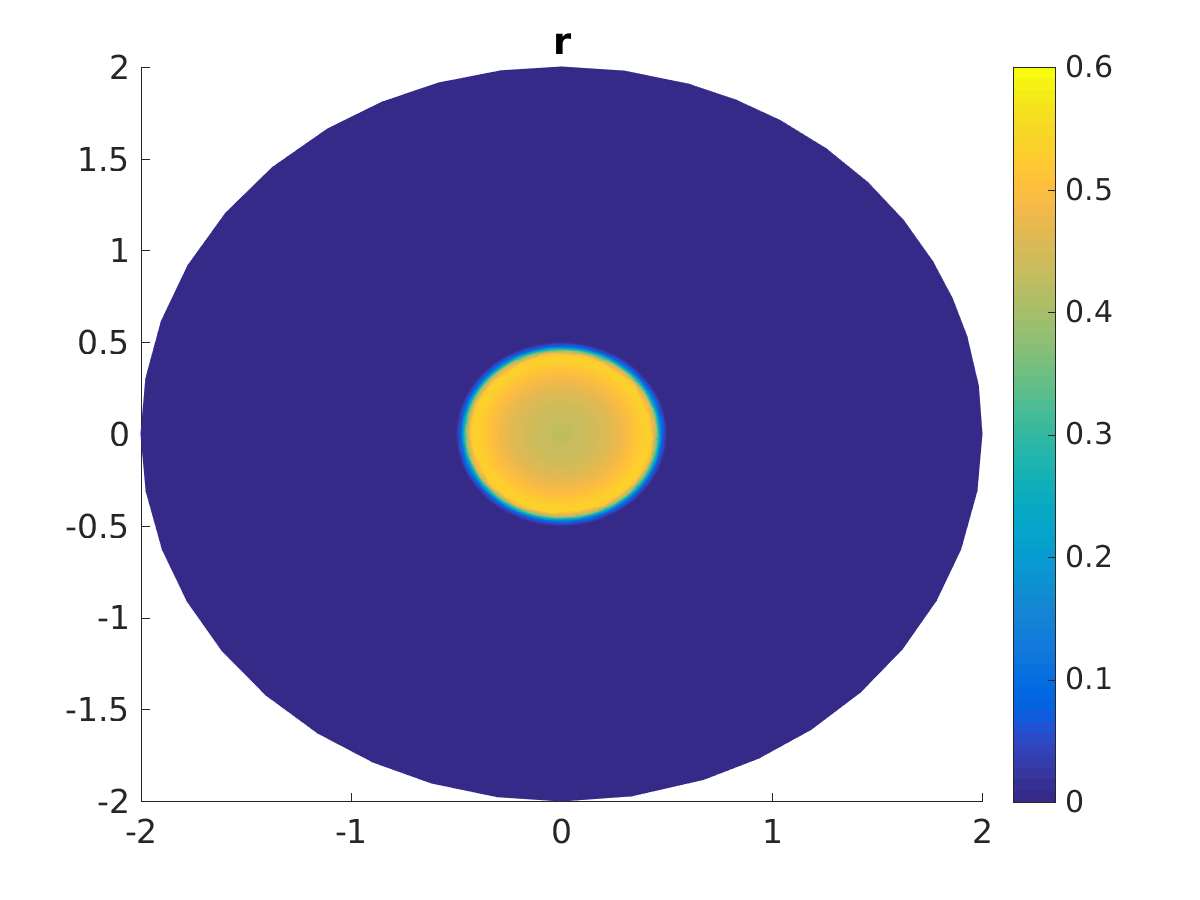}\\
        \includegraphics[width=\textwidth]{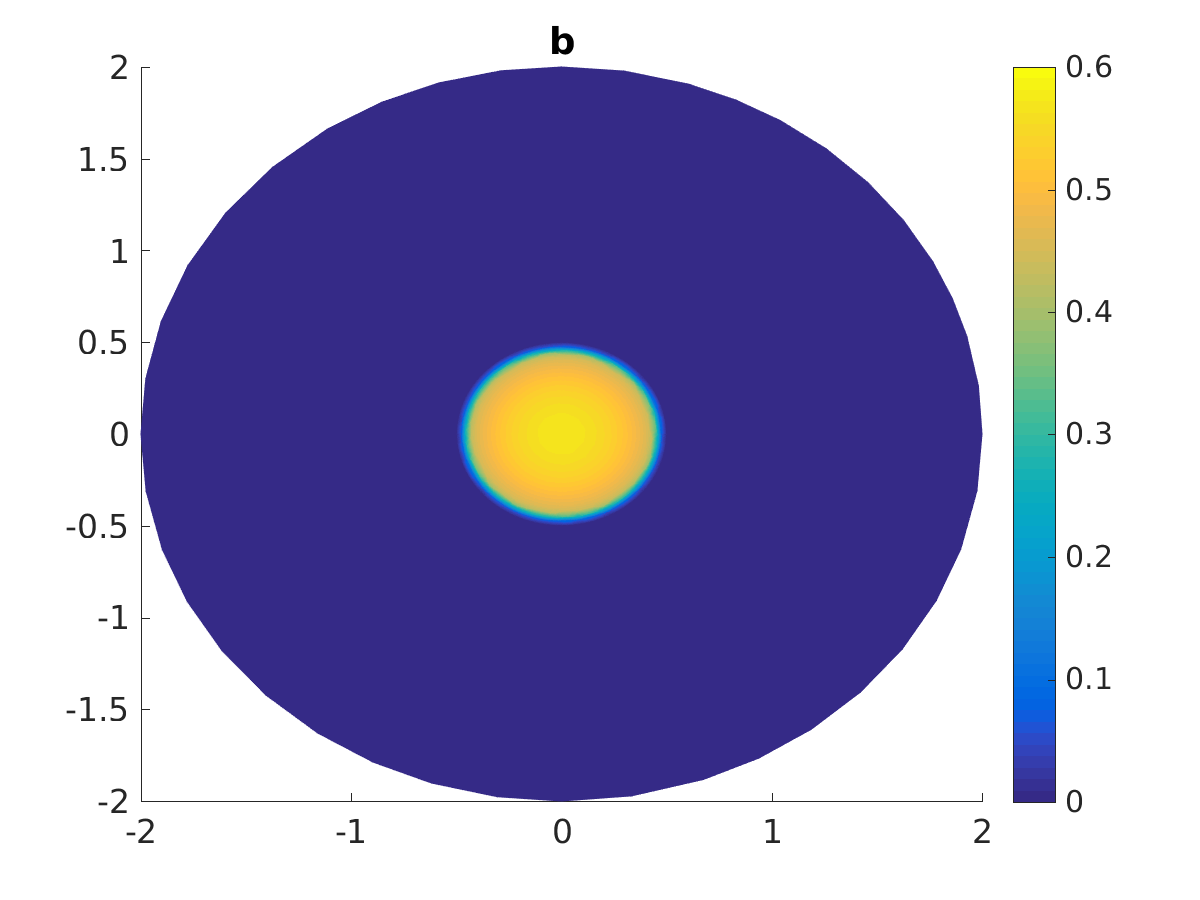}
        \caption{ADMM}
    \end{subfigure}%
    \begin{subfigure}[t]{0.32\textwidth}
        \centering
        \includegraphics[width=\textwidth]{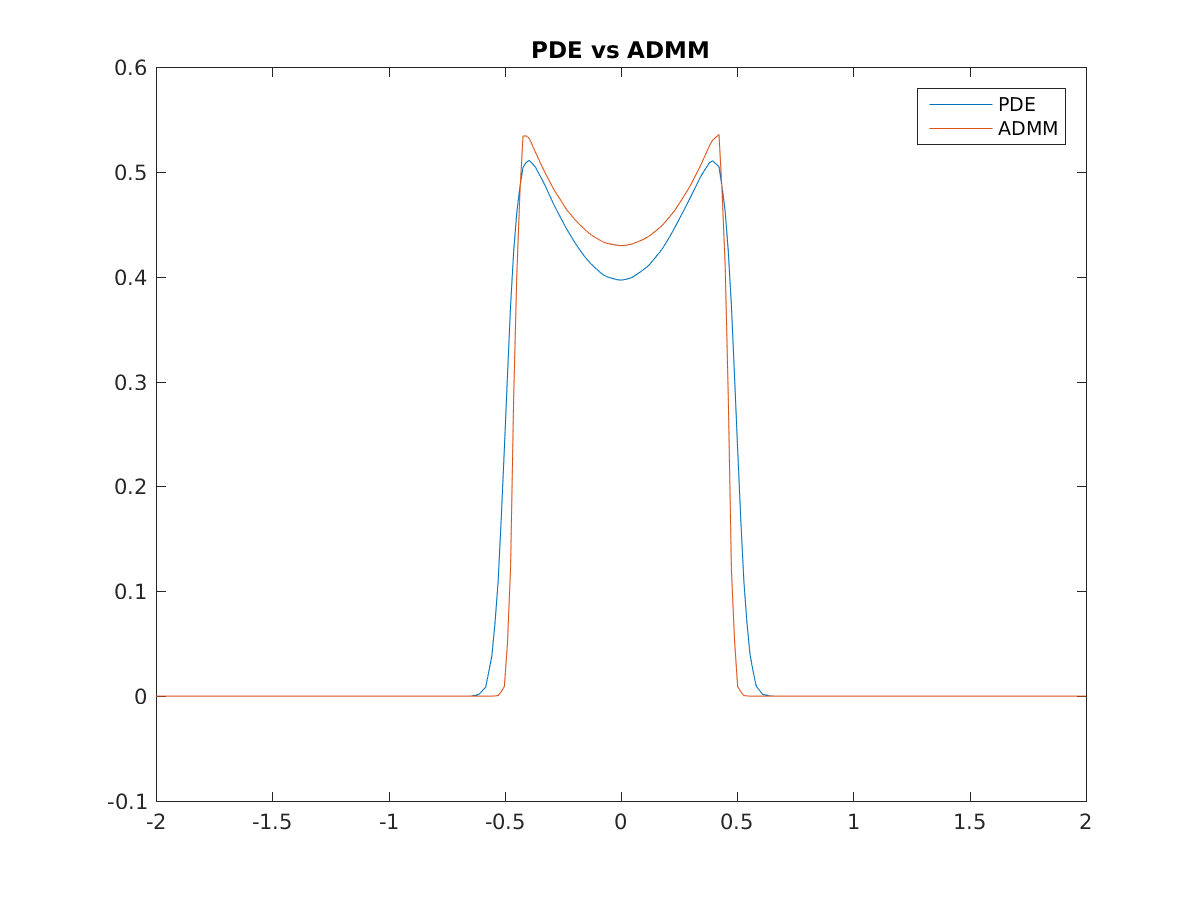}\\
        \includegraphics[width=\textwidth]{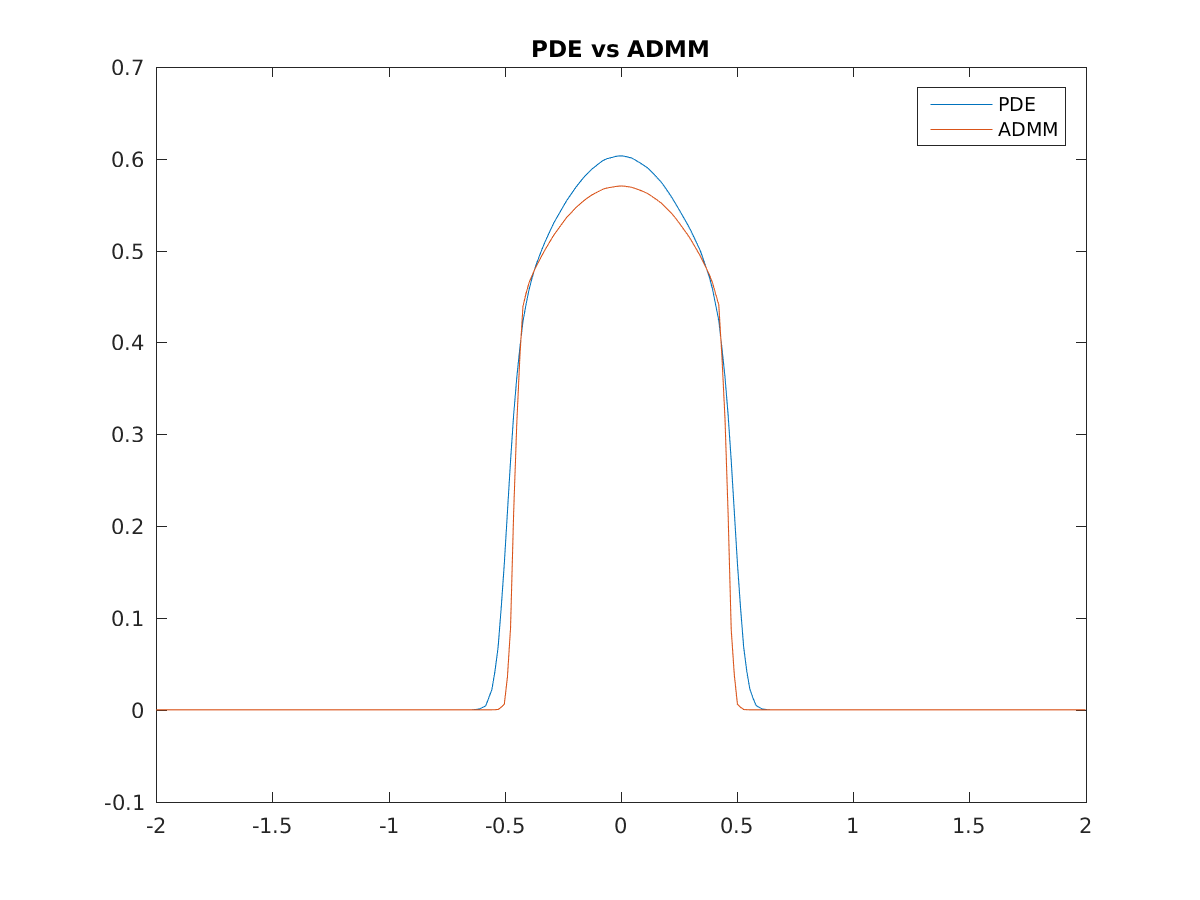}
        \caption{Cut at $y=0$}
    \end{subfigure}%
 \end{center}
 \caption{Comparison between solutions of the PDE at time $t=650$ and minimizers generated by the ADMM scheme for $\eps = 0.02$, $c_{11}=-1$ and $c_{22}=-1/2$. On the right a cut through the axis $y=0$ is shown.}
 \label{fig:pdeadmm2d}
 \end{figure}

\section{Outlook: Coarsening Dynamics}\label{sec:coarsening}
In the following we further investigate the coarsening dynamics of the system \eqref{eq:crossdiffusion1}, \eqref{eq:crossdiffusion2} as $\eps \rightarrow 0$. Let us first of all discuss the case $\eps =0$, which is characterized by a very large set of stationary solutions. Indeed, every pair $(\overline{r},\overline{b})$ with 
\begin{equation}
	\overline{r} (1- \overline{r} - \overline{b}) = \overline{b} (1- \overline{r} - \overline{b}) = 0 
\end{equation}
is a stationary solution, but not necessarily stable under the dynamics. In the case for a single species, stability was characterized from an entropy condition in \cite{burger2008asymptotic}, noticing that the model for $\eps = 0$ consists of nonlinear hyperbolic conservation laws with nonlocal terms. A generalization of the entropy condition to the system case seems out of reach, hence we revisit the single species case ($b=0$ in our notation) and rederive the stability condition from an optimization argument. Intuitively a stationary state is stable if it is a local minimizer of the energy, and using a Lagrange multiplier $\lambda_r$ for the mass constraint we easily arrive at the conditions
\begin{align*}
c_{11} K*r(x) + \lambda_r \geq 0 & \quad \text{if } r(x)=0 \\
c_{11} K*r(x) + \lambda_r \leq 0 & \quad \text{if } r(x)=1. 
\end{align*}
Thus, at an interface between a region with $r=1$ inside and $r=0$ outside the normal derivative of $S=c_{11} K*r$ is nonpositive, which is exactly the entropy condition derived in \cite{dolak2005keller,burger2008asymptotic}. 

In the system case we find by analogous arguments with Lagrange parameters $\lambda_r$ and $\lambda_b$ for the masses 
\begin{align*}
c_{11} K*r(x) - K*b + \lambda_r \geq 0 & \quad \text{if } r(x)=0, \\
c_{22} K*b(x) - K*r + \lambda_b \geq 0 & \quad \text{if } b(x)=0, \\
c_{11} K*r(x) - K*b + \lambda_r \leq 0 & \quad \text{if } \rho(x)=1, \\
c_{22} K*b(x) - K*r + \lambda_b \leq 0 & \quad \text{if } \rho(x)=1.  
\end{align*}
Hence we can effectively derive conditions on the sign of the normal derivatives of 
\begin{align*}
S_r = c_{11}K*r - K*b \text{ and } S_b=c_{22}K*b - K*r
\end{align*}
 on interfaces between regions with $r=1$, $b=1$, or $\rho=0$ (voids). On interfaces between red or blue and void the same analysis as in \cite{burger2008asymptotic} applies, on interfaces between red and blue sign conditions on the derivatives of $S_b$ and $S_r$ need to hold simultaneously.  
Such stable configurations will lead to similar stationary solutions or metastable dynamics in the case of small positive $\eps$.

We finally mention that for the coarsening of interfaces in the metastable case analogous laws as in \cite{dolak2005keller,burger2008asymptotic} can be derived by multiscale asymptotic expansions, whose details we leave to future research. In numerical experiments the coarsening dynamics is well observed, see Figure \ref{fig:mixmeet}, where one obtains local unmixing followed by coarsening. In the case of small self-attraction as shown in Figure \ref{fig:mixmeet2} a different kind of coarsening dynamics is obtained, which is not characterized by $r$ and $b$ attaining values close to $1$, but a mixed phase is coarsening versus the void regions. In this case a further analysis is quite open and an interesting issue for the future.

\section*{Acknowledgements}
JB and MB acknowledge support by ERC via Grant EU FP 7 - ERC Consolidator Grant 615216 LifeInverse. The work of JFP was supported by DFG via Grant 1073/1-2. The authors thank Alessio Brancolini, Benedikt Wirth and Caterina Ida Zeppieri (all WWU M\"unster) and Harald Garcke (University Regensburg) for useful discussions and links to literature. Furthermore, we thank the anonymous referees for many suggestions that substantially improved the paper.

\bibliographystyle{plain}
\bibliography{nonlocal}

\end{document}